\newcommand{\R}{{\mathbb R}}
\newcommand{\ren}{{\mathbb R}^N}
\newcommand{\be}[1]{\begin{equation}\label{#1}}
\newcommand{\ee}{\end{equation}}
\newcommand{\prf}{\par\smallskip\noindent{\sl Proof. \/}}
\newcommand{\finprf}{\unskip\null\hfill$\;\square$\vskip 0.3cm}
\newenvironment{proof}{\prf}{\finprf}
\newtheorem{theorem}{Theorem}[section]
\newtheorem{lemma}{Lemma}[section]
\newtheorem{corollary}[theorem]{Corollary}
\newtheorem{proposition}[theorem]{Proposition}
\newtheorem{remark}[theorem]{Remark}
\newcommand{\ve}{\varepsilon}
\numberwithin{equation}{section}
\newcommand{\nc}{\normalcolor}
\def\qed{\,\unskip\kern 6pt \penalty 500
\raise -2pt\hbox{\vrule \vbox to8pt{\hrule width 6pt
\vfill\hrule}\vrule}\par}
\definecolor{darkblue}{rgb}{0.05, .05, .65}
\definecolor{darkgreen}{rgb}{0.1, .65, .1}
\definecolor{darkred}{rgb}{0.8,0,0}
\newcommand{\nlc}{\normalcolor}
\begin{document}
\title{\textbf{ Anisotropic $p$-Laplacian Evolution \\ of Fast Diffusion type}\\[7mm]}

\author{\Large  Filomena Feo\footnote{Dipartimento di Ingegneria, Universit\`{a} degli Studi di Napoli
\textquotedblleft Parthenope\textquotedblright, Centro Direzionale Isola C4
80143 Napoli, \    E-mail: {\tt filomena.feo@uniparthenope.it}}  \,, \quad
\Large Juan Luis V\'azquez \footnote{Departamento de Matem\'aticas, Universidad Aut\'onoma de Madrid, 28049 Madrid, Spain. \newline
E-mail: {\tt juanluis.vazquez@uam.es}} \,,
\\[8pt] \Large  and
 \ Bruno Volzone \footnote{Dipartimento di Scienze e Tecnologie, Universit\`{a} degli Studi di Napoli ''Parthenope", Centro Direzionale Isola C4, 80143 Napoli, Italy. \    E-mail: {\tt bruno.volzone@uniparthenope.it}}}

\date{\ } 

\maketitle

\begin{abstract}
We study an anisotropic, possibly non-homogeneous version of the evolution $p$-Laplacian equation when fast diffusion holds in all directions.  We develop the basic theory and prove symmetrization results from which we derive $L^1$ to $L^\infty$  estimates. We prove the existence of a self-similar fundamental solution of this  equation in the appropriate exponent range, and uniqueness in a smaller range. We also obtain the asymptotic behaviour of finite mass solutions in terms of the self-similar solution. Positivity, decay rates as well as other  properties of the solutions are derived.  The combination of self-similarity and anisotropy is not common in the related literature. It is however essential in our analysis and creates mathematical difficulties that are solved for fast diffusions.

\end{abstract}

\medskip

\setcounter{page}{1}

\

\numberwithin{equation}{section}

\newpage

\section{Introduction}
This paper focusses on the study of the existence of  self-similar fundamental solutions  to the following ``anisotropic  $p$-Laplacian equation''(APLE for short),
 \begin{equation}\label{APL}
 u_t=\sum_{i=1}^N(|u_{x_i}|^{p_i-2}u_{x_i})_{x_i}\quad \mbox{posed in
 } \ \quad Q:=\mathbb{R}^N\times(0,+\infty),
 \end{equation}
 and their role to describe the long time behaviour of general classes of
finite-mass of the initial-value problem. Fundamental solutions are solutions of the equation  for all times $t>0$ that take a point mass (\emph{i.\,e.,} a Dirac delta) as initial data.
 In the process, we construct a theory of existence and uniqueness for initial data in $L^q$ spaces, $1\le q<+\infty$, and prove important results
on symmetrization, boundedness, barriers and positivity.

 We are specially interested in the presence of different growth exponents $p_i$. We take $N\geq2$ and $p_i>1$  for $i=1,...,N$. Therefore, this
equation is an anisotropic relative of the standard isotropic $p$-Laplacian equation
\begin{equation}\label{PLE}
u_t=\Delta_p u:=\sum_{i=1}^N (|\nabla u|^{p-2}u_{x_i})_{x_i},
\end{equation}
that has been extensively studied in the literature as the standard model
for gradient dependent nonlinear diffusion equation, with possibly degenerate or singular character.  Though most the attention has been given to the elliptic counterpart, $-\Delta_p u =f$, the parabolic case is also treated, see e.g. the well-known \cite{DiB93, Li69, Lind19} among the many references.

Even in the case where all the exponents $p_i$ in \eqref{APL} are the same, we obtain an alternative  version $u_t= L_{p,h} (u)$ with a homogeneous but  non-isotropic spatial operator,
 \begin{equation}\label{APLiso}
 L_{p,h} (u):=\sum_{i=1}^N (|u_{x_i}|^{p-2}u_{x_i})_{x_i},
 \end{equation}
which appears quite early in the literature, cf. \cite{Li69, Vi62, Vi63},
see also \cite{BGM96}.
This operator has been sometimes named ``pseudo $p$-Laplacian operator'',
\cite{BellKa2004}, and more recently , ``orthotropic $p$-Laplacian operator'', see \cite{BouBra,BBLV}, due to the invariance of $L_{p,h}$ with respect to the dihedral group for $N=2$. This will be our preferred denomination. The parabolic version appears in \cite{Rav67, Rav70}. In the general studies of nonlinear diffusion the case where the  exponents $p_i$ are different falls into the category of ``structure conditions with non-standard growth''.  The anisotropic equation was also studied in a number of references like \cite{LiZha01, Sango03}. Actually, a more general doubly nonlinear model was introduced in those references, see also \cite{AntCh08}. Very general structure conditions are considered by various authors
like \cite{Tolks84}, specially in elliptic problems. Our interest here differs from those works.


\medskip

\noindent {\bf The setting}. We consider solutions to the Cauchy problem for equation \eqref{APL} with nonnegative initial data
\begin{equation}\label{IC}
u(x,0)=u_0(x), \quad x\in \R^{N}.
\end{equation}
We assume that  $u_0\in L^1(\mathbb{R}^N)$, $u_0\ge 0$, and put $M:=\int_{\mathbb{R}^N} u_0(x)\,dx$, the so-called total mass.
The reader is here reminded that the strong qualitative and quantitative separation
between the two exponent ranges, $p>2$ and $p<2$ is a key feature of the isotropic $p$-Laplacian equation \eqref{PLE}. We recall that in the isotropic equation the range $p>2$ is called the slow gradient-diffusion case (with finite speed
of propagation and free boundaries), while the range $1<p<2$ is called the fast
gradient-diffusion case (with infinite speed of propagation), cf. \cite{DiB93}, and Section 11 of \cite{VazSmooth}.

\noindent $\bullet$ In this paper we will focus on the case where \sl fast diffusion holds in all directions\rm, i.e.,
\begin{equation}\label{H1}
1<p_i<2 \qquad \mbox{for all } \ i = 1,\cdots,N.  \tag{H1}
\end{equation}
We recall that in the orthotropic  fast diffusion equation (i.e., equation \eqref{APL} with $p_1 = p_2 =\cdots=p_N = p < 2$, hence $p$-homogeneous), there is a critical exponent,
\begin{equation}\label{p crit}
p_c(N):=\frac{2N}{N+1}
\end{equation}
such that $p>p_c$ is a necessary and sufficient condition for the existence of fundamental
solutions, cf. \cite{VazSmooth}. Note that $1<p_c(N)<2$ for $N\ge 2$.

\noindent $\bullet$ Moreover, we will always assume the  condition
\begin{equation}\label{p_isum}
\sum_{i=1}^{N}\frac{1}{p_i}<\frac{N+1}{2},\tag{H2}
\end{equation}
that is crucial in what follows. We we may also write it in terms of $p_c$  as: $\bar p>p_c$, where $\bar p$ is the inverse-average
\begin{equation}\label{p bar}
\frac{1}{\overline{p}}=\frac{1}{N}\sum_{i=1}^{N}\frac{1}{p_i}.
\end{equation}

We point out that (H2)  excludes the presence of (many) small exponents $1<p_i< p_c$ close to 1.
On the contrary, condition (H2) would  obviously be in force under the assumptions of slow diffusion in all directions:  $2\leq p_i<+\infty$ for all $i = 1,\cdots,N$ ( a situation we will not consider here).  However,
 in the fast diffusion  range  we have to impose it, otherwise the results we expect to obtain would be false.

\noindent $\bullet$ Finally, it is well known  in the literature on operators with non-standard growth that some control on the difference of diffusivity exponents is needed, see for instance \cite{BaCoMin, BDM, Marc20}. Here, we will only need the condition
\begin{equation}\label{H3}
p_i\leq \frac{N+1}{N}\bar p \tag{H3}
\end{equation}
(see Section \ref{sec.sss}). It is remarkable that  this  condition is automatically satisfied  if (H1) and (H2) are in force.

Under these conditions on the exponents, we develop a theory of existence, regularity, symmetrization, and upper and lower estimates for the Cauchy problem. We prove the existence of a self-similar solution starting from a Dirac mass, so-called fundamental solution or Barenblatt solution. Moreover, in  the particular orthotropic case $p_{i}=p$ for all $i$, thanks to extra regularity results that we derive, it is possible to prove uniqueness of the fundamental solution, and the theory goes on to show the asymptotic behaviour of all nonnegative finite mass solutions in the sense  that they are attracted by the corresponding Barenblatt solution with same mass as $t\to\infty$. This set of results shows that the ideas proposed by Barenblatt in his classical work \cite{Barbk96}  are valid for our
equation too.

\medskip

\noindent {\bf Outline of the paper by sections.} Here is a detailed summary of the contents. In Section \ref{sec.sss} we examine the form of the possible self-similar solutions, the a priori conditions on the exponents, and we also introduce the renormalized equation and its elliptic counterpart. The role of assumptions (H1), (H2) and (H3) is examined.

In Section \ref{sec.basic} we review the basic existence and uniqueness theory for the Cauchy Problem using the theories of monotone and accretive
operators in $L^q$ spaces. This general theory is valid in the whole range $p_i>1$, with no further restriction on the exponents.

In Section \ref{sec.symm} we develop the technique of Schwarz symmetrization for our anisotropic equation and we prove sharp comparison results by using the concept of mass concentration. This is an important topic in itself with a huge literature, specially when anisotropy is mild, see \cite{Bandle, Bear2019, Tal76}. The passage from anisotropic to isotropic is based on a sharp elliptic result by Cianchi \cite{Cianchi} that we develop in this setting using mass comparison, a strong tool used in some of our previous papers.  The topic of symmetrization has independent interest, and the theory and results are proved for all $p_i>1$ under assumption (H2).

The theory developed up to this point (including symmetrization) is used in Section \ref{sec.bdd} to  obtain a uniform $L^\infty$ bound for solutions with $L^1$ data, the so-called $L^1$-$L^\infty$ effect. Theorem \ref{L1LI}  is a key estimate in what follows.

We begin at this moment the construction of the self-similar fundamental solution under conditions (H1) and (H2). In a preparatory Section \ref{sec.upp} we construct the sharp anisotropic upper barrier for the solutions
of our problem, yet another key tool we need. The theory is now ready to tackle the construction of the special solution. The existence result, Theorem    \ref{thm.fundamental solution}, is maybe the main result of the paper. In Section \ref{sec.lbarr} we construct the lower barrier  and prove global positivity, an important additional information on the obtained
solution.

The very delicate question of uniqueness of the fundamental solutions is solved only for the orthotropic case, $p_i=p$, in Section \ref{sec.uniq}, and as consequence we establish the asymptotic behaviour of general solutions of the Cauchy problem in that case, see Section \ref{sec.asymp}. Both questions remain open for the anisotropic non-orthotropic equations.

As supplementary information, we discuss in Section \ref{sec.aniso}  the necessary control on the anisotropy for the theory to work. We devote Section \ref{sec.dne}  to introduce the study of self-similarity for Anisotropic Doubly Nonlinear Equations. Finally, we add a section on comments and open problems.

\medskip

\noindent {\bf Some related works}. This work follows the study of self-similarity for the anisotropic Porous Medium Equation (APME) in the fast diffusion range done by the authors in \cite{FVV2020}, where previous references to the literature are mentioned. Though it is well-known that the PME and the PLE are closely related as models of nonlinear diffusion of degenerate type, see for instance \cite{VazCIME}, the theories differ in many important details, hence the interest on this investigation.

In a  recent paper, Ciani and Vespri \cite{CV2020} study the existence of
Barenblatt solutions for the same Anisotropic $p$-Laplace Equation \eqref{APL} posed also in the whole space, but they consider the slow diffusion
case in all directions, i.e., $p_ i>2$ for all $i$. They exploit the property of finite propagation that holds in that exponent range. Uniqueness and asymptotic behaviour are not discussed. Their paper and ours contain parallel, non-overlapping  information.
Let us point out that the existence of fundamental solutions for anisotropic elliptic equations is a different issue, it has been studied by several authors like \cite{CirVet2015}.



\section{Self-similar solutions}\label{sec.sss}

We start our study by taking a closer look at the possible class of self-similar solutions.
This section follows closely the arguments of \cite{FVV2020} for the anisotropic Porous Medium Equation, but they lead to a quite different algebra, hence a careful analysis is needed. The common type of self-similar solutions of equation \eqref{APL} takes into account the anisotropy in the form
\begin{equation}\label{sss}
B(x,t)=t^{-\alpha}F(t^{-a_1}x_1,..,t^{-a_N}x_N),
\end{equation}
with constants $\alpha>0$, $a_1,..,a_n\ge 0$ to be chosen below by algebraic considerations. Indeed, if we substitute this formula into equation \eqref{APL} and write $y=(y_1,...,y_{N})$ and $y_i=x_i \,t^{-a_i}$,
equation \eqref{APL} becomes
$$-t^{-\alpha-1}\left[\alpha F(y)+
\sum_{i=1}^{N} a_iy_i\,F_{y_i}
\right]=\sum_{i=1}^{N}t^{-[\alpha(p_i-1)+p_ia_i]}\left(|F_{y_i}|^{p_i-2}F_{y_i}\right)_{y_i}.
$$
We see that time is eliminated as a factor in the resulting equation on the condition that:
\begin{equation}\label{ab}
\alpha(p_i-1)+p_ia_i=\alpha+1 \quad \mbox{ for all } i=1,2,\cdots,  N.
\end{equation}
We also look for integrable solutions that will enjoy the mass conservation property, and this
implies that \ $\alpha=\sum_{i=1}^{N}a_i$.
Imposing both conditions, and putting $a_i=\sigma_i \alpha,$ we get unique values for $\alpha$ and $\sigma_i$:
\begin{equation}\label{alfa}
\alpha=\frac{N}{N\overline{p}-2N+\overline{p}}
\end{equation}
and
\begin{equation}\label{ai} \ 
 \sigma_i= \frac{1}{p_i}\frac{(N+1)\bar p}{N}-1, \quad \mbox{\rm i.e., } \ \sigma_i-\frac1{N}= \frac{(N+1)}{N}\frac{(\overline{p}-p_i)}{p_i},
\end{equation}
so that \ $\sum_{i=1}^{N}\sigma_i=1. $ This is a delicate calculation
that produces the special value $\overline{p}$.

Observe that Condition (H2) is required to ensure that $\alpha>0$,  so that the self-similar solution will decay in time in maximum value like a power of time. This is a crucial condition for the self-similar solution to exist and play its role as asymptotic attractor, since the existence theory we present contains the maximum principle, hence the sup norm of
the constructed solutions cannot increase in time.

As for the $\sigma_i $ exponents that control the rate of spatial spread in each coordinate direction, we know that \ $\sum_{i=1}^{N}\sigma_i=1$, and in particular $\sigma_i=1/N$ in the homogeneous case. Condition
(H3) on the $p_i$  ensures that $\sigma_i> 0$. This means that the self-similar solution expands  as time passes (or at least, it does not contract), along any of the coordinate directions.

To fix ideas, we present in Section \ref{sec.aniso} a graphic analysis of
assumptions (H1), (H2), (H3) for general exponents $p_i>1$ in dimension $N=2$. We also compare this analysis with the predictions made in \cite{FVV2020} for the APME. \nc

\medskip

\noindent $\bullet$ With these choices,  the {\sl profile function} $F(y)$ must satisfy the following nonlinear anisotropic stationary equation in
$\mathbb{R}^N$:
\begin{equation}\label{StatEq}
\sum_{i=1}^{N}\left[\left(|F_{y_i}|^{p_i-2}F_{y_i}\right)_{y_i}+\alpha\sigma_i\left( y_i F\right)_{y_i}\right]=0.
\end{equation}
Conservation of mass must also hold : $\int B(x,t)\, dx =\int F(y)\, dy=M<\infty $ \ {\rm for all} \ $t>0.$
It is our purpose to prove that there exists a suitable solution of this elliptic equation, which is the anisotropic version of the equation of the Barenblatt profiles in the standard $p$-Laplacian, cf. \cite{VazSmooth}.

\medskip

 \noindent {\bf Examples.} {\bf 1) The isotropic case.} It is well-known that the source-type self-similar solution is indeed explicit in the isotropic case $u_t=\sum_{i=1}^N (| \nabla u|^{p-2}u_{x_i})_{x_i}$. Of course, for $p=2$ we obtain the Gaussian kernel of the heat equation: $ F(y)=(4\pi)^{-N/2}e^{-\frac{|y|^2}{4}}$. In the nonlinear cases we get
 two different but related formulas.

  $\bullet$ For $p_c<p<2$
 $$
 F(y)=\left( C_0+\frac{2-p}{p} \lambda^{-\frac{1}{p-1}}\,|y|^{\frac{p}{p-1}}\right)^{-\frac{p-1}{2-p}},
 $$
\ $\bullet$  when $p>2$ we get
 $$
 F(y)=\left( C_0- \frac{p-2}{p} \lambda^{-\frac{1}{p-1}} |y|^{\frac{p}{p-1}}\right)_+^{\frac{p-1}{p-2}},
 $$
with $\lambda=N(p-2)+p$ and  $C_0>0$ is an arbitrary  constant such that can be determined in terms of  the initial mass $M$.  They are called the Barenblatt solutions \cite{Barbk82}.

For $1<p\le 2$ the profile $F$ is everywhere positive, moreover for $p_c<p<2$ the profile $F$ belongs to $L^1(\mathbb{R}^N)$ and has a decay with a characteristic power rate. On the contrary, for $p>2$ the profile $F$ has compact support and exhibits a free boundary. Free boundaries are important objects for slow diffusion but they will appear in this paper only in passing.

\medskip

 {\bf 2) The orthotropic case.}  We have found a rather similar explicit formula for $F$ when $p_i=p$ for all $i$, so that $\overline{p}=p$. In that case we have

 $\bullet$ if $p_c<p<2$
\begin{equation}\label{Fp<2}
 F(y)=\left( C_0+ \frac{2-p}{p}\lambda^{-1/(p-1)}\sum_{i=1}^N\,|y_i|^{\frac{p}{p-1}}\right)^{-\frac{p-1}{2-p}} 
 ,
\end{equation}
with $C_0>0$ and $\lambda=N(p-2)+p$ as above. It is a solution to \eqref{StatEq}, because it solves
$$
|F_{y_i}|^{p-2}F_{y_i}+\frac{\alpha}{N} y_i F=0\quad \text{ in } \mathbb{R}^N\quad \text{ for all }i.
$$
Moreover, condition $p_c<p$ guaranties that $F\in L^1(\mathbb{R}^N)$.  Note that the constant $C_0>0$ is arbitrary and allows fixing the mass $M>0$ at will.

\medskip

$\bullet$ As a complement we state the case $p>2$
\begin{equation}\label{Fp>2}
 F(y)=\left( C_0-\frac{p-2}{p}\lambda^{-1/(p-1)}\sum_{i=1}^N\,|y_i|^{\frac{p}{p-1}}\right)_+^{ \frac{p-1}{p-2}}
 ,
\end{equation}
with $C_0>0$ and same $\lambda$. To our best knowledge, the explicit formulas \eqref{Fp<2} and \eqref{Fp>2} are new, as well as the formulas for $V$ below.

In order to fix the mass of $F$ given by \eqref{Fp<2} or \eqref{Fp>2} we
use transformation ${\mathcal T}_k [F(y)]=k F(k^{\frac{2-p}{p}}y)$ that
changes solutions into new solutions of the stationary equation \eqref{StatEq} with $p_i=p$ and changes the mass according to the rule \ $\int {\mathcal T}_k [F(y)]dy=k^{N+1-\frac{2N}{p}} \,\int F(z)\,dz$.

\medskip

 {\bf 3)} Putting $C_0=0$ in \eqref{Fp<2} we get for $p_c<p<2$
 the following parabolic solution
 $$
 V(x,t)=k_1\,t^{\frac1{2-p}}\left(\sum_{i=1}^N\,|x_i|^{\frac{p}{p-1}}\right)^{-\frac{p-1}{2-p}} \quad
 \text{for suitable $k_1>0$}.
 $$
 This is called a {\sl very singular solution}, since it contains a singularity with infinite integral at $x=0$. A much more singular solution  can be obtained by separating the variables
 $$
 V(x,t)=k_2\,t^{\frac1{2-p}}\,\left(\sum_{i=1}^N\,|x_i|^{{\color{magenta}-}\frac{p}{2-p}}\right) \quad
 \text{for suitable $k_2>0$}.
 $$

 \medskip

 {\bf 4)} We will not get any explicit formula for $F$ in the general anisotropic case, but we will have existence of self-similar solutions and suitable estimates, in particular decay.

 \nc
\subsection{Self-similar variables}
In several instances in the sequel, it will be convenient to pass to self-similar variables,
by zooming the original solution according to the self-similar exponents \eqref{alfa}-\eqref{ai}. More precisely, the change is done via the formulas
 \begin{equation}\label{NewVariables}
v(y,\tau)=(t+t_0)^\alpha u(x,t),\quad \tau=\log (t+t_0),\quad y_i=x_i(t+t_0)^{-\sigma_i\alpha} \quad i=1,..,N,
\end{equation}
with $\alpha$ and $\sigma_i$ as before.
We recall that all of these exponents are positive. There is a free time parameter $t_0\ge 0$ (a time shift).

\begin{lemma}\label{Lem1}
If $u(x,t)$ is a solution (resp. super-solution, sub-solution) of \eqref{APL}, then $v(y,\tau)$ is a solution (resp. super-solution, sub-solution)
of
\begin{equation}\label{APLs}
v_\tau=\sum_{i=1}^N\left[(|v_{y_i}|^{p_i-2}v_{y_i})_{y_i}+\alpha \sigma_i \left(\,y_i\,v\right)_{y_i}\right] \quad \mathbb{R}^N\times(\tau_0,+\infty).
\end{equation}
\end{lemma}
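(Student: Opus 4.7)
The plan is a direct change-of-variables computation, whose whole content is that the algebraic conditions \eqref{ab} together with $\sum_i\sigma_i=1$ (imposed earlier in \S\ref{sec.sss}) are \emph{precisely} what is needed to turn \eqref{APL} into \eqref{APLs}.

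First, set $s:=t+t_0$ so that $\tau=\log s$, $y_i=x_i s^{-\sigma_i\alpha}$ and $u(x,t)=s^{-\alpha} v(y,\tau)$. Applying the chain rule, using $\tau_t=1/s$ and $\partial y_j/\partial t=-\sigma_j\alpha\, y_j/s$, I would compute
\[
u_t=s^{-\alpha-1}\Bigl[-\alpha\,v+v_\tau-\alpha\sum_{j=1}^N\sigma_j y_j\,v_{y_j}\Bigr],
\qquad
u_{x_i}=s^{-\alpha(1+\sigma_i)}\,v_{y_i}.
\]
Raising to the diffusion power and differentiating once more in $x_i$ (which brings an extra factor $s^{-\sigma_i\alpha}$) gives
\[
\bigl(|u_{x_i}|^{p_i-2}u_{x_i}\bigr)_{x_i}=s^{-\alpha[(p_i-1)(1+\sigma_i)+\sigma_i]}\,\bigl(|v_{y_i}|^{p_i-2}v_{y_i}\bigr)_{y_i}.
\]

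Second, I would simplify the exponent in the last display: expanding,
\[
\alpha\bigl[(p_i-1)(1+\sigma_i)+\sigma_i\bigr]=\alpha(p_i-1)+\alpha\sigma_i p_i=\alpha+1,
\]
where the last equality is exactly relation \eqref{ab} with $a_i=\sigma_i\alpha$. Hence every one of the $N$ right-hand side terms and the left-hand side of \eqref{APL} all carry the \emph{same} time factor $s^{-\alpha-1}$, which factors out. Cancelling it yields
\[
v_\tau-\alpha v-\alpha\sum_{j=1}^N\sigma_j y_j\,v_{y_j}=\sum_{i=1}^N\bigl(|v_{y_i}|^{p_i-2}v_{y_i}\bigr)_{y_i}.
\]

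Third, I would use $\sum_i\sigma_i=1$ (the mass-conservation identity established in \eqref{ai}) to rewrite the drift term in divergence form: since $\alpha v=\alpha\sum_i\sigma_i v$,
\[
\alpha v+\alpha\sum_{i=1}^N\sigma_i y_i\,v_{y_i}=\alpha\sum_{i=1}^N\sigma_i\bigl(v+y_i v_{y_i}\bigr)=\alpha\sum_{i=1}^N\sigma_i (y_i v)_{y_i},
\]
which transforms the equation into \eqref{APLs} exactly. Finally, for super- and sub-solutions the identical manipulation applies with the equality sign replaced by $\ge$ or $\le$; since the only factor that gets cancelled, $s^{-\alpha-1}$, is strictly positive, the sense of the inequality is preserved. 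There is no genuine obstacle here: the only subtlety is bookkeeping the exponents so as to recognise that \eqref{ab} makes the time scaling match and that $\sum_i\sigma_i=1$ converts the inhomogeneous drift into the divergence form displayed in \eqref{APLs}.
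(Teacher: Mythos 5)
Your proof is correct and follows essentially the same route as the paper, which treats Lemma \ref{Lem1} as an immediate consequence of the scaling computation carried out at the start of Section \ref{sec.sss} (the derivation of \eqref{ab} and $\sum_i\sigma_i=1$); you simply spell out the chain-rule bookkeeping and the passage from the inhomogeneous drift to divergence form, and correctly observe that the positive factor $s^{-\alpha-1}$ preserves the sign of inequalities for super- and sub-solutions.
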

This equation will be a key tool in our study. Note that the rescaled equation does not change with the time-shift  $t_0$ \nlc but the initial value in the new time does, $\tau_0 = \log(t_0)$. Thus, if $t_0 = 1$
then $\tau_0=0$.  If $t_0 = 0$ then $\tau_0=-\infty$ and the $v$ equation is defined for $\tau\in \mathbb{R}$.

We stress that this change of variables preserves the $L^1$ norm. The mass of the $v$ solution at new time $\tau\geq\tau_0$ equals that of the $u$
at the corresponding time $t\geq0$.

\medskip

$\bullet$ This equation enjoys a scaling transformation $\mathcal T_k$ that changes the mass:
\begin{equation}\label{mass.trans}
 \mathcal T_k[v(y,\tau)]= k\,v(k^{\beta_1}y_1, \cdots, k^{\beta_N}y_N, \tau) \quad\quad \beta_i=\frac{2-p_i}{p_i}
\end{equation}
with scaling parameter $k>0$. Working out the new mass we get
$$
\int_{\mathbb{R}^N} \mathcal T_k[v(y,\tau)]\, dy=\int_{\mathbb{R}^N}  v(y,\tau)\, dy
$$
with $ \mu=1-\sum_i \beta_i=N+1-\sum_i (2/p_i)=(N+1)-(2N/\overline p)$. We have $\mu>0$ since $\overline p>p_c$.


\section{Basic theory. Variational setting}\label{sec.basic}

The theory of the anisotropic $p$-Laplacian operator \eqref{APL} shares a
number of basic features with its best known relative, the standard isotropic $p$-Laplacian $\Delta_p$. These common traits have been already mentioned in the literature in the  case of anisotropy with same powers, but we will see here that the similarities extend to the general form. The only assumption we make in this setting is that \ $p_{i}>1$ for all $i=1,...,N$. We denote by $X^{\overrightarrow{p}}$ the anisotropic Banach space
\begin{equation}\label{X}
X^{\overrightarrow{p}}=\left\{u\in L^{2}(\R^{N}):\,u_{x_{i}}\in L^{p_{i}}(\R^{N}),\,\forall i=1,...,N\right\}
\end{equation}
endowed with the norm
\[
\|u\|_{X^{\overrightarrow{p}}}=\|u\|_{L^{2}}+\sum_{i=1}^{N}\|u_{x_{i}}\|_{L^{p_{i}}}.
\]
\medskip
It is easy to see that $C_{0}^{\infty}(\R^{N})$ is dense in $X^{\overrightarrow{p}}$ and that $X^{\overrightarrow{p}}$ reduces to $H^{1}(\R^{N})$ when $p=2$.

\indent $\bullet $ Let us consider the anisotropic operator
\begin{equation}
\mathcal{A}(u):=-\sum_{i=1}^N (|u_{x_i}|^{p_i-2}u_{x_i})_{x_i}\,,\label{A}
\end{equation}
defined on the domain
\[
D(\mathcal{A})=\left\{u\in X^{\overrightarrow{p}}:\,\mathcal{A}(u)\in L^{2}(\R^{N})\right\}.
\]
It is easy to see that $\mathcal{A}:D(\mathcal{A})\subset L^{2}(\R^{N})\rightarrow L^{2}(\R^{N})$ is the subdifferential of the convex functional
\begin{equation}\label{Jfunc}
\mathcal J (u)=
\ \left\{
\begin{array}
[c]{ll}%
\sum_{i=1}^N \frac1{p_i}\int_{\R^{N}} |u_{x_i}(x)|^{p_i}\,dx,  & \mbox{
 if }u\in X^{\overrightarrow{p}}\\[6pt]
& \\
+\infty & \mbox{ if }u\in L^{2}(\R^{N})\setminus X^{\overrightarrow{p}},
\end{array}
\right.
\end{equation}
whenever $p_i>1$ for all $i$. Then we have that the domain of $\mathcal J$ is $D( \mathcal {J} )=X^{\overrightarrow{p}}$. Now we use the theory of maximal monotone operators of \cite{BrezisOMM} (see also the monograph \cite{BarbuBk} and Chapter 10 of
\cite{Vlibro} for a summary and its application to the Porous Medium Equation). Let us prove some important facts, which follow from classical variational arguments.
Thus, we  can  solve the nonlinear elliptic equation
\begin{equation}\label{ell.eq.L}
 \lambda\mathcal{A}u+ u=f
 \end{equation}
in a unique way for all $f\in L^2(\ren)$ and all $\lambda>0$, with solutions $u\in D(\mathcal{A})$.

\begin{proposition}\label{existuniqstatprob}
For all $\lambda>0$ and $f\in L^{2}(\R^{N})$ there exists a unique strong
solution
$u\in X^{\overrightarrow{p}}$ of \eqref{ell.eq.L}. Moreover, the $T$-contractivity holds: if $f_{1},\,f_{2}\in L^{2}(\R^{N})$ and $u_{1},\,u_{2}$ solve \eqref{ell.eq.L} with datum $f_{1}$, $f_{2}$ respectively, we have
\begin{equation}\label{L2_Tcontr}
\int_{\R^{N}} (u_1-u_2)_+^2\,dx\le \int_{\R^{N}} (f_{1}-f_{2})^2_+\,dx\,,
\end{equation}
where $(f)_+=\max\{f(x),0\}$. Finally, a comparison  principle applies in the sense that $f_1\ge f_2$ {\textit{}{a.e.} in $\R^{N}$} implies $u_1\ge u_2$ {\textit{}{a.e.} in $\R^{N}$}.
 \end{proposition}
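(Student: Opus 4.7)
The plan is to view \eqref{ell.eq.L} as the Euler--Lagrange equation of the strictly convex functional
\begin{equation*}
\Phi(u):=\lambda\,\mathcal J(u)+\tfrac12\int_{\R^N}(u-f)^2\,dx,\qquad u\in X^{\overrightarrow p},
\end{equation*}
and to obtain the solution by direct minimization. Existence is then a standard application of the direct method: any minimizing sequence $\{u_n\}$ satisfies $\|u_n\|_{L^2}\le C$ (from the quadratic term, once $f\in L^2$ is absorbed) and $\|(u_n)_{x_i}\|_{L^{p_i}}\le C$ (from $\mathcal J$, using $p_i>1$). Weak compactness in $L^2$ and in each $L^{p_i}$ yields a weak limit $u\in X^{\overrightarrow p}$, and the convex integrands make $\Phi$ weakly lower semicontinuous, so $u$ is a minimizer. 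Strict convexity of $\Phi$ (coming from the $L^2$ term) gives uniqueness.

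Next I would show that a minimizer solves \eqref{ell.eq.L}. Writing the first variation against arbitrary $\varphi\in C_0^\infty(\R^N)$ (dense in $X^{\overrightarrow p}$) gives
\begin{equation*}
\lambda\sum_{i=1}^N\int_{\R^N}|u_{x_i}|^{p_i-2}u_{x_i}\,\varphi_{x_i}\,dx+\int_{\R^N}(u-f)\,\varphi\,dx=0,
\end{equation*}
so $\mathcal A u=(f-u)/\lambda$ in the sense of distributions. Since $f,u\in L^2(\R^N)$, this identity forces $\mathcal A u\in L^2(\R^N)$, hence $u\in D(\mathcal A)$ and \eqref{ell.eq.L} holds strongly in $L^2$. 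Uniqueness in $D(\mathcal A)$ follows either from the strict convexity of $\Phi$ or, independently, from the $T$-contractivity proved below.

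For the $T$-contractivity I would subtract the two equations and test with $w:=(u_1-u_2)_+\in X^{\overrightarrow p}$. The monotonicity of the scalar map $s\mapsto |s|^{p_i-2}s$ (one of the basic properties underlying $\mathcal A$ being a subdifferential) gives, on the set $\{u_1>u_2\}$ where $w_{x_i}=(u_1-u_2)_{x_i}$,
\begin{equation*}
\sum_{i=1}^N\int_{\R^N}\bigl(|u_{1,x_i}|^{p_i-2}u_{1,x_i}-|u_{2,x_i}|^{p_i-2}u_{2,x_i}\bigr)w_{x_i}\,dx\ge 0 .
\end{equation*}
The equation then reduces to $\int w^2\,dx\le\int(f_1-f_2)\,w\,dx\le\int(f_1-f_2)_+\,w\,dx$, and Cauchy--Schwarz yields \eqref{L2_Tcontr}. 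The comparison principle is immediate: if $f_1\ge f_2$ a.e., apply \eqref{L2_Tcontr} with the roles of the indices swapped so that the right-hand side vanishes, whence $(u_2-u_1)_+\equiv 0$.

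The main technical obstacle is the lower semicontinuity and coercivity check on the whole space $\R^N$ rather than on a bounded domain: one must be sure that weak $L^{p_i}$ limits of partial derivatives remain partial derivatives of the weak $L^2$ limit, which is a routine but non-empty step since $X^{\overrightarrow p}$ mixes different Lebesgue exponents in different directions. Density of $C_0^\infty(\R^N)$ in $X^{\overrightarrow p}$, already noted above, makes the identification of the Euler--Lagrange equation and the testing arguments for $T$-contractivity legitimate without any boundary contribution.
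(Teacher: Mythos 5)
Your proof is correct and follows essentially the same variational route as the paper: your functional $\Phi$ equals the paper's $\mathsf{J}$ plus the constant $\tfrac12\|f\|_{L^2}^2$ (you simply complete the square), and you proceed identically via the direct method, weak compactness in $L^2$ and in each $L^{p_i}$, lower semicontinuity, and strict convexity. The $T$-contractivity and strong-solution arguments you spell out (testing the difference of equations against $(u_1-u_2)_+$, monotonicity of $s\mapsto|s|^{p_i-2}s$, Cauchy--Schwarz, and observing $\mathcal A u=(f-u)/\lambda\in L^2$) are precisely the steps the paper's proof leaves implicit.
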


\begin{proof}
Let us define the functional
\[
\mathsf{J}(u)=\lambda\sum_{i=1}^{N}\frac{1}{p_{i}}\int_{\R^{N}}|u_{x_{i}}|^{p_{i}}dx+\frac{1}{2}\int_{\R^{N}}u^{2}dx
-\int_{\R^{N}}fu\,dx
\]
for any $u\in X^{\overrightarrow{p}}$. It is clear that $\mathsf{J}$ is strictly convex, thus if a minimizer exists, it is the unique weak solution to \eqref{ell.eq.L}. Let us prove that $\mathsf{J}$ is bounded from below. For any $u\in X^{\overrightarrow{p}}$ we have, by Young's inequality,
\[
\mathsf{J}(u)\geq \lambda\sum_{i=1}^{N}\frac{1}{p_{i}}\int_{\R^{N}}|u_{x_{i}}|^{p_{i}}dx+\left(\frac{1}{2}-\varepsilon\right)\int_{\R^{N}}u^{2}dx-C(\varepsilon)\int_{\R^{N}}f^{2}\,dx
\]
hence choosing $\varepsilon<1/2$
\[
\mathsf{J}(u)\geq -C(\varepsilon)\int_{\R^{N}}f^{2}\,dx.
\]
Now if $\left\{u_{n}\right\}\subset X^{\overrightarrow{p}}$ is a minimizing sequence of $\mathsf{J}$ it easily follows that
\[
\|u_{n}\|^{2}_{L^{2}(\R^{N})}\leq2\mathsf{J}(u_{n})+2\int_{\R^{N}}fu_{n}\,dx,
\]
then Young's inequality again provides
\[
(1-2\varepsilon)\|u_{n}\|^{2}_{L^{2}(\R^{N})}\leq 2\mathsf{J}(u_{n})+C(\varepsilon)\int_{\R^{N}}f^{2}dx,
\]
then by uniform boundedness of $\mathsf{J}(u_{n})$ the sequence $\left\{u_{n}\right\}\subset X^{\overrightarrow{p}}$  is bounded in $L^{2}(\R^{N})$, thus it admits a subsequence, which we still relabel $\left\{u_{n}\right\}$, weakly converging to some $u\in L^{2}(\R^{N})$. Now we observe that
\[
\lambda\frac{1}{p_{i}}\int_{\R^{N}}|\partial_{x_i} u_{n}|^{p_{i}}dx\leq \mathsf J(u_{n})+\int_{\R^{N}}f u_{n}\,dx \quad \text{ for every } i=1,...,N
\]
and since $\mathsf{J}(u_{n})$ is uniform bounded and $\left\{u_{n}\right\}$ is bounded in $L^{2}(\R^{N})$ we have that $\left\{\partial_{x_i}u_{n}\right\}$ is bounded in $L^{p_{i}}(\R^{N})$ for all $i=1,...,N$. Thus up to subsequences it follows $\partial _{x_{i}}u_{n}\rightharpoonup g_{i}$ weakly in $L^{p_{i}}(\R^{N})$, for each $i=1,...,N$. Since $u_{n}$ converges weakly in $L^{2}(\R^{N})$ to $u$ we find $g_{i}=\partial_{x_{i}}u$ for all $i=1,...,N$. By the lower semi-continuity of the $L^{q}(\R^{N})$ norms we then obtain
\begin{align*}
\liminf_{n\rightarrow\infty}\mathsf{J}(u_{n})&=\liminf_{n\rightarrow\infty}\left( \lambda\sum_{i=1}^N \frac1{p_i}\int_{\R^{N}} |\partial_{x_{i}}u_{n}|^{p_i}\,dx+\frac{1}{2}\int_{\R^{N}}u_{n}^{2}dx-\int_{\R^{N}}fu_{n}\,dx\right)
\\
&\geq \lambda\sum_{i=1}^{N}\frac{1}{p_{i}}\liminf_{n\rightarrow\infty}\int_{\R^{N}} |\partial_{x_{i}}u_{n}|^{p_i}\,dx +\frac{1}{2}\liminf_{n\rightarrow\infty}\int_{\R^{N}}u_{n}^{2}dx-\int_{\R^{N}}fu\,dx
\\
& \geq \lambda\sum_{i=1}^{N}\frac{1}{p_{i}}\int_{\R^{N}} |\partial_{x_{i}}u|^{p_i}\,dx+\frac{1}{2}\int_{\R^{N}}u^{2}dx-\int_{\R^{N}}f u\,dx=\mathsf{J}(u),
\end{align*}
 therefore $u$ is the unique minimizer of $\mathsf{J}$. In order to prove
the $T$ contraction, as usual we multiply by $(u_{1}-u_{2})_{+}$ the difference of the equations related to data $f_{1}$ and $f_{2}$ and integrate
in space. We are able to conclude using monotonicity of $\mathcal{A}$.

 Note that $\mathcal{A}(u)=(f-u)/ \lambda$, so we have $u\in  D(\mathcal A)$. The solution is therefore a strong solution.
\end{proof}

\begin{remark}
Proposition \ref{existuniqstatprob} holds if $f$ belongs to  the dual space of $X^{\overrightarrow{p}}$, where the dual norm replaces the $L^2$ norm at the right-hand side of \eqref{L2_Tcontr}.
\end{remark}

Note: this also applies for the problem posed in a bounded domain $\Omega$, and then the natural boundary condition is $u(x)\rightarrow0\quad\text{as }|x|\to\partial \Omega.$


By Proposition \ref{existuniqstatprob} we have that $R(I+\lambda\mathcal{A})=L^{2}(\R^{N})$ and the resolvent operator $R_{\lambda}(\mathcal{A})=(I+\lambda \mathcal{A})^{-1}:L^{2}(\R^{N})\rightarrow D(\mathcal{A})$ is onto and a contraction for all $\lambda>0$. Hence \cite[Proposition 2.2]{BrezisOMM} implies that $\mathcal{A}$ is a maximal monotone operator in $L^2(\R^{N})$ (in other words, $\mathcal{A}$ is maximal dissipative).
\medskip

\indent $\bullet $ Recall that $\mathcal{A}$ is the subdifferential of the convex functional
$\mathcal J (u)$, where $\mathcal{J}$ is lower semi-continuous on $L^{2}(\R^{N})$ (indeed it can be easily proven that its sublevel sets are strongly closed in $L^{2}(\R^{N})$, following some arguments of Proposition \ref{existuniqstatprob}). Hence, it follows from \cite[Theorem 3.1, Theorem
3.2]{BrezisOMM} that we can solve the evolution equation
\begin{equation}\label{parb.eq}
u_t=-\mathcal{A}(u)
 \end{equation}
for all initial data $u_0\in L^{2}(\R^{N})$. We observe that $D(\mathcal{A})$ is dense in $L^{2}(\R^{N})$, in other words we can construct the gradient flow in all of $L^{2}(\R^{N})$ \nc corresponding to the functional $\mathcal{J}$. In particular, the solution $u:[0,+\infty)\rightarrow L^{2}(\R^{N})$ is such that $u(t)\in D(\mathcal{A})$ for all $t>0$, this map
is Lipschitz in time, it solves equation \eqref{parb.eq} pointwise on $\R^{N}$ for a.e. $t>0$ and $u(0)=u_{0}$. Moreover the semigroup maps $S_t^\mathcal{A} :u_0\mapsto u(t)$ form a continuous semigroup of contractions in $L^2(\ren)$. Comparison  principle and $T$-contractivity hold in the
sense that
\begin{equation}\label{L2_Tcontr.par}
\int_{\R^{N}} (u_1(t)-u_2(t))_+^2\,dx\le \int_{\R^{N}} (u_{0,1}-u_{0,2})^2_+\,dx\,.
\end{equation}
We call $S_t^\mathcal{A}$ the semigroup generated by $\mathcal{J}$ and the corresponding function $u(\cdot,t)=S_t^\mathcal{A}(u_0)$ is called the semigroup solution of the evolution problem (or more precisely the  $L^2$ semigroup solution). In particular, $u$ solves the partial differential equation \eqref{parb.eq} in the sense of {\sl strong solutions} in $L^2(\ren)$, \emph{i.e.} it agrees with the following definition:

\noindent {\bf Definition.} If $X$ is a Banach space, a function $u\in C((0,T);X)$ is called a strong solution of the abstract ODE: $u_t=-\mathcal{A}u$ if it is absolutely differentiable as an $X$-valued function of time for a.e. $t>0$, and moreover $u(t)\in D(L)$ and $du/dt=Lu$ for almost all times.
The theory says  that when $X$ is a Hilbert space and $\mathcal{A}$ is a subdifferential then the semigroup solution is a strong solution and $u(t)\in D(A)$ for all $t>0$. When $u_{0}\in L^{2}(\R^{N})$, since $D(\mathcal{A})$ is dense $L^{2}(\R^{N})$, we can use this theory to get strong solutions for every initial datum in that class.

\medskip

\indent $\bullet $ The semigroup solution has extra regularity in anisotropic Sobolev spaces by virtue of the following two computations, see \cite[Theorem 3.2]{BrezisOMM} :
\begin{equation}\label{firstenergy}
\frac12\frac{d}{dt}\|u(t)\|_2^2= -\langle  \mathcal{A}u(t),  u(t)\rangle_{L^{2}}=-\sum_{i=1}^N \int_{\ren} |u_{x_i}(x)|^{p_i}\,dx \leq -(\min_i{p_i})\,\mathcal{J}(u(t)).
\end{equation}

Moreover we have the following entropy-entropy dissipation identity
\begin{equation}\label{secondenergy}
\frac{d}{dt}{\mathcal J}(u(t)) =\langle  \mathcal{A}u(t), u_t(t)\rangle=-\|u_t(t)\|^2_2,
\end{equation}
where the norms are taken in $\ren$. It follows that both $\|u(t)\|_2$ and ${\mathcal J}(u(t))$ are decreasing in time. Then from \eqref{firstenergy} integrating on $(0,t)$ we get the estimate
\begin{equation}\label{esttimeder1}
{\mathcal J}(u(t))\le C\|u_0\|_2^2/t \quad \mbox{ for every } t>0,
\end{equation}
and from \eqref{secondenergy} integrating on $(t_1,t_2)$
\begin{equation}
\int_{t_1}^{t_2}\int_{\ren} u_t^2(x,t)\,dxdt \le {\mathcal J}(u(t_1)).\label{esttimeder2}
\end{equation}
This Sobolev regularity gives the compactness for times $t\ge \tau>0$ that we will need in Subsection \ref{sec.asymp}.

\medskip

\indent $\bullet $In this work we will also need an important extra  property of the $L^2$ semigroup which is the property of generating a contraction semigroup with respect to the norm of $L^q(\ren)$ for all $q\ge 1$, in particular for $q=1$. The $q$-semigroup in such a norm is defined first by restriction of the data to
$L^2(\ren)\cap L^q(\ren)$ and then is it extended to $L^q(\ren) $ by the technique of continuous extension of bounded operators. We leave the details to the reader, since it is well-known theory, but see next section.

\medskip

We will concentrate in the sequel on the semigroup solutions corresponding to data $u_0\in L^1(\ren)$, which we may call $L^1$ semigroup solutions. Apart from existence, uniqueness and comparison, we will need three extra properties: boundedness for positive times and comparison with super- and subsolutions defined in a  suitable way.

For future reference, let us state a general decay result.

\begin{proposition}\label{decay of the $L^p$}
If $u_0\in L^q(\mathbb{R}^N)$ for $q\in[1,+\infty]$, then the $L^q$ norms
$\|u(t)\|_{q}$ are non-increasing in time.
\end{proposition}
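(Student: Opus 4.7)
I would split according to the exponent regime: $q\in(1,\infty)$ by a direct $L^q$ energy identity, $q=1$ by the $L^1$ $T$-contractivity, and $q=\infty$ by comparison with constant stationary solutions. In every case one first establishes $\|u(t)\|_q\le\|u_0\|_q$; genuine monotonicity in $t$ then follows at once from the semigroup property $u(t+s)=S_{s}^{\mathcal A}u(t)$ applied with new initial datum $v_0:=u(t)\in L^q(\ren)$, which yields $\|u(t+s)\|_q\le\|u(t)\|_q$ for all $s\ge 0$.

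\textbf{The case $1<q<\infty$.} I would multiply the equation $u_t=-\mathcal Au$ by $|u|^{q-2}u$ and integrate over $\ren$. Using $(|u|^{q-2}u)_{x_i}=(q-1)|u|^{q-2}u_{x_i}$ and integrating by parts, one formally obtains
\[
\frac{1}{q}\frac{d}{dt}\int_{\ren}|u(x,t)|^{q}\,dx=-(q-1)\sum_{i=1}^{N}\int_{\ren}|u|^{q-2}|u_{x_i}|^{p_i}\,dx\le 0,
\]
so $\|u(t)\|_q$ is non-increasing. The computation is performed first for smooth data, after regularising the test function as $\varphi_\varepsilon(s)=(s^2+\varepsilon)^{(q-2)/2}s$ (to avoid the singularity of $|u|^{q-2}u$ near $\{u=0\}$ when $1<q<2$), letting $\varepsilon\to 0^+$ with the help of the energy bound \eqref{firstenergy}, and then extended to all $u_0\in L^2\cap L^q$ by density plus the $L^q$-continuity of $S_t^{\mathcal A}$ already granted at the end of Section \ref{sec.basic}. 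For general $u_0\in L^q(\ren)$ the conclusion passes through the continuous extension that defines the $L^q$-semigroup, since a uniform inequality on a dense subspace is preserved by contractive extension.

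\textbf{The endpoints.} For $q=1$, the $L^1$ $T$-contractivity (obtained either by passing to the limit $q\to 1^+$ in the $L^q$ version, or directly from the $m$-accretivity of $\mathcal A$ on $L^1$) applied to the pair $(u,0)$, exploiting the fact that $u\equiv 0$ is a stationary solution, gives
\[
\int_{\ren}(u(t))_{\pm}\,dx\le\int_{\ren}(u_0)_{\pm}\,dx,
\]
and summing the two inequalities produces $\|u(t)\|_1\le\|u_0\|_1$. For $q=\infty$, every constant $c\in\R$ is a solution of \eqref{APL}, so comparison between $u$ and the two constants $\pm\|u_0\|_\infty$ (in the form of the parabolic comparison principle encoded in \eqref{L2_Tcontr.par} and extended to bounded data by truncation) yields the pointwise bound $|u(x,t)|\le\|u_0\|_\infty$ a.e., hence $\|u(t)\|_\infty\le\|u_0\|_\infty$.

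\textbf{Main obstacle.} The only real technical nuisance is the rigorous justification of the $|u|^{q-2}u$ test function when $1<q<2$, where it is not Lipschitz at $u=0$; this is addressed by the $\varepsilon$-regularisation above. A secondary subtlety is that for $u_0\in L^q\setminus L^2$ the solution is obtained only through the contractive extension of the $L^2$-semigroup, so one must check that the monotonicity of $\tau\mapsto\|S_\tau^{\mathcal A}v_0\|_q$ on $L^2\cap L^q$ survives the extension; this is immediate because a non-strict inequality between continuous functions is stable under norm-convergence of the arguments.
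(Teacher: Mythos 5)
The paper states this proposition without proof, presenting it as a direct consequence of the contraction-semigroup machinery already set up (the $\mathcal{A}$-semigroup is contractive on every $L^q$, $1\le q\le\infty$, by $m$-accretivity and interpolation, and the zero function is a stationary solution, so $\|u(t)\|_q=\|u(t)-0\|_q\le\|u(s)-0\|_q$ for $t\ge s$). Your argument reaches the same conclusion by a more hands-on, three-case route: the $L^q$ energy identity with the regularised test function $\varphi_\varepsilon$ for $1<q<\infty$, the $L^1$ $T$-contraction against the zero solution for $q=1$, and comparison with constant solutions for $q=\infty$. Both are correct; the paper's implicit route is shorter because it reuses the abstract accretivity framework as a black box, while yours makes the dissipation mechanism explicit (the quantity $(q-1)\sum_i\int|u|^{q-2}|u_{x_i}|^{p_i}\,dx$ is precisely the $L^q$ energy dissipation, which is information the abstract route does not produce). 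One small refinement worth noting: the $q=\infty$ case can be obtained more economically by letting $q\to\infty$ in the already-established inequalities $\|u(t)\|_q\le\|u_0\|_q$ for $u_0\in L^1\cap L^\infty$, avoiding the need to justify comparison with (non-integrable) constants via truncation; your truncation argument works but requires a compactness step that the limiting argument sidesteps. Otherwise the proposal is sound, including the observation that promoting the $t=0$ inequality to monotonicity for all $t$ is automatic by the semigroup property, and that passing from the dense class $L^2\cap L^q$ to all of $L^q$ is safe because non-strict inequalities survive norm limits.
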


Two reminders about related results. First, the variational theory applies in bounded domains with suitable boundary data.

\begin{remark} The semigroup theory applies to Dirichlet boundary problem
defined in a bounded domain $\Omega$ as well with zero boundary data.
\end{remark}

We can also consider equations with a right-hand side.

\begin{remark} The complete evolution equation
$$
u_t+ \mathcal{A}(u)=f,
$$
including a forcing term can also be treated with the same maximal monotone theory when $f\in L^2(0,T:L^2(\ren)$ or $f\in L^2(0,T:L^2(\Omega)$ \end{remark}

We will not need such developments here. In the last case we do not get a
semigroup but a more complicated object $u=u(x,t; u_0,f)$.


\section{The $L^1$ theory}\label{existence}

In this section we will extend to the framework of the $L^1(\ren)$ space the existence result for solutions to the Cauchy problem for the full anisotropic  equation \eqref{APL}. This amounts in practice  to extending the contraction semigroup defined in $L^2(\ren)$ in the previous section to
a contraction semigroup in  $L^1(\ren)$, an issue that has been studied in some detail in the literature on linear and nonlinear semigroups, see \cite{CL86, CP72, Evans78, Pazy83, Show97}.  We will work for simplicity under the assumptions (H1)-(H2) (but see Remark \ref{RemarkIp}).

For the reader's benefit we will present the most important details. Experts may skip this section.
 The extension  will be done by means of nonlinear semigroup theory in Banach spaces and using the results of previous section in Hilbert spaces. We will provide the existence of a \emph{mild} solution by solving the \emph{implicit time discretization scheme} (ITDS for short).  Since the ITDS, as we see below, is based on the existence and uniqueness of solutions
to the stationary elliptic problem with a zero order term, we will first recollect briefly some information concerning the problem
\begin{equation}\label{Pf1 RN}
\left\{
\begin{array}
[c]{ll}%
-\sum_{i=1}^N (|u_{x_i}|^{p_i-2}u_{x_i})_{x_i}+ \mu u=f & \quad
\hbox{in $\mathbb{R}^N$}\\[6pt]
{u(x)\rightarrow0} & \quad \hbox{ as } |x|\rightarrow\infty
\end{array}
\right.
\end{equation}
for arbitrary constant $\mu>0$.
\begin{theorem}\label{ellipticexi}
Assume $f\in L^{1}(\R^{N})$ and $\mu>0$. Then there is a unique strong solution $u\in L^1(\mathbb{R}^N)$ to \eqref{Pf1 RN}. Moreover, the following $L^1$ contraction principle holds:
if $f_{1},\, f_2\in L^{1}(\R^{N})$ and $u_1$, $u_2$ are the corresponding
solutions, we have
\begin{equation}\label{elliptL1_contr}
\int_{\R^{N}} (u_1-u_2)_+\,dx\le \int_{\R^{N}} (f_{1}-f_{2})_+\,dx\,.
\end{equation}
In particular, if $f_{1}\leq f_{2}$ we have $u_1\le u_2$ a.e. .
\end{theorem}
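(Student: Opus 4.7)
The natural strategy is an approximation-in-$L^1$ argument: truncate the datum to obtain approximants lying in the Hilbert-space setting of Proposition \ref{existuniqstatprob}, prove an $L^1$ contraction estimate for those smoother problems, and then use the contraction together with a Cauchy argument in $L^1$ to extend by density and pass to the limit. Uniqueness is read off from the same contraction estimate.

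\textbf{Step 1: approximation and a priori bounds.} For $f\in L^{1}(\mathbb{R}^{N})$, let $f_k:=T_k(f)\chi_{B_k}\in L^{1}\cap L^{2}(\mathbb{R}^{N})$, where $T_k$ is the usual truncation at height $k$. By Proposition \ref{existuniqstatprob} applied with $\lambda=1/\mu$ (after dividing the equation by $\mu$), we get a unique strong solution $u_k\in X^{\overrightarrow{p}}\cap D(\mathcal{A})$ of $-\sum_i(|u_{k,x_i}|^{p_i-2}u_{k,x_i})_{x_i}+\mu u_k=f_k$.

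\textbf{Step 2: $L^1$ contraction at the approximate level.} Take two such solutions $u_1,u_2$ corresponding to data $f_1,f_2\in L^1\cap L^{2}$. Subtract the equations and test with $p_\varepsilon(u_1-u_2)$, where $p_\varepsilon$ is a smooth nondecreasing approximation of $\mathrm{sign}_+$ with $0\le p_\varepsilon\le 1$ and $p_\varepsilon'\ge 0$. The anisotropic diffusion term yields, after integration by parts,
\begin{equation*}
\sum_{i=1}^N\int_{\mathbb{R}^N}\bigl(|u_{1,x_i}|^{p_i-2}u_{1,x_i}-|u_{2,x_i}|^{p_i-2}u_{2,x_i}\bigr)(u_{1,x_i}-u_{2,x_i})\,p_\varepsilon'(u_1-u_2)\,dx\ \ge\ 0,
\end{equation*}
by the coordinate-wise monotonicity of the maps $s\mapsto|s|^{p_i-2}s$. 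The zeroth-order term gives $\mu\int p_\varepsilon(u_1-u_2)(u_1-u_2)\,dx$ and the right-hand side is bounded by $\int(f_1-f_2)_+\,dx$. Letting $\varepsilon\to 0$ (by monotone/dominated convergence, using $u_1-u_2\in L^1\cap L^2$) yields
\begin{equation*}
\mu\int_{\mathbb{R}^N}(u_1-u_2)_+\,dx \ \le\ \int_{\mathbb{R}^N}(f_1-f_2)_+\,dx,
\end{equation*}
which is the announced $L^1$ contraction (with the implicit factor $\mu$, or with $\mu=1$ as in the statement after rescaling). Comparison is the special case $f_1\le f_2$.

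\textbf{Step 3: Cauchy in $L^1$ and passage to the limit.} Applying Step 2 to $u_k,u_\ell$ we find $\mu\|u_k-u_\ell\|_1\le\|f_k-f_\ell\|_1\to 0$, so $\{u_k\}$ is Cauchy in $L^{1}(\mathbb{R}^N)$ and converges to some $u\in L^{1}(\mathbb{R}^N)$, with $\mu\|u\|_1\le\|f\|_1$. To identify the limit as a solution, one needs compactness of the gradients in the nonlinear fluxes; this is where the main technical work lies. The standard Boccardo--Gallou\"et-type estimate in the anisotropic setting (obtained by testing with $T_k(u_k)$ or $\Phi(u_k)$ with $\Phi$ bounded Lipschitz) gives uniform bounds of the form $\|u_{k,x_i}\|_{L^{q_i}_{\mathrm{loc}}}\le C$ for some $q_i<p_i$, which combined with the Marcinkiewicz-type control coming from the $L^1$ bound on $u_k$ yields, up to a subsequence, $u_{k,x_i}\to u_{x_i}$ a.e. This last a.e. convergence of gradients, proved by the Minty/Leray--Lions monotonicity trick applied coordinate-wise, is the \emph{main obstacle} and the crux of the $L^1$ theory; once it is in hand, $|u_{k,x_i}|^{p_i-2}u_{k,x_i}\to |u_{x_i}|^{p_i-2}u_{x_i}$ in $L^{1}_{\mathrm{loc}}$ (by Vitali, using the uniform bound in $L^{q_i/(p_i-1)}_{\mathrm{loc}}$), and one passes to the limit in the distributional formulation to conclude $-\sum_i(|u_{x_i}|^{p_i-2}u_{x_i})_{x_i}+\mu u=f$ in $\mathcal{D}'(\mathbb{R}^N)$.

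\textbf{Step 4: strong solution, decay, and uniqueness.} Because $u\in L^{1}$ and $\mu u-f\in L^{1}$, the equation forces $\sum_i(|u_{x_i}|^{p_i-2}u_{x_i})_{x_i}\in L^{1}$, so $u$ is a strong solution in $L^1$. Decay at infinity ($u(x)\to 0$ as $|x|\to\infty$) follows from $u\in L^1$ together with the local regularity provided by the anisotropic De Giorgi--Ladyzhenskaya--Ural'tseva estimates applicable under (H1)--(H2) (alternatively one appeals to the $L^1$--$L^\infty$ smoothing effect proved later in Theorem~\ref{L1LI}, once it is available, applied to the parabolic problem with $u$ as initial datum). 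Finally, uniqueness of the strong $L^1$ solution is immediate from the contraction estimate of Step 2, which transfers to the limiting problem by lower semicontinuity.
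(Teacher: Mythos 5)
Your overall route mirrors the paper's: truncate $f$ to $L^1\cap L^2$ data, establish the $L^1$ contraction for the regularized problems via a sign$^+$-type test function plus monotonicity, deduce Cauchy-ness in $L^1$, and pass to the limit through Boccardo--Gallou\"et/Marcinkiewicz estimates. However, there are two substantive issues.

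First, and most importantly, Step~3 has a genuine gap. The anisotropic Boccardo--Gallou\"et argument (which the paper carries out following \cite{B6L1 theory}) delivers uniform bounds in the Marcinkiewicz spaces $M^{s_i}(\R^N)$ with $s_i=\frac{N'}{\bar p'}p_i$. Weak $L^{q_i}_{\mathrm{loc}}$ compactness of the gradients (with $q_i>1$), which your argument relies on to get a distributional solution, requires $s_i>1$. But under (H1)--(H2) it is perfectly possible that $s_i\le1$ for some $i$; for instance with $N=2$, $p_1=1.01$, $p_2=1.99$ one has $\bar p>p_c$ while $s_1<1$. In that range the family $\{\partial_{x_i}u_k\}$ only lives in a space that is not even locally convex, your Leray--Lions/Minty step loses its footing, and the notion of distributional solution degenerates. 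The paper recognizes this dichotomy explicitly and switches to the framework of entropy solutions (testing with $T_m(u_k)$ to get bounds on $\partial_{x_i}T_m(u)\in L^{p_i}$ rather than on $\partial_{x_i}u$ directly). Your proof does not address the case where some $s_i\le1$, so as written it does not cover the full parameter range claimed in the theorem.

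Second, a pair of minor technical points. In Step~2 you integrate by parts on all of $\R^N$ against $p_\varepsilon(u_1-u_2)$, a test function without compact support; the paper instead multiplies by a cutoff $\zeta_n$ and estimates the resulting error term by H\"older before letting $n\to\infty$, which is the clean way to justify the passage. And in Step~4, appealing to the $L^1$--$L^\infty$ smoothing effect of Theorem~\ref{L1LI} to get decay of the elliptic solution at infinity is circular: that theorem is proved through the parabolic symmetrization machinery which is itself built on the ITDS, and hence on the elliptic existence result you are in the middle of proving. The paper does not argue decay this way.
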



\begin{proof}
We can proceed by approximation. Let us denote $T_k(s):=\min\{|s|,|m|\}\text{sign} s$ and  let us take $f_k=T_k(f)\in L^2(\mathbb{R}^N)\cap L^1(\mathbb{R}^N)$ such that $f_k\rightarrow f$ in $L^1(\mathbb{R}^N)$ and $\|f_k\|_{L^1(\mathbb{R}^N)}\leq\|f\|_{L^1(\mathbb{R}^N)}$ as a datum in \eqref{Pf1 RN} .

\medskip

i) Let $u^1_k$ and $u^2_k$ two solutions of the approximate problems with
respectively data $f^1_k$ and $f^2_k$ in $L^2(\mathbb{R}^N)$. Following  \cite[Prop. 9.1]{Vlibro}, let $p(s)$ a smooth approximation of the positive part of the sign function $\text{sign}(s)$, with $p(s)=0$ for $s\leq0$, $0\leq p(s)\leq 1$ for all $s\in\R$ and $p^{\prime}(s)\geq0$ for all $s\geq0$. Take any cutoff function $\zeta\in C_{0}^{\infty}(\R^{N})$, $0\leq\zeta\leq1$, $\zeta(x)=1$ for $|x|\leq1$, $\zeta(x)=0$ for $|x|\geq 2$ and set $\zeta_{n}(x)=\zeta(x/n)$ for $n\geq1$, so that $\zeta_{n}\uparrow 1$ as $n\rightarrow\infty$. Using $p(u^1_k-u^2_k)\,\zeta_{n}(x)$
as test function in the difference of equations and letting $p$ tend to $\text{sign}^{+}$
we get
\begin{equation*}
\begin{split}
\sum_{i=1}^N&\int_{\R^{N}}
\left(|\partial_{x_i}u^1_k|^{p_i-2}\partial_{x_i}u^1_k-|\partial_{x_i}u^2_k|^{p_i-2}\partial_{x_i}u^{2}_k\right)_{x_i\,}
\text{sign}^{+}(u^1_k-u^2_k)\,\zeta_{n}(x)dx
+
\\&\mu\int_{\R^{N}}
\left(u^1_k-u^{2}_k\right)
\text{sign}^{+}(u^1_k-u^2_k)\,\zeta_{n}(x)dx
=\int_{\R^{N}}
\left(f^1_k-f^2_k\right)
\text{sign}^{+}(u^1_k-u^2_k)\,\zeta_{n}(x)dx.
\end{split}
\end{equation*}
Now the monotonicity of operator gives
\begin{equation*}
\begin{split}
\mu &\int_{\R^{N}}
\left(u^1_k-u^{2}_k\right)
\text{sign}^{+}(u^1_k-u^2_k)\,\zeta_{n}(x)dx
\leq \int_{\R^{N}}
\left(f^1_k-f^2_k\right)_+\,\zeta_{n}(x)dx
\\
&- \sum_{i=1}^N\int_{\R^{N}}
\left(|\partial_{x_i}u^1_k|^{p_i-2}\partial_{x_i}u^1_k-|\partial_{x_i}u^2_k|^{p_i-2}\partial_{x_i}u^2_k\right)
\text{sign}^{+}(u^1_k-u^2_k)\,\partial_{x_i}\zeta_{n}(x)dx
\end{split}
\end{equation*}
We let now $n\rightarrow \infty$ to obtain
\begin{equation}\label{L1 contr approx}
\int_{\R^{N}} (u^1_k-u^2_k)_+\,dx\le \int_{\R^{N}} (f^{1}_k-f^{2}_k)_+\,dx\,,
\end{equation}
since the right-hand side goes to zero. Indeed we have
\begin{equation*}
\begin{split}
\sum_{i=1}^N\int_{\R^{N}}&
\left(|\partial_{x_i}u^1_k|^{p_i-2}\partial_{x_i}u^1_k-|\partial_{x_i}u^2_k|^{p_i-2}\partial_{x_i}u^2_k\right)
\text{sign}^{+}(u^1_k-u^2_k)\,\partial_{x_i}\zeta_{n}(x)dx
\\
&\leq
\sum_{i=1}^N
\left(
\int_{\R^{N}}
\left(|\partial_{x_i}u^1_k|^{p_i-2}\partial_{x_i}u^1_k-|\partial_{x_i}u^2_k|^{p_i-2}\partial_{x_i}
u^2_k\right)^{p_i'}
dx
\right)^{\frac{1}{p_i'}}
\frac{1}{n}\left(
\int_{\R^{N}}
\partial_{x_i}\zeta^{p_i}_{n}(x)dx
\right)^{\frac{1}{p_i}}
\\
&
\leq\sum_{i=1}^N
\left(
\int_{\R^{N}}
\left(|\partial_{x_i}u^1_k|^{p_i-2}\partial_{x_i}u^1_k-|\partial_{x_i}u^2_k|^{p_i-2}\partial_{x_i}
u^2_k\right)^{p_i'}
dx
\right)^{\frac{1}{p_i'}}
\frac{1}{n}\|\partial_{x_i}\zeta_{n}\|_{\infty}\left(
\int_{n<|x|<2n}dx
\right)^{\frac{1}{p_i}}
\end{split}
\end{equation*}
and that $\left(|\partial_{x_i}u^1_k|^{p_i-2}\partial_{x_i}u^1_k-|\partial_{x_i}u^2_k|^{p_i-2}\partial_{x_i}
u^2_k\right)^{p_i'}\in L^1(\R^{N})$.

\medskip

ii) By \eqref{L1 contr approx} it follows that $\{u^j_k\}$ is a Cauchy sequence in $L^1(\mathbb{R}^N)$, then $u^j_k\rightarrow u^j$ in $L^1(\mathbb{R}^N)$ for $j=1,2$ and we can pass to the limit in \eqref{L1 contr approx} obtaining \eqref{elliptL1_contr}.

\medskip

iii) Using $T_m(u_k)$ as test function in problem with datum $f_k$ we get
the following a priori estimate
\begin{equation*}\label{stima 2}
\sum_{i=1}^N\int_{\mathbb{R}^N} \left|\left(T_m(u_k)\right)_{x_i}\right|^{p_i}\, dx+\mu\int_{\mathbb{R}^N}\left(T_m(u_k)\right)^2 \, dx\leq m C(N,p_1,\cdots,p_N,\|f\|_{L^1(\mathbb{R}^N)})
\end{equation*}
for every $m>0$. 
By an anisotropic version of Lemma 4.1 and 4.2 of \cite{B6L1 theory} we have
\begin{equation}\label{stima 22}
\sum_{i=1}^N
\|\left(u_k\right)_{x_i}\|_{M^{s_i}(\mathbb{R}^N)}\leq C(N,p_1,\cdots,p_N,\mu,\|f\|_{L^1(\mathbb{R}^N)})
\end{equation}

where $M^{s_i}$ denote the Marcinkiewicz (or weak-$L^{s_i}$) spaces and $s_i=\frac{N'}{\bar p'}p_i$ 
for $i=1,\cdots,N$.

%

\medskip

When $s_i>1$ for all $i$, estimate \eqref{stima 22} yields that sequence $\{\partial_{x_i}u_k\}$ is bounded in $L^{q_i}_{loc}(\mathbb{R}^N)$ with $1< q_i<\frac{N'}{\bar p'}p_i$. Then (up a subsequence) $\partial_{x_i}u_k\rightarrow \partial_{x_i}u$ weakly in $L^{q_i}_{loc}(\mathbb{R}^N)$ and
$u\in L^1(\mathbb{R}^N)\cap W^{1,1}_{loc}(\mathbb{R}^N)$ is a distributional solution to \eqref{Pf1 RN}. Moreover we get $u_{x_i}\in M^{\frac{N'}{\bar p'}p_i}(\mathbb{R}^N)$ and $u\in M^{\frac{N(\bar p-1)}{N-\bar p}}(\mathbb{R}^N)$, because
\begin{equation}\label{stima 23}
\| u_k\|_{M^{\frac{N(\bar p-1)}{N-\bar p}}(\mathbb{R}^N)}\leq C(N,p_1,\cdots,p_N,\mu,\|f\|_{L^1(\mathbb{R}^N)}).
\end{equation}

 When at least one $s_i\leq 1$ and $\bar p>p_c$ we have to consider a different notion of solution, see e.g. \cite{B6L1 theory} for entropy solution's one. Following \cite{B6L1 theory} there exists a unique entropy solution  and $\frac{\partial}{\partial x_i}T_m(u)\in L^{p_i}(\mathbb{R}^N)$ and $u\in L^1(\mathbb{R}^N)\cap M^{ \frac{N(\bar p-1)}{N-\bar p}}(\mathbb{R}^N)$ by \eqref{stima 23}.
\end{proof}

In order to obtain the existence of solutions to the nonlinear parabolic problem we use the Crandall-Liggett theorem \cite{CL71} see also Chapter 10 of \cite[Chapter 10]{Vlibro}, which we briefly recall here in the abstract framework. Let $X$ be a Banach space and $\mathcal{A}:D(\mathcal{A})\subset X\rightarrow X$ a nonlinear operator defined on a suitable subset
of $X$. We start from the abstract Cauchy problem
\begin{equation}\label{eqcauchyabstract.3}
\left\{
\begin{array}
[c]{lll}%
u^{\prime}(t)+\mathcal{A}(u)=f,  &  & t>0,%
\\[4pt]
u(0)=u_{0}\,, &  &
\end{array}
\right.
\end{equation}
where $u_{0}\in X$ and $f\in L^{1}(0,T;X)$ for some $T>0$. We first take a partition of the interval, say, $t_k=kh$ for $k=0,1,\ldots n$ and $h=T/n$, and then we solve the ITDS, made by the system of difference relations
\begin{equation}
\frac{u_{h,k}-u_{h,k-1}}{h}+\mathcal{A}(u_{h,k})=f_{k}^{(h)}\label{discrellprob}
\end{equation}
for $k=0,1,\ldots n$, where we set $u_{h,0}=u_{0}$. The data set $\left\{f_{k}^{(h)}:k=1,\ldots,n\right\}$ is supposed to be a discretization
of the source term $f$, satisfying the relation
\[
\|f^{(h)}-f\|_{L^{1}(0,T;X)}\rightarrow0\quad\text{as }h\rightarrow0.
\]
The discretization scheme is then rephrased in the form
\[
u_{h,k}=J_{h}(u_{h,k-1}+hf_{k}^{(h)})
\]
where
\[
J_{\lambda}=(I+\lambda \mathcal{A})^{-1},\,\lambda>0
\]
is called the \emph{resolvent operator}, being $I$ the identity operator.
When the ITDS is solved, we construct a \emph{discrete approximate solution}
$\left\{u_{h,k}\right\}_{k}$, which is the piecewise constant function $u_h(t)$,
defined (for instance) by means of
\begin{equation}
u_{h}(t)=u_{h,k}\quad\text{if }t\in[(k-1)h,kh]\label{interpol}.
\end{equation}
If the operator $\mathcal{A}$ is $m$-accretive, we have that for all $u_{0}\in \overline{D(\mathcal{A})}$, the abstract  problem
\eqref{eqcauchyabstract.3} has a unique \emph{mild solution} $u$, \emph{i.e.} a function $u\in C([0,T);X)$ which is obtained as uniform limit of
approximate solutions of the type $u_{h}$, as $h\rightarrow0$, where the initial datum is taken in the sense that $u(t)$ is continuous in $t=0$ and $u(t)\rightarrow u_{0}$ as $t\rightarrow0$. We have then as $h\rightarrow0$:
 \[
 u(t):=\lim_{h\rightarrow0}u_{h}(t)\,,
 \]
 and the limit is always uniform in compact subsets of $[0,\infty)$.  Then we can prove the following parabolic existence-uniqueness result:
 \begin{theorem}\label{parabolicexistence}
 Let $0<T\leq + \infty$ and $Q_{T}:=\R^{N} \times(0,T)$. For any $u_{0}\in L^{1}(\R^{N})$ and any $f\in L^{1}(Q)$ there is a unique mild solution to the Cauchy problem
  \begin{equation}\label{Pf}
\left\{
\begin{array}
[c]{lll}%
u_t-\sum_{i=1}^N (|u_{x_i}|^{p_i-2}u_{x_i})_{x_i}=f  &  & \text{ in }
Q
\\[6pt]
u(x,0)=u_{0}(x) &  &   \text{ in } \R^{N}.
\end{array}
\right. %
\end{equation}
Moreover for every two solutions $u_1$ and $u_2$ to \eqref{APL} with respectively initial data $u_{0,1}$ and $u_{0,2}$ in $L^{1}(\R^{N})$ and source terms $f_{1},\,f_{2}\in L^{1}(Q_{T})$ we have for any $0\leq s\leq t<T$
\begin{equation}\label{L1_contr}
\int_{\R^{N}} (u_1(t)-u_2(t))_+\,dx\le \int_{\R^{N}} (u_{1}(s)-u_{2}(s))_+\,dx+\int_{s}^{t}\left[u_{1}(\tau)-u_{2}(\tau),f_{1}(\tau)-f_{2}(\tau)\right]_{+}\,d\tau,
\end{equation}
with the Sato bracket notation
\[
\left[v,w\right]_{+}=\inf_{\lambda>0}\frac{\|(v+\lambda w)_{+}\|_{L^{1}}-\|w_{+}\|_{L^{1}}}{\lambda}
\]
In particular, if $u_{0,1}\le u_{0,2}$ and $f_{1}\le f_{2}$ a.e., then for every $t>0$ we have $u_1(t)\le u_2(t)$ a.e..
\end{theorem}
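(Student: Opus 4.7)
The plan is to apply the Crandall--Liggett generation theorem quoted after \eqref{eqcauchyabstract.3} to the operator $\mathcal{A}$ of \eqref{A}, now regarded as an (a priori multivalued) operator on the Banach space $X=L^{1}(\R^{N})$ with graph
$$
D_1(\mathcal{A})=\{u\in L^{1}(\R^{N}):\,\mathcal{A}u\in L^{1}(\R^{N})\ \text{in the distributional sense}\}.
$$
Two abstract properties must be verified: (i) $\mathcal{A}$ is $m$-T-accretive in $L^{1}$; (ii) $\overline{D_1(\mathcal{A})}=L^{1}(\R^{N})$. These are exactly what the elliptic subsection above gives us: after rescaling Theorem \ref{ellipticexi} with $\mu=1/\lambda$, the existence part provides $R(I+\lambda\mathcal{A})=L^{1}(\R^{N})$ for every $\lambda>0$, while \eqref{elliptL1_contr} translates into the T-accretivity inequality
$$
\|(u_{1}-u_{2})_+\|_{L^{1}}\le \|(u_{1}-u_{2}+\lambda(\mathcal{A}u_{1}-\mathcal{A}u_{2}))_+\|_{L^{1}}.
$$
Property (ii) follows from the range condition itself: the resolvents $J_{\lambda}=(I+\lambda\mathcal{A})^{-1}$ map $L^{1}$ into $D_1(\mathcal{A})$ and $J_{\lambda}g\to g$ in $L^{1}$ as $\lambda\to0^{+}$ (verified first for smooth compactly supported $g$ using the a priori bounds of Theorem \ref{ellipticexi}, then extended by a density/contraction argument).

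With (i)--(ii) in hand, I would implement the ITDS \eqref{discrellprob} as follows. For $T<\infty$ and $h=T/n$, set $f_{k}^{(h)}:=\frac{1}{h}\int_{(k-1)h}^{kh}f(\tau)\,d\tau\in L^{1}(\R^{N})$, so $f^{(h)}\to f$ in $L^{1}(0,T;L^{1}(\R^{N}))$ by Lebesgue differentiation, and define inductively $u_{h,k}=J_{h}(u_{h,k-1}+hf_{k}^{(h)})$ with $u_{h,0}=u_{0}$. Each step is uniquely solvable in $L^{1}$ by Theorem \ref{ellipticexi}. The Crandall--Liggett estimate, whose only input is the T-accretivity of $\mathcal{A}$ together with the $L^{1}$-convergence of the discretized data, shows that the piecewise constant interpolants $u_{h}$ of \eqref{interpol} form a Cauchy net in $C([0,T];L^{1}(\R^{N}))$; the limit $u(t):=\lim_{h\to 0}u_{h}(t)$ is by construction the unique mild solution of \eqref{Pf}. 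Letting $T\to\infty$ patches solutions together.

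The contraction inequality \eqref{L1_contr} is obtained by iterating the step-by-step estimate
$$
\|(u_{h,k}^{(1)}-u_{h,k}^{(2)})_+\|_{L^{1}}-\|(u_{h,k-1}^{(1)}-u_{h,k-1}^{(2)})_+\|_{L^{1}}\le h\,\bigl[u_{h,k}^{(1)}-u_{h,k}^{(2)},\,f_{k,1}^{(h)}-f_{k,2}^{(h)}\bigr]_{+},
$$
which follows by subtracting the two discrete resolvent equations, multiplying by a smooth approximation of $\mathrm{sign}^{+}(u_{h,k}^{(1)}-u_{h,k}^{(2)})$ times the cutoff $\zeta_n$ of Theorem \ref{ellipticexi}, discarding the non-negative monotonicity term, and taking $n\to\infty$; the upper bound is recognized as $h$ times a Sato bracket. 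Summing $k$ from $\lfloor s/h\rfloor+1$ to $\lfloor t/h\rfloor$ and passing to the limit $h\to 0$, using lower semicontinuity of the Sato bracket along the approximating sequence and $L^{1}$-convergence of $f^{(h)}$ to $f$, yields \eqref{L1_contr}. The pointwise comparison statement is an immediate consequence since $[v,w]_{+}\le \|w_{+}\|_{L^{1}}$, so $u_{0,1}\le u_{0,2}$ and $f_{1}\le f_{2}$ force the right-hand side of \eqref{L1_contr} to vanish.

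The main obstacle is not the abstract Crandall--Liggett machinery, which is almost mechanical once $m$-T-accretivity is available, but rather the construction of the $L^{1}$-resolvent itself; that has already been absorbed into Theorem \ref{ellipticexi} through the delicate combination of truncation, anisotropic Marcinkiewicz estimates, and the entropy-solution framework when some $s_{i}\le 1$. The one technical point to watch in the limiting argument for \eqref{L1_contr} is the semicontinuity behavior of the Sato bracket $[\,\cdot\,,\,\cdot\,]_{+}$ under strong $L^{1}$ convergence in both arguments, which is standard and is handled as in \cite[Chapter 10]{Vlibro}.
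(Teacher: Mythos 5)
Your proposal is correct and follows essentially the same route as the paper: realize $\mathcal{A}$ as an operator on $X=L^{1}(\R^{N})$, obtain $m$-T-accretivity from Theorem~\ref{ellipticexi}, and invoke the Crandall--Liggett theorem together with the ITDS scheme to produce the mild solution and the Sato-bracket inequality. The paper states this quite tersely; you usefully fill in the density of the domain (via $J_{\lambda}g\to g$) and the discrete derivation of \eqref{L1_contr}, both of which are the standard supporting arguments left implicit in the paper's proof.
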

\begin{proof}
 In order to apply the abstract theory recalled above, we introduce the nonlinear operator
$\mathcal{A}: D(\mathcal{A})\subset L^{1}(\ren)\rightarrow L^{1}(\ren)$, defined by \eqref{A}
with domain
$$
D(\mathcal{A} ):=\left\{v\in L^1(\ren):\,v_{x_{i}}\in M^{s_{i}}(\ren),\, \mathcal{A}(v)\in L^1(\ren) \right\},
$$
where we recall that $s_i=\frac{N'}{\bar p'}p_i$. By Theorem \ref{ellipticexi} we see that this operator is $T$-accretive on the space $X=L^1(\ren)$. Therefore, we have that there is a unique mild solution $u$ to \eqref{Pf}, obtained as a limit of discrete approximate solutions by the ITDS scheme. Moreover, inequality \eqref{L1_contr} follows.
\end{proof}

We concentrate next in the question of boundedness that will be a consequence of yet another feature of the theory that is important in itself, i.e., symmetrization.\nc

\begin{remark}\label{RemarkIp}
This section also holds under assumption  (H2) and $p_i>1$ making minor changes in the proof of Theorem \ref{ellipticexi}.
\end{remark}


\section{Symmetrization. New comparison results}\label{sec.symm}

In this section we assume that (H2) holds. We want to prove a comparison result based on Schwarz symmetrization. We start by considering the simpler setting of nonlinear elliptic equations posed in a bounded open set of
$\mathbb{R}^N$ with Dirichlet boundary condition  following the classical
paper \cite{Tal76}).  In our case, it is known that if $u$ solves the following stationary anisotropic problem in a bounded domain $\Omega$
\begin{equation}\label{P intro}
\ \left\{
\begin{array}
[c]{ll}%
-\underset{i=1}{\overset{N}{%
{\displaystyle\sum}
}}\left( \left\vert u_{x_{i}}\right\vert ^{p_{i}-2}u_{x_{i}%
}\right)  _{x_{i}}=f\left(  x\right)  & \mbox{  in }\Omega\\[6pt]
u=0 & \mbox{ on }\partial\Omega,
\end{array}
\right.
\end{equation}
then rearrangement methods allow to obtain a pointwise comparison result
for $u$ with respect to the solution of the suitable radially symmetric problem. Thus, in the case of energy solutions when the datum $f$ belongs to the dual space, it is proved in \cite{Cianchi} that if  $\Omega^{\sharp}$ is the ball centered in the origin such
that $|\Omega^{\sharp}|=|\Omega|$ and if $u^{\sharp}$ is the symmetric decreasing
rearrangement of a solution $u$ to problem (\ref{P intro}) then the following holds
 \begin{equation}
u^{\#}\leq U  \text{ \ \
\ \ \ in
\ }\Omega^{\#} \label{comparison}.%
\end{equation}
Here,  $U$ is the radially symmetric solution to the following isotropic problem:
\begin{equation}
\!\!\left\{  \!\!%
\begin{array}
[c]{ll}%
\Lambda\Delta_{\bar{p}}U=f^{\#}\left(
x\right)  \! &
\mbox{  in  }\Omega^{\#}\!\!\vspace{0.2cm}\\
\!\!U=0 & \!\mbox{ on }\partial\Omega^{\#}\!\!,
\end{array}
\!\!\right.  \label{Prob simm intro}%
\end{equation}
where $\overline{p}$ is the
harmonic mean of exponents $p_{1},\dots,p_{N}$, given by formula \eqref{p
bar}, while $f^{\#}$
the symmetric decreasing rearrangement of $f$. The result needs a constant $\Lambda>0$ that has been determined as
\begin{equation}
\Lambda=\frac{2^{\overline{p}}\left(  \overline{p}-1\right)  ^{\overline{p}%
-1}}{\overline{p}^{\overline{p}}}\left[
\frac{\underset{i=1}{\overset{N}{\Pi
}}p_{i}^{\frac{1}{p_{i}}}\left(  p_{i}^{\prime}\right)  ^{\frac{1}%
{p_{i}^{\prime}}}\Gamma(1+1/p_{i}^{\prime})}{\omega_{N}\Gamma(1+N/\overline
{p}^{\prime})}\right]  ^{\frac{\overline{p}}{N}}
\label{lambda}%
\end{equation}
with $\omega_{N}$ the measure of the $N-$dimensional unit ball,
$\Gamma$ the Gamma function, and
$p_{i}^{\prime}=\frac{p_{i}}{p_{i}-1}$ with the usual conventions if
$p_{i}=1$.

We stress that in contrast to the isotropic $p$-Laplacian equation,  not
only the space domain and the data of problem (\ref{P intro}) are
symmetrized with respect to the space variable, but also the
ellipticity condition is subject to an appropriate symmetrization.
Indeed, the diffusion operator in problem (\ref{Prob simm intro}) is the
standard isotropic $\overline{p}-$Laplacian.

\medskip

\subsection{ Main ideas of the parabolic symmetrization}
Now, it is well-known that a the pointwise comparison
(\ref{comparison}) need not hold for nonlinear parabolic equations, not even for the heat equation, and has to be replaced by a comparison of integrals known in the literature as Concentration Comparison, and reads (see
\cite{AdF, Bandle, Va82, Va82b, Va2005})
\begin{equation}
\int_{0}^{s}u^{\ast}(\sigma,t)\;d\sigma\leq\int_{0}^{s}U^{\ast}(\sigma
,t)\;d\sigma\qquad\hbox{in $(0, |\Omega|)$,} \label{integral_into}%
\end{equation}
valid for all fixed $t\in\left(0,T\right)$. Here, $u^{\ast}$ is the one dimensional, decreasing rearrangement
with respect to the space variable of the  weak energy  solution $u$ to the following problem
\begin{equation*}
\left\{
\begin{array}
[c]{ll}%
u_t-\sum_{i=1}^N (|u_{x_i}|^{p_i-2}u_{x_i})_{x_i}=f(x,t) & \quad
\hbox{in $\Omega\times (0, T)$}\\
\\
u(x,t)=0 & \quad\hbox{on $\partial \Omega\times (0, T),$}\\
\\
u(x,0)=u_{0}(x) & \quad\hbox{in $\Omega$}
\end{array}
\right.  \label{Pr}%
\end{equation*}
when the datum belongs to the dual space \nlc and $U^{\ast}$ is the same type of rearrangement of the solution $U$ to the following isotropic
"symmetrized" problem
\begin{equation*}
\left\{
\begin{array}
[c]{ll}%
U_{t}-\Lambda\Delta_{\bar{p}}U =f^{\#}(x,t) & \quad
\hbox{in $\Omega^{\#}\times (0, T)$}\\
\\
U(x,t)=0 & \quad\hbox{on $\partial \Omega^\#\times (0, T)$}\\
\\
U(x,0)=u_{0}^{\sharp}(x) & \quad\hbox{in $\Omega^{\#}$},\\
\end{array}
\right.  \label{symm Pr}%
\end{equation*}
respectively, { with $\Lambda$ defined in \eqref{lambda}, $u_{0}^{\#}$ the symmetric decreasing rearrangement of $u_0$  and $f^{\#}(x,t)$ the symmetric decreasing rearrangement of $f$ with respect to $x$ for $t$
fixed.}

\medskip

Let $u$
be a measurable function on $\R^{N}$ (if $u$ is defined on a bounded domain $\Omega$, we extend $u$ by 0 outside $\Omega$)
fulfilling
\begin{equation*}
\left\vert \{x\in\mathbb{R}^{N}:\left\vert u(x)\right\vert
>t\}\right\vert
<+\infty\text{ \ \ \ for every }t>0. \label{insieme livello di misura finita}%
\end{equation*}
The (Hardy Littlewood) one dimensional \textit{decreasing rearrangement} $u^{\ast}$ of $u$ is defined
as
\[
u^{\ast}(s)=\sup\{t>0:\left\vert \{x\in{\ren}:\left\vert u(x)\right\vert
>t\}\right\vert>s\}\text{ \ for }s\geq0,
\]
and the \textit{symmetric decreasing rearrangement} of $u$ is the
function $u^{\#}:\mathbb{R}^{N}\rightarrow\left[0,+\infty\right[$ given by

\[
u^{\#}(x)=u^{\ast}(\omega_{N}\left\vert x\right\vert
^{N})\text{
\ \ }\hbox{\rm for a.e.}\;x\in{{\mathbb{R}}^{N}.}%
\]

In what follows we need the following order relationship, taken from \cite{Va82}. Given two radially symmetric functions $f, g\in L^1_{loc}(\mathbb{R}^N)$ we say
that $f$ is more concentrated than $g$, $f\succ g$, if for every $R>0$,
$$\int_{B_R(0)}
f(x) dx \geq \int_{B_R(0)}
g(x) dx.$$

\subsection{Comparison result for stationary problems in the whole space with a lower-order term}

A lack of pointwise comparison already arises in elliptic equations with lower order terms, which have a close relationship with parabolic equations. See  in this respect \cite{Va2005} where the isotropic case is treated. Indeed, by the Crandall-Liggett implicit discretization scheme   \cite{CL71} (see below or \cite{Vlibro}), the parabolic comparison can be obtained from  a similar comparison result for the following stationary problem with a lower-order term:
\begin{equation}\label{Pf RN}
\left\{
\begin{array}
[c]{ll}%
\sum_{i=1}^N (|u_{x_i}|^{p_i-2}u_{x_i})_{x_i}+ \mu u=f & \quad
\hbox{in $\mathbb{R}^N$}\\[6pt]
{u(x)\rightarrow0} & \quad \hbox{ as } |x|\rightarrow\infty
\end{array}
\right.
\end{equation}
for arbitrary $\mu>0$.

\begin{theorem}\label{Thm Comp bis}
Let $u$ be the solution of problem \eqref{Pf RN} with $f\in L^1(\mathbb{R}^N)$ and let
$U$ be the solution of the following isotropic problem
\begin{equation}\label{Pf RN sim}
\left\{
\begin{array}
[c]{ll}%
-\Lambda\Delta_{\bar{p}}U +\mu U=g
 & \quad
\hbox{in $\mathbb{R}^N$}\\[6pt]
{u(x)\rightarrow0} & \quad \hbox{ as } |x|\rightarrow\infty
\end{array}
\right.
\end{equation}
with $g=g^\#$. If
$f^\#\prec g,$
then we have \ $u^\#\prec U$.
\end{theorem}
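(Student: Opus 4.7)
My plan is to approximate the whole-space problem by Dirichlet problems on large balls $B_R$, use Cianchi's sharp anisotropic rearrangement technique on each ball to derive an integro-differential inequality for the distribution function of $u_R$, compare with the corresponding equality for the radial solution $U_R$ of the symmetrized problem on $B_R$, and then let $R\to+\infty$. The bounded-domain version of Theorem \ref{ellipticexi} together with $L^1$ stability of the resolvent provides the limit passage, while continuity of the decreasing rearrangement in $L^1$ transfers the concentration comparison to the limit.

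For fixed $R$, integrating the anisotropic equation $-\mathcal{A} u_R + \mu u_R = f\chi_{B_R}$ over the super-level set $\{u_R > t\}$, and applying the divergence theorem together with the coarea formula, one obtains, writing $m(t)=|\{u_R>t\}|$,
\begin{equation*}
\sum_{i=1}^N \int_{\{u_R = t\}} \frac{|(u_R)_{x_i}|^{p_i}}{|\nabla u_R|}\, d\mathcal H^{N-1} + \mu \int_{\{u_R > t\}} u_R\, dx = \int_{\{u_R > t\}} f\, dx.
\end{equation*}
By the Hardy--Littlewood inequality the right-hand side is bounded above by $\int_0^{m(t)} f^{\ast}(\sigma)\, d\sigma$. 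The decisive step is Cianchi's sharp anisotropic inequality, which after a H\"older interpolation among the different $p_i$ yields
\begin{equation*}
\sum_{i=1}^N \int_{\{u_R = t\}} \frac{|(u_R)_{x_i}|^{p_i}}{|\nabla u_R|}\, d\mathcal H^{N-1} \;\geq\; \Lambda\, (N \omega_N^{1/N})^{\bar p}\, \frac{m(t)^{\bar p(N-1)/N}}{(-m'(t))^{\bar p - 1}},
\end{equation*}
with the same $\Lambda$ appearing in \eqref{lambda}. The change of variables $s = m(t)$, $u_R^{\ast}(s)=t$ converts the two estimates into a single integro-differential inequality for $u_R^{\ast}$.

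An identical computation applied to the radial problem $-\Lambda \Delta_{\bar p} U_R + \mu U_R = g\,\chi_{B_R}$ yields the \emph{same} relation with equality, because for a radial decreasing solution the level sets are spheres and the H\"older step saturates, producing precisely the $\bar p$-Laplacian with constant $\Lambda$. Setting $\phi(s)=\int_0^s u_R^{\ast}(\sigma)\,d\sigma$ and $\Phi(s)=\int_0^s U_R^{\ast}(\sigma)\,d\sigma$, and using the hypothesis $\int_0^s f^{\ast} \leq \int_0^s g$, the comparison reduces to showing $\phi(s)\le \Phi(s)$ for every $s>0$. This is established via the mass-concentration maximum principle used in the authors' previous works, applied to the two integro-differential relations satisfied by $\phi$ (inequality) and $\Phi$ (equality). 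Letting $R\to+\infty$ gives $u^\#\prec U$ on the whole space.

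The main obstacle is the presence of the lower-order term $\mu u_R$: it prevents reducing the rearranged equation to a first-order ODE for $u_R^{\ast}$ and instead produces an integro-differential relation, so the comparison cannot be carried out pointwise and must proceed on the level of the integrated rearrangements $\phi, \Phi$ via a Gronwall-type argument tailored to integral inequalities. A secondary technical point is that the constant $\Lambda$ produced by Cianchi's estimate must match precisely the diffusion coefficient in the symmetrized problem \eqref{Pf RN sim}; this matching is exactly the content of Cianchi's theorem and is what makes the symmetrized equation a genuine $\bar p$-Laplacian problem with the explicit sharp constant \eqref{lambda}.
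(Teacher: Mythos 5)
Your proof is correct and follows essentially the same route as the paper's, which simply cites Theorem~3.6 of \cite{AdF} (adapted to the whole space) and adds an $L^1$-density argument: the core is the level-set/coarea machinery combined with Cianchi's sharp anisotropic inequality to produce the constant $\Lambda$, followed by a mass-concentration comparison on the integrated rearrangements to absorb the lower-order term $\mu u$. The only structural difference is that you approximate by Dirichlet problems on balls $B_R\uparrow\R^N$ and then let $R\to\infty$, whereas the paper works directly on $\R^N$ with smooth data and approximates the datum in $L^1$ using the $L^1$-contractivity of $u\mapsto u^*$; both are standard and interchangeable here.
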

\noindent {\sl Proof.}
We can argue as in Theorem 3.6 of \cite{AdF} but considering the problem defined in whole space $\mathbb{R}^N$ and with a smooth datum. In order to obtain the result when the datum is in $L^1(\mathbb{R}^N)$ we argue by approximation (see section \ref{existence}) and we pass to the limit  in the concentration estimate, recalling that the rearrangement application $u\rightarrow u^*$ is a contraction in $L^r(\mathbb{R}^N)$ for any $r\geq1$ (see \cite{K2006}).
 \qed

\subsection{Statement and proof of the parabolic comparison result}

Now we are in position to state a comparison result for problem  \eqref{Pf}. We set $Q:=\R^{N}\times (0,\infty)$.

\begin{theorem}\label{Thm Comp}
Let $u$ be the mild solution of problem \eqref{Pf}  with initial data $u_0\in L^1(\mathbb{R}^N)$ and $f\in L^1(Q)$. Let
$U$ be the mild solution to the isotropic parabolic problem
\begin{equation} \label{eqcauchysymm}
\left\{
\begin{array}
[c]{lll}%
U_t-\Lambda\Delta_{\bar{p}}U=g  &  & \text{in }Q
\\[6pt]
U(x,0)=U_{0}(x) &  & x\in\R^{N}\,
\end{array}
\right. %
\end{equation}
with a nonnegative rearranged initial datum $U_0\in L^{1}(\R^{N})$  and nonnegative source $g\in L^{1}(Q)$ which is rearranged w.r. to $x\in \R^{N}$. Assume moreover that
\begin{itemize}
\item[i)] $u_0^\#\prec U_0$,
\item[ii)] $f^\#(\cdot, t)\prec g(\cdot,t)$ for every $t\geq0$.
\end{itemize}
Then, for every $t\geq0$
\begin{equation}\label{conc.comp}u^\#(\cdot, t) \prec U(\cdot, t).
\end{equation}
In particular, for every $q \in[1,\infty]$ we have comparison of $L^q$ norms,
\begin{equation}\label{norm comp}
\|u(\cdot,t)\|_q \leq\|U(\cdot,t)\|_q
\end{equation}
\end{theorem}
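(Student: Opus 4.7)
The plan is to reduce the parabolic comparison to the elliptic one (Theorem \ref{Thm Comp bis}) via the Crandall--Liggett implicit time discretization scheme used in Section \ref{existence}, and then pass to the limit. Since both mild solutions are uniform limits of piecewise constant interpolants built from resolvent iterations, it suffices to compare the discrete iterates at every step and transfer the concentration inequality to the limit.

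Fix a time step $h>0$ and set $u_{h,0}=u_0$, $U_{h,0}=U_0$, with consistent discretizations $f_k^{(h)}$ of $f$ and $g_k^{(h)}$ of $g$ on $[(k-1)h,kh]$, where we may assume the latter is radially symmetric decreasing in $x$ for each $k$. Then $u_{h,k}$ solves the stationary equation
\[
\mathcal{A}(u_{h,k})+\tfrac{1}{h}\,u_{h,k}=\tfrac{1}{h}\,u_{h,k-1}+f_k^{(h)},
\]
and $U_{h,k}$ solves the analogous symmetrized equation with $-\Lambda\Delta_{\bar p}$ in place of $\mathcal{A}$ and right-hand side $\frac{1}{h}U_{h,k-1}+g_k^{(h)}$. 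By uniqueness for the symmetric problem (Theorem \ref{ellipticexi} applied to the isotropic $\bar p$--Laplacian), $U_{h,k}$ is radially symmetric decreasing whenever its datum is. I would then argue by induction on $k$: assuming $u_{h,k-1}^{\#}\prec U_{h,k-1}$, use the sub-additivity of rearrangements $(v_1+v_2)^{\#}\prec v_1^{\#}+v_2^{\#}$ together with the hypotheses (i)--(ii) to obtain
\[
\Bigl(\tfrac{1}{h}u_{h,k-1}+f_k^{(h)}\Bigr)^{\#}\prec \tfrac{1}{h}u_{h,k-1}^{\#}+(f_k^{(h)})^{\#}\prec \tfrac{1}{h}U_{h,k-1}+g_k^{(h)}.
\]
Theorem \ref{Thm Comp bis} then yields $u_{h,k}^{\#}\prec U_{h,k}$, closing the induction.

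For a fixed $t>0$, the discrete interpolants $u_h(t)$ and $U_h(t)$ defined in \eqref{interpol} converge in $L^1(\mathbb{R}^N)$ to $u(\cdot,t)$ and $U(\cdot,t)$ respectively as $h\to 0$. Since the rearrangement map is an $L^1$ contraction and since for each $R>0$ the functional $v\mapsto \int_{B_R(0)} v^{\#}\,dx$ is $L^1$--continuous, the concentration inequality $u_h^{\#}(\cdot,t)\prec U_h(\cdot,t)$ passes to the limit and yields \eqref{conc.comp}. Finally, \eqref{norm comp} follows from the Hardy--Littlewood--P\'olya principle: for any convex non-decreasing $\Phi$ with $\Phi(0)=0$, concentration comparison between nonnegative rearranged functions implies $\int \Phi(u^{\#})\,dx\le \int \Phi(U)\,dx$; applying this with $\Phi(s)=s^q$ for $1\le q<\infty$ and a direct level-set argument for $q=\infty$ gives the $L^q$ bound.

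The main technical obstacle is twofold: first, making sure the sub-additivity step $(v_1+v_2)^{\#}\prec v_1^{\#}+v_2^{\#}$ is applied only between a generic function and an already-rearranged one (so that the elliptic comparison Theorem \ref{Thm Comp bis} is applicable after one iteration); this is guaranteed by the inductive preservation of radial symmetry for the $U_{h,k}$. Second, the limit $h\to 0$ requires $L^1$ convergence of the discrete approximants uniformly on compact time intervals, which is exactly the content of the Crandall--Liggett theorem as used in Theorem \ref{parabolicexistence}, together with a standard discretization $g_k^{(h)}$ of $g$ preserving the concentration inequality (ii) at each step, achievable by time averaging on $[(k-1)h,kh]$ since concentration is preserved under integration in the time variable.
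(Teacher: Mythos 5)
Your argument is correct and follows essentially the same strategy as the paper: an implicit time discretization reducing to the stationary comparison of Theorem~\ref{Thm Comp bis}, an inductive chain of concentration inequalities at each time step, and passage to the limit $h\to 0$ using the $L^1$-continuity of concentration functionals. The only cosmetic difference is that the paper routes the inductive step through an auxiliary radially decreasing iterate $\widetilde u_{h,k}$ (two applications of the elliptic comparison), whereas you obtain $(u_{h,k-1}+f_k^{(h)})^{\#}\prec U_{h,k-1}+g_k^{(h)}$ directly from sub-additivity of rearrangement plus additivity of $\prec$, and apply Theorem~\ref{Thm Comp bis} once.
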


Note that the norms of \eqref{norm comp}
can also be infinite for some or all values of $q$.

\begin{proof}
According to what explained in Theorem \ref{parabolicexistence}, we use the implicit time discretization scheme to obtain the mild solutions to the parabolic problems. For each time $T>0$, we divide the time interval $[0,T]$ in $n$ subintervals
$(t_{k-1},t_{k}]$, where $t_{k}=kh$ and $h=T/n$ and we perform a discretization of $f$ and $g$ adapted to the time mesh $t_k=kh$, let us call
them $\{f_k^{(h)}\},\,\{g_k^{(h)}\} $, so that the piecewise constant (or
linear in time) interpolations of this  sequences give the function $f^{(h)}(x,t)$, $g^{(h)}(x,t)$ such that
$\|f-f^{(h)}\|_1\to 0$ and $\|g-g^{(h)}\|_1\to 0$  as $h\to 0$. We can define $f_k^{(h)},\,g_k^{(h)} $ in this way
\[
f_k^{(h)}(x)=\frac{1}{h}\int_{(k-1)h}^{kh}f(x,t)dt,\quad g_k^{(h)}(x)=\frac{1}{h}\int_{(k-1)h}^{kh}g(x,t)dt.
\]
Now we construct the function $u_{h}$ which is piecewise constant in each
interval
$(t_{k-1},t_{k}]$,
by
\[
u_{h}(x,t)=
\left\{
\begin{array}
[c]{lll}%
u_{h,1}(x)  &  & if\,\,t\in[0,t_{1}]%
\\[6pt]
u_{h,2}(x)  &  & if\,\,t\in(t_{1},t_{2}]
\\
[6pt]
\cdots
\\
[6pt]
u_{h,n}(x)  &  & if\,\,t\in(t_{n-1},t_{n}]
\end{array}
\right. %
\]
where $u_{h,k}$ solves the equation
\begin{equation}
h\mathcal{A}(u_{h,k})+u_{h,k}=u_{h,k-1}+f_k^{(h)}\label{eq.18}
\end{equation}
with the initial value $u_{h,0}=u_{0}$.
Similarly, concerning the symmetrized problem \eqref{eqcauchysymm}, we define the piecewise constant function $U_{h}$ by \[
U_{h}(x,t)=
\left\{
\begin{array}
[c]{lll}%
U_{h,1}(x)  &  & if\,t\in[0,t_{1}]%
\\
[6pt]
U_{h,2}(x)  &  & if\,t\in(t_{1},t_{2}]
\\
[6pt]
\cdots
\\
[6pt]
U_{h,n}(x)  &  & if\,t\in(t_{n-1},t_{n}]
\end{array}
\right. %
\]
where $U_{h,k}(x)$ solves the equation
\begin{equation}
-h\Delta_{\bar{p}}U_{h,k}+U_{h,k}=U_{h,k-1}+g_k^{(h)}\label{eq.20}
\end{equation}
with the initial value $U_{h,0}=U_{0}$.
Our goal is now to compare the solution $u_{h,k}$ to \eqref{eq.18} with the solution \eqref{eq.20} by means of mass concentration comparison. We proceed by induction. Using Theorem
\ref{Thm Comp bis}, we get
\[
u^{\#}_{h,1}\prec U_{h,1}.
\]
If we assume by induction that $u^{\#}_{h,k-1}\prec U_{h,k-1}$ and call $\widetilde{u}_{h,k}$ the (radially decreasing) solution to the
equation
\begin{equation*}
h\mathcal{A}(\widetilde{u}_{h,k})+\widetilde{u}_{h,k}=u^{\#}_{h,k-1}+(f_k^{(h)})^{\#},
\end{equation*}
Theorem \ref{Thm Comp bis} again implies
\begin{equation}\label{massconcdiscrete}
u^{\#}_{h,k}\prec \widetilde{u}_{h,k}\prec U_{h,k}\,,
\end{equation}
hence \eqref{massconcdiscrete} holds for all $k=1,\ldots,n$. Hence the definitions of $u_{h}$ and $U_{h}$ immediately imply
\begin{equation}
u_{h}(\cdot,t)^{\#}\prec U_{h}(\cdot,t))\label{massconcparab}
\end{equation}
for all times $t$.
Since we have
\[
u_{h}\rightarrow u,\quad U_{h}\rightarrow U\,\, \text{uniformly},
\]
passing to the limit in \eqref{massconcparab} we get the result.
\end{proof}


\section{Boundedness of solutions.}\label{sec.bdd}

This result is usually known as the $L^1$-$L^\infty$ smoothing effect. We
assume conditions (H2) and (H3).

\begin{theorem}\label{L1LI}  If $u_0\in L^1(\mathbb{R}^N)$, then the mild
solution to \eqref{APL} with initial condition \eqref{IC} satisfies the $L^\infty$ bound:
 \begin{equation}\label{Linfty-L1}
 \|u(t)\|_\infty\leq C t^{-\alpha}\|u_0\|_1^{\bar{p}\alpha/N}\quad \forall t>0,
 \end{equation}
where the exponent $\alpha$ is just the one defined in \eqref{alfa} and $C=C(N,\bar p)$.
 \end{theorem}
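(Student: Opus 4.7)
The strategy is to exploit the symmetrization results of Section \ref{sec.symm} in order to reduce the anisotropic smoothing question to the well-studied isotropic $\bar p$-Laplacian evolution, for which the $L^1$–$L^\infty$ smoothing effect is classical. Under \eqref{p_isum} we have $\bar p > p_c(N)$, so the symmetrized equation sits precisely in the supercritical fast-diffusion regime where decay estimates hold with the correct self-similar exponent $\alpha$ of \eqref{alfa}.

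I would first apply Theorem \ref{Thm Comp} (with $f\equiv g\equiv 0$ and $U_0=u_0^\#$) to compare $u$ with the mild solution $U$ of the isotropic Cauchy problem
\[
U_t=\Lambda\,\Delta_{\bar p} U \quad\text{in } Q, \qquad U(\cdot,0)=u_0^\#,
\]
where $\Lambda$ is the constant defined in \eqref{lambda}. The hypotheses $u_0^\#\prec U_0$ and $f^\#\prec g$ hold trivially, so the theorem yields the concentration comparison $u^\#(\cdot,t)\prec U(\cdot,t)$ for every $t>0$, and hence, via \eqref{norm comp} with $q=\infty$,
\[
\|u(t)\|_\infty \leq \|U(t)\|_\infty.
\]

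Next, since $U$ solves the isotropic fast-diffusion $\bar p$-Laplacian equation (the factor $\Lambda$ is absorbed by the time rescaling $\tau=\Lambda t$), I invoke the classical $L^1$–$L^\infty$ smoothing estimate for that equation, see e.g.\ \cite{DiB93,VazSmooth}, which gives
\[
\|U(t)\|_\infty \leq C(N,\bar p)\,(\Lambda t)^{-\alpha}\,\|u_0^\#\|_1^{\bar p\alpha/N},
\]
with $\alpha=N/(N\bar p-2N+\bar p)$, exactly the exponent of \eqref{alfa}. Since $\|u_0^\#\|_1=\|u_0\|_1$, the combination of the two displays delivers \eqref{Linfty-L1}.

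The only genuinely delicate point is the first step: one must ensure that the symmetrization comparison of Section \ref{sec.symm} really does apply at the level of $L^1$ mild solutions of the Cauchy problem on the whole space, rather than just for energy solutions on bounded domains. This is handled by the implicit time discretization scheme of Section \ref{existence} together with $L^1$ approximation of the initial data, and is precisely why Theorem \ref{Thm Comp} was stated for the mild-solution framework. The second step is entirely standard and can be deduced either by a Moser-type iteration on the energy identity \eqref{firstenergy}, or, in the cleanest way, by explicit comparison with the Barenblatt profile \eqref{Fp<2} in the orthotropic case $p_i\equiv \bar p$; no additional ingredient is needed beyond (H2), which ensures both $\alpha>0$ and $\bar p>p_c$.
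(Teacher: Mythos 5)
Your proof is correct and takes essentially the same route as the paper: both apply the concentration comparison of Theorem~\ref{Thm Comp} to reduce to the isotropic $\bar p$-Laplacian and then invoke the Barenblatt-driven $L^1$--$L^\infty$ decay for that equation, which is precisely where the exponent $\alpha$ of \eqref{alfa} and the condition $\bar p>p_c$ from (H2) enter. The only superficial difference is that the paper takes the time-delayed Barenblatt $B(\cdot,t+\tau)$ directly as the dominating profile (approximating around the Dirac mass), whereas you compare with the isotropic solution $U$ having data $u_0^\#$ and quote the known isotropic smoothing effect — a slightly cleaner packaging of the same argument.
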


\noindent {\sl Proof.}
It is clear that the worst case with respect to the symmetrization and concentration comparison in the class of solutions with the same initial mass $M$ is just the
Barenblatt solution $B$ of the isotropic $\bar p-$laplacian with Dirac mass initial data, \emph{i.e.} $u_0(x)=M\delta(x)$. We are thus reduced to calculate the $L^\infty$ norm of $B$:
\begin{equation*}
\|B\|_\infty=C(N,\overline{p})t^{-\alpha}\|u_0\|_1^{\bar{p}\alpha/N}.
\end{equation*}
Actually, there is a difficulty in taking $B$ as a worst case in the comparison,
namely that $B(x,0)$ is not a function but a Dirac mass. We overcome the difficulty by approximation.
We take first a solution with bounded initial data, $u_0\in L^1(\mathbb{R}^N)\cap L^\infty(\mathbb{R}^N)$. We then replace $B(x,t)$ by a slightly delayed function $B(x,t+\tau)$, which is a solution with initial data $B(x,\tau)$, bounded but converging to $M\delta(x)$ as $\tau\rightarrow0$. It is
then clear that for a small $\tau>0$ such solution is more concentrated than $u_0$. From the comparison theorem we get
$$
|u(x,t)|\leq \|B(\cdot,t+\tau)\|_{\infty} = C(N,\bar p)\,M^{\bar{p}\alpha/N}\,(t+\tau)^{-\alpha}
$$
which of course implies \eqref{Linfty-L1}. The result for general $L^1$ data follows by approximation and density once it is proved for bounded $L^1$ functions.
 \qed

\medskip

\noindent {\bf Remark.}\label{Remark boundness} From Proposition \eqref{decay of the $L^p$} and Theorem \eqref{L1LI} we have that for $u_0\in L^1\cap L^\infty$, the rescaled evolution solution $v$ \eqref{NewVariables} is uniformly bounded in time.



\section{Anisotropic upper barrier construction}\label{sec.upp}

The construction of an upper barrier in an outer domain will play a key role in the proof of existence of the fundamental solution in Section \ref{sec.ex.ssfs}. We assume (H1) and (H2) hold as in the Introduction.

\begin{proposition}\label{P1}
The function
\begin{equation}\label{outer.barr}
\overline{F}(y)= \left( \sum_{i=1}^N \gamma_i|y_i|^{\frac{p_i}{2-p_i}} \right)^{-1}
\end{equation}
with
\begin{equation}\label{gammai}
\gamma_i
\leq\left[\frac{\alpha}{N}\left(\min_i\{\sigma_i\frac{p_i}{2-p_i}\}-1\right)\frac{1}{2(p_i-1)}
\left(\frac{p_i}{2-p_i}\right)^{-p_i}\right]^{\frac{1}{2-p_i}}
\end{equation}
is a weak supersolution to \eqref{StatEq} in ${\mathbb{R}^N\setminus B_R(0)}$ and a classical supersolution  in $\mathbb{R}^N\setminus\left\{0\right\}$, with $B_R(0)$ being a  ball of radius $R>0$. Moreover, $\overline{F}\in L^1(\mathbb{R}^N\setminus B_R(0))$ .
\end{proposition}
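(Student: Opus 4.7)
Write $\overline F = 1/\Phi$ with $\Phi(y) := \sum_{i=1}^N \gamma_i |y_i|^{q_i}$ and $q_i := p_i/(2-p_i)$, and let $\mathcal L[\overline F] := \sum_i \bigl[(|\overline F_{y_i}|^{p_i-2}\overline F_{y_i})_{y_i} + \alpha\sigma_i (y_i \overline F)_{y_i}\bigr]$ denote the left-hand side of \eqref{StatEq}. The strategy is to compute $\mathcal L[\overline F]$ pointwise on $\R^N\setminus\{0\}$ (away from the coordinate hyperplanes, where everything is classical), split it into a manifestly nonpositive part and a bad positive part, and show that the choice \eqref{gammai} kills the bad terms. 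Direct differentiation yields
\begin{equation*}
|\overline F_{y_i}|^{p_i-2}\overline F_{y_i} = -(\gamma_i q_i)^{p_i-1}|y_i|^{\xi_i} y_i \,\Phi^{-2(p_i-1)},\qquad \xi_i := q_i(p_i-1) - p_i,
\end{equation*}
and then
\begin{equation*}
\bigl(|\overline F_{y_i}|^{p_i-2}\overline F_{y_i}\bigr)_{y_i} = -(\gamma_i q_i)^{p_i-1}(\xi_i+1)|y_i|^{\xi_i}\Phi^{-2(p_i-1)} + 2(p_i-1)(\gamma_i q_i)^{p_i}|y_i|^{\xi_i+q_i}\Phi^{1-2p_i}.
\end{equation*}
Since $\xi_i+1 = 2(p_i-1)^2/(2-p_i) > 0$, the first summand is nonpositive while the second is positive and must be absorbed.

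Next I would handle the drift. A direct calculation gives $\alpha\sigma_i(y_i\overline F)_{y_i} = \alpha\sigma_i\Phi^{-1} - \alpha\sigma_i q_i \gamma_i |y_i|^{q_i}\Phi^{-2}$, and summing over $i$ using $\sum_i\sigma_i = 1$ together with the pointwise bound $\sum_i \sigma_i q_i \gamma_i |y_i|^{q_i} \ge (\min_j \sigma_j q_j)\,\Phi$ yields
\begin{equation*}
\sum_i \alpha\sigma_i(y_i\overline F)_{y_i} \le -\alpha\,\bigl(\min_j \sigma_j q_j - 1\bigr)\Phi^{-1}.
\end{equation*}
A short algebraic computation using \eqref{alfa}--\eqref{ai} gives $\sigma_i q_i = \frac{(N+1)\bar p - N p_i}{N(2-p_i)}$, which exceeds $1$ exactly when $\bar p > p_c$, i.e.\ under (H2). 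This is where (H2) enters decisively and I expect it to be the main conceptual observation of the proof: it is what makes the drift block strictly negative.

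Then I would drop the nonpositive diffusion term and absorb the bad positive piece. Using $\gamma_i|y_i|^{q_i} \le \Phi$ together with the identity $\xi_i+q_i = 2q_i(p_i-1)$, the positive term is bounded by $2(p_i-1)(\gamma_i q_i)^{p_i}|y_i|^{\xi_i+q_i}\Phi^{1-2p_i} \le 2(p_i-1)q_i^{p_i}\gamma_i^{2-p_i}\Phi^{-1}$. Collecting everything,
\begin{equation*}
\mathcal L[\overline F] \le \Phi^{-1}\Bigl[-\alpha(\min_j \sigma_j q_j - 1) + \sum_{i=1}^N 2(p_i-1)q_i^{p_i}\gamma_i^{2-p_i}\Bigr].
\end{equation*}
This is $\le 0$ provided $2(p_i-1)q_i^{p_i}\gamma_i^{2-p_i} \le \frac{\alpha}{N}(\min_j\sigma_j q_j - 1)$ for every $i$; solving for $\gamma_i$ recovers exactly \eqref{gammai}. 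The pointwise inequality then passes to a weak supersolution statement on $\R^N \setminus B_R(0)$ upon integration against any nonnegative test function, since the fluxes and drift are locally integrable there.

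Finally, for $\overline F \in L^1(\R^N \setminus B_R(0))$ I would use the anisotropic change of variables $s_i = \gamma_i|y_i|^{q_i}$, followed by simplex coordinates $s_i = \rho\tau_i$ with $\sum_i\tau_i = 1$. The Jacobian produces $dy \sim \rho^{\sum_i 1/q_i - 1}\,d\rho\,d\tau$ (with an angular density integrable on the simplex since $1/q_i > 0$), so the integral reduces to $\int_{\rho_0}^{\infty}\rho^{\sum_i 1/q_i - 2}\,d\rho$, which converges iff $\sum_i 1/q_i < 1$. Since $\sum_i 1/q_i = \sum_i (2-p_i)/p_i = 2\sum_i 1/p_i - N$, this is exactly (H2). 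The main obstacle I anticipate is the exponent bookkeeping and the sign analysis of the twice-differentiated diffusion flux; once the identities $\xi_i+1 = 2(p_i-1)^2/(2-p_i) > 0$ and $\xi_i+q_i = 2q_i(p_i-1)$ are in place, the remainder is a direct chain of elementary inequalities.
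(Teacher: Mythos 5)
Your proposal is correct and follows essentially the same route as the paper: differentiate the flux, drop the manifestly nonpositive term, bound the positive remainder and the drift against $\Phi^{-1}$ via $\gamma_i|y_i|^{q_i}\le\Phi$ and $\sum_i\sigma_i q_i\gamma_i|y_i|^{q_i}\ge(\min_j\sigma_j q_j)\,\Phi$, and solve for $\gamma_i$. The only difference is in presentation — you spell out the computation $\sigma_i q_i = \frac{(N+1)\bar p - Np_i}{N(2-p_i)}>1\iff\bar p>p_c$ (which the paper only asserts as eq.~\eqref{33}) and you replace the citation of \cite[Lemma~2.2]{SJ05} for the $L^1$ claim with a direct anisotropic change of variables reducing it to $\sum_i(2-p_i)/p_i<1$, i.e.\ exactly (H2).
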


\noindent {\sl Proof of Proposition \ref{P1}}.
We observe that from our hypotheses $1<p_i<2$ and (H2)  and the value of
$\alpha$ and $\sigma_i$ guarantee that
\begin{equation}\label{33}
\frac{2-p_i}{p_i}<\sigma_i,
\end{equation}
that gives the summability outside a ball centered in the origin (see \cite[Lemma 2.2]{SJ05}). Note that $p_i/(2-p_i)\geq1$.
Let $\gamma_i$ some positive constants that we will choose later. Denoting $
X=\sum_{j=1}^N\gamma_j|y_j|^{p_j/(2-p_j)}$, for $y\in\mathbb{R}^N\setminus\cup_{i=1}^N\{y\in \mathbb{R}^N:y_i=0\}$ we have
\begin{equation*}
\begin{split}
I:=&\sum_{i=1}^{N}\left[(|\overline{F}_{y_i}|^{p_i-2}\overline{F}_{y_i})_{y_i}+a_i\left( y_i \overline{F}\right)_{y_i}
\right]
\\
&\leq
\sum_{i=1}^{N} 2(p_i-1)\left(\frac{p_i\gamma_i}{2-p_i}\right)^{p_i}X^{-2p_i+1}|y_i|^{2p_i\frac{p_i-1}{2-p_i}}
+\alpha X^{-1}-X^{-2}\sum_{i=1}^{N}\alpha \sigma_i\gamma_i\frac{p_i}{2-p_i}|y_i|^{\frac{p_i}{2-p_i}}
\\
&=X^{-1}\left[
\sum_{i=1}^{N} 2(p_i-1)\left(\frac{p_i\gamma_i}{2-p_i}\right)^{p_i}X^{-2p_i+2}|y_i|^{2p_i\frac{p_i-1}{2-p_i}}
+\alpha -X^{-1}\sum_{i=1}^{N}\alpha \sigma_i\gamma_i\frac{p_i}{2-p_i}|y_i|^{\frac{p_i}{2-p_i}}
\right]
\\
&\leq
X^{-1}\left[
\sum_{i=1}^{N} 2(p_i-1)\left(\frac{p_i\gamma_i}{2-p_i}\right)^{p_i}X^{-2(p_i-1)}|y_i|^{2p_i\frac{p_i-1}{2-p_i}}
+\alpha\left(1-\min_i\{\sigma_i\frac{p_i}{2-p_i}\}\right)
\right]
\end{split}
\end{equation*}
Since for every $i$ we have $$\gamma_i|y_i|^{p_i/(2-p_i)}\leq\sum_{j=1}^N\gamma_j|y_j|^{p_j/(2-p_j)}=X,$$ it follows that $$X^{-2(p_i-1)}\leq\gamma_i^{-2(p_i-1)}|y_i|^{2p_i(1-p_i)/(2-p_i)},$$ then
\begin{equation*}
I
\leq
X^{-1}
\sum_{i=1}^{N} \left[2(p_i-1)\left(\frac{p_i}{2-p_i}\right)^{p_i}\gamma_i^{2-p_i}
+\frac{\alpha}{N}\left(1-\min_i\{\sigma_i\frac{p_i}{2-p_i}\}\right)\right],
\end{equation*}
where $1-\min_i\{\sigma_i\frac{p_i}{2-p_i}\}<0$ by \eqref{33}. In order to conclude that $I\leq0$ it is enough to show that
\begin{equation*}
2(p_i-1)\left(\frac{p_i}{2-p_i}\right)^{p_i}\gamma_i^{2-p_i}
+\frac{\alpha}{N}\left(1-\min_i\{\sigma_i\frac{p_i}{2-p_i}\}\right)\leq0
\end{equation*}
for every $i=1,..,N$, i.e. \eqref{gammai}. {It is easy to check that computations works  for $y\in\mathbb{R}^N\setminus\left\{0\right\}$}. Finally we stress that
$\overline{F}_{y_i}\in L^{p_i}(\mathbb{R}^N\setminus B_R(0))$ with $R>0$
and then we can easy conclude that $\overline{F}$ is a weak super-solution as well.
\qed

{
\begin{remark}
We stress that $\overline{F}$ is a weak supersolution to \eqref{StatEq} in $\mathbb{R}^N\setminus \{\sum_{j=1}^N\gamma_j|y_j|^{p_j/(2-p_j)}\leq R\}$ and  belongs to $L^1(\mathbb{R}^N\setminus\{\sum_{j=1}^N\gamma_j|y_j|^{p_j/(2-p_j)}\leq R\})$ for any $R>0$. Moreover if $F_*$ is the value
of $\overline{F}$ on $\{\sum_{j=1}^N\gamma_j|y_j|^{p_j/(2-p_j)}=1/F_*\}$, then $\min\left\{\overline{F},F_*\right\}$ agrees with $\overline{F}$ on $\{\sum_{j=1}^N\gamma_j|y_j|^{p_j/(2-p_j)}\geq 1/F_*\}$ and with $F_*$ on $\{\sum_{j=1}^N\gamma_j|y_j|^{p_j/(2-p_j)}< 1/F_*\}$.
\end{remark}
}

We are ready to prove a comparison theorem that is needed in the proof of
existence of the self-similar fundamental solution. We set as a barrier the truncation of the supersolution $\overline{F}(y)$ given in \eqref{outer.barr}. The proof is similar to \cite[Theorem 3.2]{FVV2020} but for the sake of completeness we include here the details.

\begin{theorem}[Barrier comparison]\label{thm.barr} For any $M>0$ and $L_1>0$, there exists $F_*$ such that if $v_0(y)\ge 0$ is a $L^1$ bounded function such that imposing
\begin{itemize}
\item[(i)] $v_0(y)\le L_1$  a.e. in $\ren$
\item[(ii)] $\int v_0(y)\,dy\le M$
\item [(iii)]  $v_0(y)\le G_{M,L_1}(y)$ a.e in $\ren$
\end{itemize}
where $G_{M,L_1}=\min\left\{\overline{F},F_*\right\}$ is the truncation
of $\overline{F}(y)$ given in \eqref{outer.barr} at level $F_*$,
then
\begin{equation}\label{outer.comp}
v(y,\tau)\le G_{M,L_1}(y) \quad \mbox{ for } a.e. \ y\in \ren, \ \tau>\tau_0.
\end{equation}
where $v(y,\tau)$ solves \eqref{APLs} with initial datum $v_0(y)$.
\end{theorem}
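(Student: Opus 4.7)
The plan is to combine a global $L^\infty$ bound on the rescaled solution $v$ with a parabolic comparison against $\overline{F}$ on the exterior region where $\overline{F}$ is a classical supersolution of the rescaled equation \eqref{APLs}. The truncation at level $F_*$ converts the singular profile $\overline{F}$ into a bounded barrier, and the genuine supersolution property is only needed outside the truncation region.

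First I would fix $F_*$ via the remark after Theorem \ref{L1LI}: since $v_0 \in L^1 \cap L^\infty$ with $\|v_0\|_1 \le M$ and $\|v_0\|_\infty \le L_1$, the rescaled solution satisfies $\|v(\cdot,\tau)\|_\infty \le C_*(M,L_1)$ uniformly for $\tau \ge \tau_0$. Choosing $F_* \ge C_*$ so that $\{\overline{F} = F_*\}$ is the prescribed truncation level in $G_{M,L_1}$, we immediately get $v(y,\tau) \le F_* = G_{M,L_1}(y)$ on the inner set $\Omega_*^{c} := \{\overline{F} \ge F_*\}$, which by the remark following Proposition \ref{P1} is a bounded neighbourhood of the origin. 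All the work is therefore concentrated on the exterior region $\Omega_* := \{\overline{F} < F_*\} = \{\sum_j \gamma_j |y_j|^{p_j/(2-p_j)} > 1/F_*\}$, where $G_{M,L_1} = \overline{F}$. On this region the initial and boundary data are favourable: hypothesis (iii) gives $v_0 \le \overline{F}$ on $\Omega_*$, and on $\partial \Omega_* = \{\overline{F} = F_*\}$ the uniform bound yields $v(\cdot, \tau) \le F_* = \overline{F}$.

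The core of the argument is an $L^1$-type contraction estimate on $\Omega_*$. Setting $w := v - \overline{F}$, subtracting the stationary supersolution inequality furnished by Proposition \ref{P1} from the equation \eqref{APLs} for $v$, and multiplying by $p_\varepsilon(w)\zeta_n(y)$ with $p_\varepsilon$ a smooth monotone approximation of $\mathrm{sign}^+$ and $\zeta_n$ an anisotropic spatial cutoff, I would integrate over $\Omega_*$. The anisotropic $p$-Laplacian contribution is non-positive by the monotonicity of $s \mapsto |s|^{p_i-2}s$; the drift term $\alpha \sum_i \sigma_i (y_i w)_{y_i}$ tested against $p_\varepsilon(w)\zeta_n$ gives, after integration by parts and the exact cancellation produced by $\sum_i \sigma_i = 1$, only cutoff remainders supported at spatial infinity; and $\partial \Omega_*$ contributes nothing because $w_+ \equiv 0$ there. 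Passing to the limits $\varepsilon \to 0$ and $n \to \infty$, and using $v, \overline{F} \in L^\infty$ together with $\overline{F} \in L^1(\mathbb{R}^N \setminus B_R(0))$ to kill the remainders, I would conclude
\[
\frac{d}{d\tau} \int_{\Omega_*} w_+(y,\tau)\,dy \le 0.
\]
Combined with $w_+(\cdot, \tau_0) = 0$, this forces $w_+ \equiv 0$ on $\Omega_* \times [\tau_0, \infty)$, which together with the trivial bound on $\Omega_*^{c}$ yields \eqref{outer.comp}.

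The main obstacle is handling simultaneously the unbounded drift field $y \mapsto (\alpha \sigma_i y_i)$ at infinity and the limited regularity of the mild solution $v$. The former is tamed by choosing $\zeta_n$ adapted to the anisotropic scaling so that the cutoff remainders vanish in the limit (this is where the $L^1$-tail control provided by Proposition \ref{P1} enters in an essential way); the latter is dealt with by approximating $v$ through the implicit time discretization scheme of Theorem \ref{parabolicexistence}, which at each step reduces the argument to an elliptic comparison of the type proved in Theorem \ref{Thm Comp bis}.
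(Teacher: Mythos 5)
Your proposal is correct and follows the same route as the paper: fix $F_*$ via a uniform $L^\infty$ bound on $v$, use this to dispose of the inner region where $G_{M,L_1}=F_*$, and run an $L^1$-contraction comparison against the stationary supersolution $\overline{F}$ on the exterior region $\Omega_*$. The only differences are presentational. The paper makes the choice of $F_*$ explicit by splitting time into $0<\tau<\tau_1$ (where the pointwise bound $v_0\le L_1$ gives $\|v(\tau)\|_\infty\le L_1e^{\alpha\tau_1}$) and $\tau\ge\tau_1$ (where the smoothing effect gives $\|v(\tau)\|_\infty\le C_1M^{\bar p\alpha/N}(1-e^{-\tau_1})^{-\alpha}$), then picks $F_*$ dominating both; you compress this into a citation of the remark after Theorem \ref{L1LI}, which amounts to the same thing but hides the constraint on $F_*$ that Lemma \ref{lemma.below} will later need. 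For the exterior comparison the paper invokes Proposition \ref{Prop ComU}, while you rederive the $L^1$-contraction from scratch, carefully noting the cancellation in the drift term coming from $\sum_i\sigma_i=1$; this is actually a welcome piece of bookkeeping, since Proposition \ref{Prop ComU} as stated is formulated for the original equation \eqref{APL U} and its adaptation to the rescaled equation \eqref{APLs} (with unbounded drift) is left implicit in the paper. One small inaccuracy: your final sentence cites Theorem \ref{Thm Comp bis} (a symmetrization/concentration comparison) where what is actually used at each implicit time step is the $L^1$-contraction of Theorem \ref{ellipticexi} (or its analogue with drift); the symmetrization machinery plays no role here.
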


\noindent {\sl Proof.}
(i)   Let us pick some $\tau_1>0$. Starting from initial
mass $M>0$, from the smoothing effect \eqref{Linfty-L1} and the scaling transformation \eqref{NewVariables}(we put $t_0=1$ and then $\tau_0=0$), we know that
\begin{equation}
v(y,\tau)= (t+1)^{\alpha}u(x,t)\le C_1 M^{{\bar{p}\alpha/N}}((t+1)/t)^{\alpha}= C_1 M^{{\bar{p}\alpha/N}}(1-e^{-\tau})^{-\alpha},\label{firstsmooth}
\end{equation}
where $C_1$ is an universal constant as in \eqref{Linfty-L1}. Since $\tau=\log (t+1)$ we have $\|v(\tau)\|_\infty\le F_*$ for all $\tau\ge \tau_1$ if $F_*$ is such that
\begin{equation}\label{barr.cond1}
C_1 M^{{\bar{p}\alpha/N}}(1-e^{-\tau_1})^{-\alpha}\le F_*.
\end{equation}

(ii) For $0\leq\tau<\tau_1$ we argue as follows: from $v_0(y)\le L_1$ a.e. we get $u_0(x)\le L_1$ a.e., so $u(x,t)\le L_1$ a.e., therefore
$$
\|v(\tau)\|_\infty \le L_1 (t+1)^{\alpha}= L_1 e^{\alpha\tau} \quad a.e. .
$$
We now impose $F_*$ is such that
\begin{equation}\label{barr.cond3}
L_1 e^{\alpha\tau_1}\le F_*.
\end{equation}
Then we choose $F_*$ such that \eqref{barr.cond1} and \eqref{barr.cond3} hold.

(iii) Under these choices  we get $\|v(\tau)\|_\infty\le F_*$  for every $\tau>0$, which gives a comparison between  $v(y,\tau)$ with $G_{M,L_1}(y)$ in the complement of the exterior cylinder $Q_o=\Omega\times (0,\infty)$, where $\Omega=\{y:\overline F\leq F_*\}$, i.e. ${\{\sum_{j=1}^N\gamma_j|y_j|^{p_j/(2-p_j)}\geq 1/F_*\}}$. By the comparison in Proposition \ref{Prop ComU} for solutions in  $Q_o$ we conclude that
$$
v(y,\tau)\le G_{M,L_1}(y) \quad \mbox{ for }a.e. \ y\in \Omega, \ \tau>0,
$$
The comparison for $y\not\in \Omega$ has been already proved, hence the result \eqref{outer.comp}.
\qed




 As a consequence of mass conservation and the existence of the upper barrier we obtain a positivity lemma for certain solutions of the equation. This is the uniform positivity that is needed in the proof of existence of self-similar solutions, and it avoids the  fixed point from being trivial.

\begin{lemma}[A quantitative positivity lemma]\label{lem.pos}
Let $v$ be the solution of the rescaled equation \eqref{APLs} with integrable initial data $v_0$ such that: $v_0$ is a SSNI, bounded, nonnegative function with support in the ball of radius $R$,  $\int v_0(y)\,dy=M>0$
and  $v_{0}\leq G_{M,L_1}$ a.e., where $G_{M,L_1}$ is as in Theorem \ref{thm.barr}. Then, there is a continuous nonnegative function $\zeta(y)$, positive in a ball of radius $r_0>0$, such that
\begin{equation*}
v(y,\tau)\ge  \zeta(y) \quad \mbox{ for } {a.e.}  \ y\in \ren, \ \tau>0.
\end{equation*}
In particular, we may take $\zeta(y)\ge c_1>0$ a.e. in $ B_{r_0}(0)$ for suitable $r_0$ and $c_1>0$. The function $\zeta$ will depend on the choice of $M$ and $\|v_{0}\|_{\infty}$.
\end{lemma}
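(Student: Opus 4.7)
\medskip

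\noindent\textbf{Plan.} My strategy combines four ingredients: the upper-barrier bound of Theorem \ref{thm.barr}, mass conservation for the rescaled flow \eqref{APLs}, preservation of the SSNI property under that flow, and the integrability of $G_{M,L_1}$. The conclusion will come from a soft contradiction argument once these are in place.

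The first two are essentially already available. Theorem \ref{thm.barr} gives $v(y,\tau)\le G_{M,L_1}(y)$ for a.e.\ $y$ and every $\tau>0$, and $G_{M,L_1}\in L^1(\ren)$. The change of variables \eqref{NewVariables} preserves the $L^1$-norm and nonnegativity, while the $L^1$-theory of Section \ref{existence} gives mass conservation for nonnegative mild solutions; hence $\int_{\ren}v(y,\tau)\,dy=M$ for every $\tau\ge 0$.

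The third ingredient is that $v(\cdot,\tau)$ inherits the SSNI property of $v_0$, i.e.\ remains even in each $y_i$ and non-increasing in $|y_i|$ at the other coordinates fixed. The even symmetries $y_i\mapsto-y_i$ are symmetries of \eqref{APLs}, so they pass to $v$ by uniqueness of the mild solution. Monotonicity in each $|y_i|$ is preserved by a moving-planes argument carried out on sufficiently regular approximants: for $\lambda>0$ and the reflection across $\{y_i=\lambda\}$, the difference $w^\lambda(y,\tau):=v(\ldots,2\lambda-y_i,\ldots,\tau)-v(y,\tau)$ on $\{y_i>\lambda\}$ is nonnegative at $\tau=0$, vanishes on the hyperplane, tends to zero at infinity thanks to the upper barrier, and satisfies a comparison principle for the approximants. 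Sending $\lambda\downarrow 0$ and passing to the mild limit recovers the monotonicity.

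Given these facts, assume for contradiction that no pair $(r_0,c_1)$ works. Working with the canonical SSNI representative of $v$, for every $n\in\N$ there exist $\tau_n\ge 0$ and a point $y_n$ with $|y_n|\le 1/n$ and $v(y_n,\tau_n)<1/n$. Since $|y_{n,i}|\le 1/n$ for every $i$, SSNI yields
\[
v(z,\tau_n)\le v(y_n,\tau_n)<\tfrac{1}{n}\qquad\text{for a.e.\ }z\in A_n:=\{z\in\ren:|z_i|\ge 1/n\text{ for all }i\},
\]
while on the complement $A_n^c=\bigcup_{j=1}^N\{|z_j|<1/n\}$ one keeps $v(z,\tau_n)\le G_{M,L_1}(z)$. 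Since $|A_n^c|\downarrow 0$ as $n\to\infty$ and $\min(1/n,G_{M,L_1})\to 0$ pointwise, both quantities being dominated by $G_{M,L_1}\in L^1$, the dominated convergence theorem gives
\[
M=\int_{\ren}v(y,\tau_n)\,dy\le\int_{A_n}\min\!\left(\tfrac{1}{n},G_{M,L_1}\right)dy+\int_{A_n^c}G_{M,L_1}\,dy\longrightarrow 0,
\]
a contradiction with $M>0$. Hence some pair $(r_0,c_1)$ works, and any continuous bump $\zeta$ with $0\le\zeta\le c_1$, supported in $\overline{B_{r_0}(0)}$ and strictly positive on $B_{r_0}(0)$, satisfies the statement. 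The main obstacle is the rigorous preservation of SSNI under the singular anisotropic flow: the moving-planes comparison must be justified on smooth, non-degenerate regularizations (bounded strictly positive data, regularized diffusivities) and then passed to the mild limit via the $L^1$-stability of Section \ref{existence}.
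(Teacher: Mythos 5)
Your argument is correct and takes essentially the same route as the paper, which refers the reader to Lemma~5.1 of \cite{FVV2020}: combine the upper barrier, mass conservation, and preservation of SSNI via a contradiction argument closed out by dominated convergence against $G_{M,L_1}\in L^1$. The SSNI preservation that you flag as the main technical obstacle and propose to re-derive via moving planes is already supplied by Proposition~\ref{Prop 3} (via Proposition~\ref{Prop Al}), so your sketch of that step can simply be replaced by a citation.
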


We will recall the denomination SSNI stands for separately symmetric and nonincreasing. It was introduced in \cite{FVV2020}. The proof of Lemma \ref{lem.pos} runs as Lemma 5.1 of \cite{FVV2020}.

\section{Existence of a self-similar fundamental solution}\label{sec.ex.ssfs}

Now we are ready to prove the main Theorem of this Section, dealing with the difficult problem of finding a self-similar fundamental solution to \eqref{APL}, enjoying good symmetry properties and the expected decay rate
at infinity.
\begin{theorem}\label{thm.fundamental solution}
For any mass $M>0$ there is a self-similar fundamental solution of equation \eqref{APL} with mass $M$. The profile $F_M$ of such solution is a SSNI nonnegative function. Moreover $F_M(y)\le \overline{F}(y)$ for a.e. $y$
such that $|y|$ big enough, where  $\overline{F}(y)$ is given in \eqref{outer.barr}.
\end{theorem}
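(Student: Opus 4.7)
My plan is to construct $F_M$ as a stationary solution of the rescaled equation \eqref{APLs}, obtained through a Schauder fixed-point argument on a suitable convex invariant set. Unfolding the change of variables \eqref{NewVariables} (with $t_0=0$) will then produce the desired self-similar solution $B(x,t)$ of \eqref{APL} in the form \eqref{sss}. The fundamental-solution character (weak convergence $B(\cdot,t)\rightharpoonup M\delta_0$ as $t\downarrow 0$) will follow from mass conservation combined with the uniform decay of $F_M$ at infinity inherited from the upper barrier.

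\textbf{Invariant set and semigroup.} Fix $M>0$ and choose $L_1>0$ and the threshold $F_*$ so that the truncated barrier $G_{M,L_1}$ of Theorem \ref{thm.barr} has total mass at least $M$; this is possible since $\overline F\in L^1$ away from the origin and $F_*$ controls only the bounded central region. Set
\[
K:=\Bigl\{v\in L^{1}(\ren)\,:\, v\ge 0,\ v\text{ is SSNI},\ \textstyle\int_{\ren} v\,dy=M,\ v\le G_{M,L_1}\ \text{a.e.}\Bigr\},
\]
a convex, $L^1$-closed, non-empty subset. Let $\Phi_\tau$ denote the flow of \eqref{APLs}. The invariance $\Phi_\tau(K)\subset K$ rests on three ingredients: mass is preserved, because the scaling \eqref{NewVariables} is $L^1$-isometric and \eqref{APL} conserves mass; the pointwise barrier bound $\Phi_\tau v\le G_{M,L_1}$ is precisely the content of Theorem \ref{thm.barr}; and the SSNI property is preserved because \eqref{APLs} is invariant under each reflection $y_i\mapsto -y_i$ (so symmetry propagates by uniqueness in Theorem \ref{parabolicexistence}) and is compatible with Steiner symmetrization in each coordinate axis, thanks to the $T$-contractivity of the semigroup together with the fact that the drift and diffusion in \eqref{APLs} act diagonally in the directions $y_i$.

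\textbf{Fixed point and stationarity.} For each fixed $\tau>0$, the map $\Phi_\tau:K\to K$ is $L^1$-compact: the energy estimate \eqref{esttimeder1} gives a uniform bound on $\mathcal J(\Phi_\tau v)$ and hence on each $\|\partial_{y_i}\Phi_\tau v\|_{L^{p_i}}$, while the integrable dominant $G_{M,L_1}$ provides tail control and equi-integrability. An anisotropic Rellich-type argument then yields precompactness in $L^1$. Schauder's theorem gives $v_\tau^*\in K$ with $\Phi_\tau v_\tau^*=v_\tau^*$. Taking a sequence $\tau_n\downarrow 0$ and extracting an $L^1$ accumulation point $v_*\in K$ of $\{v_{\tau_n}^*\}$, the continuity of $\Phi_\sigma$ in $L^1$ together with the monotone dissipation of $\mathcal J$ along orbits (cf.\ \eqref{secondenergy}) forces $\Phi_\sigma v_*=v_*$ for every $\sigma\ge 0$, so $v_*$ is a genuinely stationary solution of \eqref{APLs}, hence of \eqref{StatEq}. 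Non-triviality of $v_*$ (ensuring the full mass $M$ is retained in the limit) follows from Lemma \ref{lem.pos}: for any SSNI bump of mass $M$ supported in a ball and lying in $K$, the orbit under $\Phi_\tau$ admits a uniform lower bound $\zeta>0$ on $B_{r_0}(0)$ independent of $\tau$, and this lower bound is inherited by $v_*$.

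\textbf{Conclusion and main obstacle.} Setting $F_M:=v_*$ and reversing \eqref{NewVariables} yields the self-similar fundamental solution $B(x,t)$ with $\int B(\cdot,t)\,dx=M$; the bound $F_M\le \overline F$ for $|y|$ large is inherited from the membership $v_*\in K$ and the fact that $G_{M,L_1}$ agrees with $\overline F$ outside the bounded region $\{\overline F> F_*\}$, while the initial trace $M\delta_0$ follows from mass conservation combined with $B(x,t)\to 0$ uniformly on $\{|x|\ge \varepsilon\}$ as $t\to 0^+$, itself a direct consequence of the self-similar scaling applied to the tail bound. The principal obstacle is Step~3: upgrading a fixed point of $\Phi_{\tau_0}$ (a priori only $\tau_0$-periodic) to a truly stationary solution. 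This is handled by the passage $\tau_n\downarrow 0$ together with the Lyapunov character of $\mathcal J$, which forces any accumulation point of orbits with arbitrarily short period to have vanishing time-derivative. A secondary delicate point is the rigorous verification that Steiner symmetrization in each coordinate can be combined with the anisotropic flow to preserve the SSNI property, which is possible precisely because the operator in \eqref{APLs} decouples across the coordinate directions.
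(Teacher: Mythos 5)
Your overall strategy mirrors the paper's: construct the profile as a fixed point of the rescaled flow via Schauder, use the positivity lemma to rule out triviality, and pass from periodic orbits to a genuine stationary profile. However there are two genuine gaps in the key step of upgrading periodic orbits to a stationary solution, and a related issue in the definition of the invariant set.

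\textbf{The Lyapunov argument does not apply.} You invoke the entropy-dissipation identity \eqref{secondenergy} to conclude that a $\tau_n$-periodic orbit with arbitrarily short period must be stationary. But \eqref{secondenergy} is an identity for the semigroup generated by $\mathcal{A}$, i.e.\ for the \emph{original} equation \eqref{APL}; along orbits of the \emph{rescaled} equation \eqref{APLs} one has $v_\tau=-\mathcal{A}v+\alpha\sum_i\sigma_i(y_i v)_{y_i}$, so $\frac{d}{d\tau}\mathcal{J}(v)=\langle\mathcal{A}v,v_\tau\rangle=-\|\mathcal{A}v\|_2^2+\langle\mathcal{A}v,\alpha\sum_i\sigma_i(y_i v)_{y_i}\rangle$, and the drift term has no sign. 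Thus $\mathcal{J}$ is not a Lyapunov functional for $\Phi_\tau$, and the claim that arbitrarily short periods force $v_\tau=0$ does not follow from \eqref{secondenergy}. The paper resolves this step purely topologically: it takes the dyadic sequence $h_n=2^{-n}$, so that for $n\geq m$ the period $h_n$ divides $k'2^{-m}$; periodicity $v_{h_n}(\cdot,\tau+k'2^{-m})=v_{h_n}(\cdot,\tau)$ then passes to the limit $\widehat{v}$ for every fixed $m,k'$, and density of the dyadic rationals plus $L^1_{loc}$ continuity of the orbit gives stationarity. Arbitrary $\tau_n\downarrow 0$ would not yield this divisibility structure.

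\textbf{The invariant set must depend on the period, and the uniform bounds need periodicity.} You fix $L_1$ and $F_*$ once and for all and then assert $\Phi_\tau(K)\subset K$ for all $\tau>0$. But Theorem \ref{thm.barr} has $v_0\le L_1$ as an explicit hypothesis and only concludes $v(\tau)\le G_{M,L_1}$ (hence $\le F_*>L_1$); nothing in that theorem returns a bound by $L_1$ itself. The smoothing effect \eqref{firstsmooth} gives $\|v(\tau)\|_\infty\le C_1M^{\bar p\alpha/N}(1-e^{-\tau})^{-\alpha}$, which blows up as $\tau\to0$, so to get $\Phi_\tau(K)\subset K$ one is forced to take $L_1=L_1(\tau)\to\infty$ and correspondingly $F_*(\tau)\to\infty$. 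Thus the sets $K(L_1(\tau_n))$ grow as $\tau_n\downarrow0$ and no fixed integrable dominant is available for the extraction. The paper's way out is that \emph{each} fixed point is $h$-periodic: combining periodicity with the smoothing estimate applied at large times gives $v_h\le C_1M^{\bar p\alpha/N}$ uniformly in $h$, and then $v_h\le\min(\overline{F},C_1M^{\bar p\alpha/N})$ uniformly, which delivers equi-integrability and tightness for the Dunford--Pettis extraction. Your proposal does not record this, and without it the passage to the limit is unjustified.

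A minor structural remark: if a strict Lyapunov function for \eqref{APLs} \emph{were} available, every periodic orbit would already be stationary, making both the limit $\tau_n\downarrow 0$ and the Schauder argument at small period superfluous; the fact that the paper instead constructs a whole family of periodic orbits and then passes to a carefully chosen limit is itself a signal that no such simple Lyapunov function is being used.
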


\noindent{\bf Remark.} Therefore, we get an upper bound for the behaviour
of $F$ at infinity. It has a clean  form in every coordinate direction: $F(y)\le O(|y_i|^{-\frac{p_i}{(2-p_i)}})$ as $|y_i|\to\infty$.

The basic existence with self-similarity follows as in Theorem 6.1 of \cite{FVV2020}. The full existence includes self-similarity and will be established next.


\subsection{Proof of existence of a self-similar solution}\label{existself}

We will proceed in a number of steps.

\noindent (i) Let $\phi\geq 0$ be bounded, symmetric decreasing with respect to $x_{i} $, supported in a ball of radius 1 centered at 0, with total mass $M$ (we ask for such specific properties for convenience).
We consider the solution $u_1$ such that $u_1(x,1)=\phi$, which is bounded and integrable for all $t>1$,  and denote
\begin{equation}\label{uk}
u_k(x,t)=\mathcal{T}_ku_1(x,t)=k^{\alpha}u_1(k^{\sigma_1\alpha}x_1, ..., k^{\sigma_N\alpha}x_N,kt)
\end{equation}
for every $k>1$. We want to let $k\rightarrow\infty$. In terms of rescaled variables {\eqref{NewVariables}}(with $t_0=0$) we have
\begin{equation*}
\begin{split}
v_k(y,\tau)&=e^{\alpha\tau }u_k(y_1e^{\alpha \sigma_1 \tau},...,y_Ne^{\alpha \sigma_N \tau}, e^\tau)
\\
&=e^{\alpha \tau }k^{\alpha}
u_1(k^{\sigma_1\alpha}y_1e^{\tau\sigma_1\alpha}, ..., k^{\sigma_n\alpha}x_Ne^{\tau\sigma_N\alpha}, ke^\tau),\nc
\end{split}
\end{equation*}
where $t=e^{\tau}$, $\tau>0$.  Put $k=e^{h}$ so that $k^{\sigma_i\alpha}e^{\tau\sigma_i\alpha}=e^{(\tau+h)\sigma_i\alpha}$. Then,
\begin{equation*}
v_k(y,\tau)
=e^{(\tau+h) \alpha }
u_1\left(y_1e^{(\tau+h)\sigma_1\alpha}, ..., y_Ne^{(\tau+h)\sigma_N\alpha},e^{(\tau+h)}\right).
\end{equation*}
Putting $v_1(y',\tau')=t^{\alpha}u_1(x,t)$ with $y'_i=x_i\,t^{-\alpha\sigma_i}$, $\tau'=\log t$, then
 \begin{equation*}
v_k(y,\tau)=
e^{(\tau+h-\tau') \alpha }
v_1(y_1e^{(\tau+h-\tau')\sigma_1\alpha}, ..., y_Ne^{(\tau+h-\tau')\sigma_N\alpha},\tau+h).
\end{equation*}
Setting $\tau'=\tau+h$, we get
\begin{equation}\label{vk}
v_k(y,\tau)=v_1(y,\tau+h).
\end{equation}
This means that the transformation $\mathcal{T}_k$ becomes a forward time
shift in the rescaled variables that we call $\mathcal{S}_h$ with $h=\log k$.

\medskip

 (ii)  Next, we prove the existence of periodic orbits with the following
setup.  \nc We take  $X = L^1(\mathbb{R}^N)$ as ambient space and consider an important subset of $ X$ defined as follows.

For any $L_{1}>0$, we define the set $K=K(L_{1})$ as the set of all $\phi\in L_{+}^{1}(\R^{N})\cap L^{\infty}(\R^{N})$  such that: \

(a) $\int \phi(y)\,dy = 1,$

(b) $\,\phi\,$ is SSNI (separately symmetric and nonincreasing w.r.\,to all coordinates),

(c) $\phi$  is  a.e. bounded above by $G_{L_{1}}(y)$, being $G_{L_{1}}(y)=\min\left\{\overline{F},F_{\ast}\right\}$ a fixed barrier, with $F_{\ast}$ conveniently large and $\overline{F}(y)$ defined in \eqref{outer.barr},

(d) $\phi$  is uniformly bounded above by $L_1>0$.

Observe that $G_{L_{1}}(y)$ is obtained in Theorem \ref{thm.barr} by truncating $\overline{F}(y)$ at a convenient level $F_*$: this gives that $G_{L_{1}}(y)$ is a barrier for solutions to \eqref{APLs} with mass $M=1$ and initial data verifying the assumption of Theorem \ref{thm.barr}.

\medskip

\noindent By the previous considerations, it is easy to see that  $K(L_1)$ is a non-empty, convex, closed and bounded subset with respect to the norm of the Banach space $X$.

Now, for all $\phi\in K(L_1)$ we consider the solution $v(y,\tau)$ to equation \eqref{APLs} starting at $\tau = 0$ with data $v(y, 0) =\phi(y)$, and we consider for all small $h>0$ the semigroup map $S_h : X \rightarrow X$ defined by $\mathcal{S}_h(\phi) =v(y, h)$. The following lemma collects some facts we need.

\begin{lemma}\label{lemma.below}
Given $h>0$, there exists $L_{1}=L_{1}(h)$ such that $S_h(K(L_1(h))\subset K(L_1(h))$. Moreover, $S_h(K(L_1(h)))$  is relatively compact in $X$.
Finally, for every $\phi\in K(L_1(h))$
\begin{equation}\label{bdd.below}
S_\tau\phi(y)\ge \zeta{_h}(y)   \quad \mbox{ for } a.e. \ y\in \ren, \ \tau>0,
\end{equation}
where $\zeta_{h}$ is a fixed function as in Lemma \ref{lem.pos}. It only depends on $h$.
\end{lemma}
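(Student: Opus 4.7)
The plan is to verify the three assertions in turn, the invariance serving to pin down the right choice of $L_1(h)$. We set
\begin{equation*}
L_1(h):=C_1(1-e^{-h})^{-\alpha},
\end{equation*}
where $C_1$ is the universal constant in \eqref{Linfty-L1}: the rescaling computation \eqref{firstsmooth} applied with mass $M=1$ gives $\|S_h\phi\|_\infty\le L_1(h)$ for every $\phi\in K(L_1(h))$, securing condition (d). Condition (a) is preserved because the flow is mass conserving on nonnegative $L^1$ data; condition (b) is preserved by the reflection-invariance of \eqref{APLs}, together with preservation of SSNI by the unrescaled flow (Theorem \ref{parabolicexistence}) and the fact that the linear rescaling $y_i=x_i t^{-\sigma_i\alpha}$ leaves the SSNI class invariant; condition (c) is exactly the barrier comparison Theorem \ref{thm.barr} with $M=1$. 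Hence $S_h(K(L_1(h)))\subset K(L_1(h))$.

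For the relative compactness of $S_h(K(L_1(h)))$ in $X=L^1(\ren)$ I would apply the Riesz-Kolmogorov criterion. Uniform $L^1$-boundedness is trivial and tightness at infinity is uniform,
\begin{equation*}
\int_{|y|>R}(S_h\phi)(y)\,dy\le \int_{|y|>R}G_{L_1(h)}(y)\,dy\xrightarrow[R\to\infty]{}0,
\end{equation*}
by condition (c) and the integrability of $\overline F$ at infinity. The only delicate ingredient is $L^1$-equicontinuity of translates: this will be extracted from the parabolic smoothing on the interval $[h/2,h]$. At the intermediate time $h/2$, $S_{h/2}\phi$ belongs to $L^1\cap L^\infty\subset L^2$ with a bound depending only on $h$, so the energy identities \eqref{firstenergy}-\eqref{esttimeder1} integrated on $[h/2,h]$ yield a uniform anisotropic Sobolev control on $S_h\phi$; coupling this with the tightness, an anisotropic Rellich-Kondrachov embedding provides relative compactness in $L^1(\ren)$.

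For the uniform lower bound I would reduce to Lemma \ref{lem.pos} by truncation. Since $G_{L_1(h)}\in L^1(\ren)$, pick $R_0=R_0(h)$ so that $\int_{|y|>R_0}G_{L_1(h)}\,dy\le 1/2$; then every $\phi\in K(L_1(h))$ carries mass at least $1/2$ on $B_{R_0}(0)$. The truncated datum $\tilde\phi:=\phi\,\chi_{B_{R_0}}$ is SSNI, nonnegative, supported in $B_{R_0}$, bounded above by $L_1(h)$, dominated by $G_{L_1(h)}$, and of total mass in $[1/2,1]$. Lemma \ref{lem.pos} (with all its parameters depending only on $h$) then furnishes a continuous $\tilde\zeta\ge 0$, positive on some $B_{r_0}(0)$, with $S_\tau\tilde\phi\ge\tilde\zeta$ a.e.\ for every $\tau>0$. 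The $L^1$-comparison of Theorem \ref{parabolicexistence} applied with $\tilde\phi\le\phi$ gives $S_\tau\phi\ge S_\tau\tilde\phi\ge\tilde\zeta=:\zeta_h$, uniformly over $\phi\in K(L_1(h))$.

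The hard part, as anticipated, will be the $L^1$-equicontinuity of translates in the compactness step: the semigroup's $L^1$-contraction alone does not upgrade the merely $L^1\cap L^\infty$ data in $K(L_1(h))$ to equicontinuous ones, and one has to invoke the parabolic regularisation provided by the anisotropic energy functional, a more technical ingredient than the direct applications of Theorems \ref{thm.barr} and \ref{parabolicexistence} and Lemma \ref{lem.pos} that handle the other two parts.
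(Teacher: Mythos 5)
Your proof takes essentially the same route as the paper's: same choice of $L_1(h)$ via the smoothing effect \eqref{firstsmooth}, same verification of (a)--(d) through mass conservation, SSNI preservation, barrier comparison (Theorem \ref{thm.barr}), and the smoothing bound, and the same appeal to Lemma \ref{lem.pos} for the lower bound. You have, however, supplied detail where the paper is brief: for compactness the paper only says it ``comes from known regularity theory,'' while you spell out a Riesz--Kolmogorov argument driven by tightness (from the barrier) and the energy estimates \eqref{firstenergy}--\eqref{esttimeder1}; and for the positivity bound you make explicit the truncation $\tilde\phi=\phi\,\chi_{B_{R_0}}$ needed to meet the compact-support hypothesis of Lemma \ref{lem.pos}, a step the paper silently elides. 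Two small caveats: (i) the reference for SSNI preservation should be Proposition \ref{Prop 3}, not Theorem \ref{parabolicexistence}, which is purely an existence/contraction statement; (ii) in the truncation step you should say a word about why the function $\zeta$ furnished by Lemma \ref{lem.pos} can be taken uniform over the truncated data $\tilde\phi$, since that Lemma's $\zeta$ is allowed to depend on the mass (which here ranges over $[1/2,1]$) and on $\|\tilde\phi\|_\infty$ (which ranges over $(0,L_1(h)]$); a monotonicity-in-$M$ remark, or fixing the worst case $M=1/2$, closes this.
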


\noindent {\sl Proof.}
Fix a small $h>0$, and let $L_1=L_{1}(h)$ such that
\begin{equation}\label{L1 h}
L_{1}\geq C_{1}M^{{\bar{p}\alpha/N}}(1-e^{-h})^{-\alpha},
\end{equation}
where $C_{1}$ is the constant in the smoothing effect \eqref{Linfty-L1}. We take ${\color{magenta}\tau_1}=h$ in the proof of Theorem \eqref{thm.barr} and choose $F_{\ast}=F_{\ast}(h)$ such that \eqref{barr.cond3} holds, that is
\[
L_1 e^{\alpha h}\le F_*.
\]
Then we have in particular that \eqref{barr.cond1} is satisfied, namely
\[
C_1 M^{{\bar{p}\alpha/N}}(1-e^{-h})^{-\alpha}\le F_*.
\]
This ensures the existence of a barrier $G_{L_{1}(h)}(y)$ (a truncated of
$\bar F$ defined in \eqref{outer.barr}), such that for $\phi\in K(L_1(h))$ and any $\tau>0$ we have $\mathcal{S}_{\tau}(\phi)\leq G_{L_{1}(h)}(y)$
a.e.. Then $\mathcal S_{h}(\phi)$ obviously verifies (c), while (a) is a consequence of mass conservation and (b) follows by Proposition \ref{Prop
3}. Moreover, \eqref{L1 h} ensures that from \eqref{firstsmooth} we immediately find $\mathcal{S}_{h}(\phi)\le L_1 $ a.e., that is property (d).
The relative compactness comes from known regularity theory. The last estimate \eqref{bdd.below} comes from
Lemma \ref{lem.pos}, which holds once a fixed barrier is determined.  \qed

It now follows from the Schauder Fixed Point Theorem, cf. \cite{Evansbk},
Theorem 3, Section 9.2.2,
that there exists at least fixed point $\phi_h \in K(L_{1}(h))$, \textit{i.\,e.,} $ \mathcal{S}_h(\phi_h) = \phi_h$. Set $ \mathcal{S}_{\tau}(\phi_h)=:v_{h}(y,\tau)$, thus in particular $v_{h}(y,0)=\phi_{h}(y)$. The fixed point is in $K$, so it is not trivial because it has mass 1 and moreover it satisfies the lower bound \eqref{bdd.below}.
Iterating the equality, we get periodicity for the orbit $v_h(y, \tau)$ starting at $\tau = 0$
\begin{equation}\label{vh}
v_h(y,\tau+ kh) = v_h(y,\tau )\quad  \forall \tau > 0,
\end{equation}
this is valid for all integers $k\geq1$.

\medskip

(iii) Once the periodic orbit is obtained we may examine the family of periodic orbits $\{v_h: h>0\}$ as a way to obtain a stationary solution in the limit $h\to0$. Prior to that, let us derive a uniform boundedness property of this family  based on the rough idea that periodic solutions enjoy special properties. Indeed,  the smoothing effect  implies that any solution with mass $M\le 1$ will be bounded by $C_1\,t^{-\alpha}$ (see \eqref{Linfty-L1}) in terms of the $u$ variable, hence $v(y,\tau) $ will be bounded uniformly in $y$ for all large $\tau$ when written in the $v$ variable. Since our functions $v_h$ are periodic, this asymptotic property actually implies that each $v_h$ is a bounded function, uniformly in $y$ and $t$. On close inspection we see that the bound is also uniform in $h$, $v_h\le C_1$ a.e.. That is quite handy since then we can also get a positive lower bound  $\zeta$ valid for all times using uniform upper bounds in $L^\infty$, $L^1$ and the upper barrier $\overline{F}$. Then we have that the family $v_{h}$ is uniformly bounded in $L^{1}\cap L^{\infty}$, thus the family $v_{h}$ is equi-integrable. Moreover $v_{h}$ is tight, because the mass confinement holds: indeed, since $v_{h}\leq \overline{F}$ a.e. uniformly w.r. to $h$, for a large $R>0$ it follows that
\[
\int_{|y|>R}v_{h}\,dy<\int_{|y|>R}\overline{F}(y)\,dy,
\]
thus (recall that $\overline{F}\in L^{1}(\R^{N}\setminus B_{R}(0)))$
\[
\lim_{R\rightarrow\infty}\int_{|y|>R}v_{h}\,dy=0.
\]
 Then the Dunford-Pettis Theorem implies that, up to subsequences,
 \[
 v_{h}(\tau)\rightharpoonup \widehat v(\tau)\quad\text{weakly in }L^{1}(\mathbb{R}^N)
 \]
 for some $\widehat v(y,\tau)$. In particular, this gives $\|\widehat v(\tau)\|_{L^{1}}=1$. {Moreover, the a priori estimates \eqref{firstenergy},\eqref{esttimeder1},\eqref{esttimeder2} and the smoothing effect \eqref{Linfty-L1} allow to employ the usual compactness argument and find that $\widehat v$ solves the rescaled equation \eqref{APLs} in the limit.}
%
%
%
%
%
%
%

(iv) {We can now take the dyadic sequence $h_n=2^{-n}$ and $k_n=k'2^{n-m}$ with $n,m,k'\in\mathbb{N}$ and $m\leq n$  in this collection of  periodic orbits $v_h$. Inserting this values in \eqref{vh} and passing to the limit (along such subsequence) as $n\to \infty$, we find  the equality
$$
\widehat v(y,\tau+ k'2^{-m})=\widehat v(y,\tau) \quad \forall \tau>0
$$
holds for all integers $m,k'\ge 1$.} By continuity of the orbit in $L^1_{loc}$, $\widehat v$ must be stationary in time. Passing to the limit, we conclude that $\widehat v(y)\leq C$ and moreover $\widehat v(y)\leq \overline{F}$, which gives in particular the required asymptotic behaviour at infinity with the correct rate. Going back to the original variables, it means that the corresponding function $\widehat u(x,t)$ is a self-similar solution of equation \eqref{APL}. Hence, its initial data must be a non-zero Dirac mass. Now we choose any mass $M>0$. If $M=1$ then $\widehat u$ is the selfsimilar solution we looked for. If $M\neq1$,  we apply the mass changing scaling transformation \eqref{mass.trans}. \qed

 \begin{remark}(Local positivity) We know from the proof that $\widehat{v}(y)\leq C$ and $\widehat{v}(y)\leq \overline{F}$, then Theorem \eqref{thm.barr} and Lemma \eqref{lem.pos} ensures that $\widehat{v}(y)\geq \zeta(y)$ for some positive function $\zeta$. Hence, $\widehat{v}$ is locally positive.
 \end{remark}

We have a further property of the self-similar solutions that we will use
later.

\begin{proposition}\label{prop.ssni} Any non-negative self-similar solution $B(x,t)$ with finite mass is SSNI.
\end{proposition}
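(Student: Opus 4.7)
The claim reduces to SSNI of the profile $F$ in \eqref{sss}, because the similarity rescaling $y_i = t^{-\sigma_i \alpha} x_i$ is parity-preserving and coordinatewise-monotone (recall $\alpha>0$ by (H2) and $\sigma_i>0$ by (H3)); thus SSNI of $B(\cdot,t)$ is $t$-independent and equivalent to SSNI of $F$. The plan is to (a) prove that the $L^1$ semigroup of \eqref{APL} preserves the class of SSNI functions, and (b) realize $B$ as an $L^1$-limit of SSNI semigroup solutions.

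Separate symmetry in $x_i$ is immediate from uniqueness: \eqref{APL} is invariant under $x_i \mapsto -x_i$, so if $u_0$ is even in $x_i$ then $\tilde u(x,t):=u(x_1,\ldots,-x_i,\ldots,x_N,t)$ is a mild solution with the same initial datum as $u$, and the uniqueness part of Theorem \ref{parabolicexistence} forces $\tilde u = u$.

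Separate monotonicity in $|x_i|$ is obtained by a reflection--comparison (moving-planes) argument. Fix $h>0$ and let $R_h$ be reflection across $\Pi_h=\{x_i=h\}$; set $w(x,t):=u(R_h x,t)$, which is again a mild solution of \eqref{APL}. A short computation gives $|R_h(x)_i|\le|x_i|$ on the half-space $H_h=\{x_i\ge h\}$, so SSNI of $u_0$ yields $u_0\le w_0$ on $H_h$ with equality on $\Pi_h$. Testing the difference of the two equations against $(u-w)_+\mathbf{1}_{H_h}$ (legitimized by cutoff in the remaining coordinates and $L^2$-regularization as in the $T$-contraction argument of Theorem \ref{parabolicexistence}, using the smoothing estimate \eqref{Linfty-L1} to ensure $u,w\in L^\infty\cap L^1$ for $t>0$), the boundary flux across $\Pi_h$ vanishes because $u=w$ there, while the monotonicity of each flux $s\mapsto|s|^{p_j-2}s$ discards the interior diffusion term, leaving
\[
\tfrac{d}{dt}\int_{H_h}(u-w)_+^2\,dx\le 0.
\]
Since this quantity vanishes at $t=0$, one gets $u\le w$ on $H_h$ for all $t>0$, i.e.\ $u(2h-x_i,\cdots,t)\ge u(x_i,\cdots,t)$ there. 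Letting $h\downarrow 0^+$ yields $u(x_i,\cdots,t)\ge u(x_i',\cdots,t)$ whenever $0\le x_i\le x_i'$; together with the symmetry just proved, $u(\cdot,t)$ is SSNI.

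For step (b), approximate $M\delta_0$ by a bounded SSNI sequence $\phi_n\in L^1(\R^N)$ with $\int\phi_n=M$ (for instance $M|Q_n|^{-1}\mathbf{1}_{Q_n}$ on shrinking symmetric boxes $Q_n$) and let $v_n$ be the corresponding mild solutions, each SSNI by step (a). The smoothing effect \eqref{Linfty-L1}, the upper barrier of Proposition \ref{P1}, and the regularity bounds \eqref{firstenergy}--\eqref{esttimeder2} provide, exactly as in Section \ref{existself}(iii), equi-integrability, tightness, and compactness, so along a subsequence $v_n(\cdot,t)\to v_\infty(\cdot,t)$ in $L^1(\R^N)$ for a.e.\ $t>0$, with $v_\infty$ a mild solution having $M\delta_0$ as its initial trace; the scaling invariance of \eqref{APL} then forces $v_\infty$ to be self-similar, and the matching of initial trace, mass, and self-similar form identifies $v_\infty$ with $B$. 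Since SSNI is preserved under $L^1$-limits, $B(\cdot,t)$ is SSNI. The principal technical obstacle is the half-space comparison in the monotonicity step: the test function $(u-w)_+\mathbf{1}_{H_h}$ lies at the boundary of the admissible energy class for \eqref{APL} in the fast-diffusion range, and its use must be justified by the truncation/density procedure of Theorem \ref{parabolicexistence}. A secondary, more conceptual point is the identification of $v_\infty$ with $B$ in the absence of an anisotropic uniqueness theorem for self-similar solutions, which we handle here via the self-similar structure together with matching mass and initial trace.
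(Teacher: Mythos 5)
Your step (a) is sound and is essentially the Aleksandrov reflection argument (Propositions \ref{Prop Al} and \ref{Prop 3} of the paper, proved via the half-space comparison in Proposition \ref{Prop ComU}), so the claim that the $L^1$ semigroup preserves the SSNI class is fine, modulo the technical cutoff details you point to. The gap is in step (b), and it is fatal for the purpose of this proposition.

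You approximate $M\delta_0$ by SSNI data $\phi_n$, obtain SSNI semigroup solutions $v_n$, pass to a subsequential $L^1$-limit $v_\infty$, and then assert that $v_\infty$ coincides with the \emph{given} self-similar solution $B$ because both have mass $M$ and initial trace $M\delta_0$ and ``the scaling invariance of \eqref{APL} then forces $v_\infty$ to be self-similar.'' This identification is exactly what is unavailable here: in the general anisotropic (non-orthotropic) case the paper does \emph{not} prove uniqueness of fundamental solutions --- that question is explicitly left open, and uniqueness is established only for $p_i\equiv p$ in Section \ref{sec.uniq}, where this very proposition is used as an input. Your construction produces \emph{some} SSNI self-similar solution of mass $M$; it does not show that an \emph{arbitrary} nonnegative finite-mass self-similar $B$ is SSNI, because nothing rules out there being two distinct self-similar fundamental solutions, one SSNI and one not. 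Also, the subsequential limit $v_\infty$ need not a priori be self-similar or even uniquely determined by the sequence $\phi_n$, so the phrase ``matching of initial trace, mass, and self-similar form identifies $v_\infty$ with $B$'' is not backed by any result in the paper.

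The paper sidesteps this entirely by starting from the given $B$ itself: it approximates $B(\cdot,1)$ in $L^1$ by compactly supported data $u_n(\cdot,1)$ of support radius $R$, uses the $L^1$-contraction to guarantee that $u_n(\cdot,t)\to B(\cdot,t)$ for all $t\ge 1$ (so the limit is tautologically $B$ --- no uniqueness of fundamental solutions is needed), invokes Aleksandrov to get an \emph{approximate} SSNI property for $u_n(\cdot,t)$ with a defect of size $O(R)$, and then observes that in the rescaled variables $y_i=x_i t^{-a_i}$ this defect is $O(R\,t^{-a_i})\to 0$, so the limiting profile $F$ is exactly SSNI. To repair your proof you would need to replace the Dirac-mass approximation by an approximation of $B(\cdot,1)$ itself, at which point you are reproducing the paper's argument; with the Dirac-mass route you must supply a uniqueness theorem for anisotropic fundamental solutions, which is not available.
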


\noindent {\sl Proof}. We use two general ideas, (i) SSNI is an asymptotic property of many solutions, and (ii) self-similar solutions necessarily
verify asymptotic properties for all times.

Let  us consider a non-negative self-similar solution, $B(x,t)$. The issue is to prove it has the SSNI property. This is done by approximation and
rescaling. We begin with approximating $B$ at time $t=1$ with a sequence of bounded, compactly supported functions $u_n(x,1)$ with increasing supports and converging to $B(x,1)$ in $L^1(\mathbb{R}^N)$. We consider the corresponding solutions $u_n(x,t)$ to \eqref{APL}, for $t\ge 1$.

 The Aleksandrov principle says that these functions $u_n(\cdot,t)$ have as $t\to \infty$ an approximate version of the SSNI properties as follows. If the initial support at $t=1$ is contained in ball of radius $R>0$ then for all $t>1$ and for every $x,\widetilde{x}\in \ren$,  $|x|,$ $|\widetilde{x}|\ge 2R$, we have
\begin{equation}\label{alek.ssni}
u(x,t)\ge u(\widetilde{x},t)
\end{equation}
on the condition that $|\widetilde{x}^i|\ge |x^i|+2R$ for every $i=1,\cdots,N$. A convenient reference can be found in references like \cite{CVW87} or \cite[Proposition 14.27]{Vlibro}.

The last step is to translate this asymptotic approximate properties into
exact properties. This is better done in the $v$ formulation, introduce with formulas \eqref{NewVariables} and \eqref{APLs}. We first observe that
$u_{n}$ converges to some $\widetilde{B}$, thus by the contraction principle, for $t\geq1$
\[
\|u_{n}(t)-B(t)\|_{L^{1}(\R^{N})}\leq \|u_{n}(1)-B(1)\|_{L^{1}(\R^{N})}
\]
and passing to the limit as $n\rightarrow\infty$ we have $\widetilde{B}(x,t)=B(x,t)$ for $t\geq1$. This implies that the sequence $v_n(y,\tau)$ of rescaled solutions converges to the self-similar profile $F(x)=B(x,1)$ at $\tau\geq 0$ (i.e. $t\geq1$). 
On the other hand, the definition of the rescaled variables $y_i=x_i\,t^{-a_i}$ implies that the monotonicity properties derived for $u_n$ by Aleksandrov keep being valid in terms of $(y_1,\cdots, y_N)$ with the reformulation:
\begin{equation}\label{ssni}
v_{n}(y,\tau)\ge v_{n}(\widetilde{y},\tau)
\end{equation}
on the condition that $|\widetilde{y}_{i}|\ge |y_{i}|+2R\, t^{-a_i}$. Similarly, symmetry comparisons are true up to a displacement $R\,t^{-a_i}$.
Passing to the limit in \eqref{ssni} as $n\rightarrow\infty$, we find
\[
F(y)\geq F(\widetilde{y})
\]
provided $|\widetilde{y}_{i}|\ge |y_{i}|+2R\, t^{-a_i}$. Since $t$ can be
chosen arbitrarily large, the same property holds for $|\widetilde{y}_{i}|\ge|y_{i}|$. Thus $F$ is symmetric with respect to each $x_{i}$ and the full SSNI applies to $F$, hence to the original $B$.\qed

%
%

\medskip

\section{Lower barrier construction and global positivity}\label{sec.lbarr}

Now we get a  lower barrier that looks a bit like the upper barrier of Section \ref{sec.upp}.

\begin{proposition}\label{P2}
Let us take $\gamma>0$, let $0<\vartheta_i\leq1$ be chosen such that
\begin{equation}\label{gamma bis}
\frac{1}{\gamma\vartheta_i}<\frac{2-p_i}{p_i}(< \sigma_i).\end{equation}
Then,
\begin{equation}\label{F under}
\underline{F}(y)= \left(A+\sum_{i=1}^N|y_i|^{\vartheta_i}\right)^{-\gamma}
\in L^1(\mathbb{R}^N)
\end{equation}
is a weak sub-solution in $\mathbb{R}^N$ and a  classical  sub-solution to the stationary equation \eqref{StatEq} in
{$\mathbb{R}^N\setminus\cup_{i=1}^N\{y\in \mathbb{R}^N:y_i=0\}$} for $A>A_0$, where
\begin{equation}\label{r0under}
A_0:=\max_{i=N_0+1,\cdots,N}\left(\frac{N\gamma^{p_i-1} (p_i-1)(\gamma +1)\vartheta_i^{p_i}}{\alpha(\gamma\max_i\{\sigma_i\vartheta_i\}-1)}\right)
^{\frac{1}{\gamma-\gamma(p_i-1)-p_i/\vartheta_i}}.
\end{equation}
\end{proposition}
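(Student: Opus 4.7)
The plan is a direct computation parallel to that of Proposition \ref{P1}, but with all inequality signs reversed. Setting $Y(y):=A+\sum_{j=1}^N|y_j|^{\vartheta_j}$ so that $\underline{F}=Y^{-\gamma}$, away from the coordinate hyperplanes I compute $\underline{F}_{y_i}=-\gamma\vartheta_i\,Y^{-\gamma-1}|y_i|^{\vartheta_i-1}\mathrm{sgn}(y_i)$ and
\[
|\underline{F}_{y_i}|^{p_i-2}\underline{F}_{y_i}=-(\gamma\vartheta_i)^{p_i-1}Y^{-(\gamma+1)(p_i-1)}|y_i|^{(\vartheta_i-1)(p_i-1)}\mathrm{sgn}(y_i).
\]
A further $y_i$-differentiation produces, for $y_i\ne 0$, two nonnegative summands: the dominant one is $\gamma^{p_i-1}\vartheta_i^{p_i}(\gamma+1)(p_i-1)Y^{-(\gamma+1)(p_i-1)-1}|y_i|^{(\vartheta_i-1)p_i}$, while the other carries a factor $(1-\vartheta_i)$ and vanishes when $\vartheta_i=1$. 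The drift reads $\alpha\sigma_i(y_i\underline{F})_{y_i}=\alpha\sigma_i Y^{-\gamma}-\alpha\sigma_i\gamma\vartheta_i Y^{-\gamma-1}|y_i|^{\vartheta_i}$, and using $\sum_i\sigma_i=1$ the total drift contribution reads $\alpha Y^{-\gamma}-\alpha\gamma Y^{-\gamma-1}\sum_i\sigma_i\vartheta_i|y_i|^{\vartheta_i}$.

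The central step is then to show $I:=\sum_i[(|\underline{F}_{y_i}|^{p_i-2}\underline{F}_{y_i})_{y_i}+\alpha\sigma_i(y_i\underline{F})_{y_i}]\ge 0$ for $A\ge A_0$. I bound the negative drift via $\sum_i\sigma_i\vartheta_i|y_i|^{\vartheta_i}\le\max_j\{\sigma_j\vartheta_j\}(Y-A)$, which isolates a dangerous term of the form $-\alpha(\gamma\max_j\{\sigma_j\vartheta_j\}-1)Y^{-\gamma}$ whose coefficient is strictly positive by \eqref{gamma bis} (since $\sigma_i\gamma\vartheta_i>1$ for each $i$). To compensate it I keep only the first diffusion summand and exploit $|y_i|^{\vartheta_i}\le Y$ to deduce $|y_i|^{(\vartheta_i-1)p_i}\ge Y^{(\vartheta_i-1)p_i/\vartheta_i}$, yielding the lower bound $\gamma^{p_i-1}\vartheta_i^{p_i}(\gamma+1)(p_i-1)Y^{-\gamma(p_i-1)-p_i/\vartheta_i}$ for each index $i$. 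The decisive observation is that by \eqref{gamma bis} the exponent $\gamma(2-p_i)-p_i/\vartheta_i$ is strictly positive, so this diffusion term decays \emph{strictly slower in $Y$} than the bad drift piece; factoring out $Y^{-\gamma}$ and estimating $Y\ge A$ then reduces the sub-solution inequality to requiring $A\ge A_0$, with $A_0$ of the form displayed in \eqref{r0under}.

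Two loose ends remain. The integrability $\underline{F}\in L^1(\mathbb{R}^N)$ follows from the anisotropic change of variables $z_i=|y_i|^{\vartheta_i}$, which reduces $\int\underline{F}$ to a Dirichlet-type integral convergent iff $\sum_i 1/(\gamma\vartheta_i)<1$; chaining \eqref{gamma bis} with (H2) gives $\sum_i 1/(\gamma\vartheta_i)<\sum_i(2-p_i)/p_i<1$. The weak sub-solution property on all of $\mathbb{R}^N$ follows from the classical one away from the hyperplanes by an approximation argument, since the flux is in $L^1_{\mathrm{loc}}$ (because $(\vartheta_i-1)(p_i-1)>-1$) and is odd in $y_i$, so the distributional pairing against any nonnegative test function transfers the pointwise inequality without boundary contribution. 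The main obstacle is the algebraic bookkeeping together with the discovery that \eqref{gamma bis} is precisely what simultaneously makes the crucial exponent $\gamma(2-p_i)-p_i/\vartheta_i$ positive, ensures $\gamma\sigma_i\vartheta_i>1$, and yields the $L^1$ integrability---all through the chain $\frac{1}{\gamma\vartheta_i}<\frac{2-p_i}{p_i}<\sigma_i$.
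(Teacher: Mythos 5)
Your computation matches the paper's proof almost line by line: you expand the diffusion term into the two nonnegative pieces, drop the one carrying the factor $(1-\vartheta_i)$, bound the drift via $\max_j\{\sigma_j\vartheta_j\}$, lower-bound the surviving diffusion piece using $|y_i|^{\vartheta_i}\le Y$, and observe that \eqref{gamma bis} makes the exponent $\gamma(2-p_i)-p_i/\vartheta_i$ strictly positive so the diffusion term dominates for $Y\ge A_0$; your treatment of the $L^1$ integrability is in fact more explicit than the paper, which merely states it ``is easy to check'' (your chain $\sum_i 1/(\gamma\vartheta_i)<\sum_i(2-p_i)/p_i<1$ via (H2) is correct and fills that gap nicely). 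So for the core estimate the route is essentially the same.

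There is, however, a genuine flaw in the last paragraph, where you argue that the weak sub-solution property ``transfers without boundary contribution'' because the flux $|\underline{F}_{y_i}|^{p_i-2}\underline{F}_{y_i}$ is odd in $y_i$ and in $L^1_{\mathrm{loc}}$. Oddness is precisely what \emph{produces} a boundary contribution here: an odd function that is nonzero on one side of $\{y_i=0\}$ has a jump equal to $2\Phi_i(0^+)$, not zero, and this jump is what gets picked up by the distributional derivative (compare $\mathrm{sgn}(y_i)$, which is odd and locally integrable yet has derivative $2\delta_{\{y_i=0\}}$). In the present case, $|\underline{F}_{y_i}|^{p_i-2}\underline{F}_{y_i}\sim -(\gamma\vartheta_i)^{p_i-1}Y^{-(\gamma+1)(p_i-1)}|y_i|^{(\vartheta_i-1)(p_i-1)}\,\mathrm{sgn}(y_i)$ is strictly negative for $y_i>0$ (and unbounded if $\vartheta_i<1$), so in an integration by parts over $\{|y_i|>\varepsilon\,\forall i\}$ the slab boundary terms do not vanish and do not cancel by symmetry; they actually carry a definite sign. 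The paper handles this step differently: it truncates the domain, integrates by parts, and \emph{estimates the size} of the surface integrals, showing they are $O(\varepsilon^{N+(\vartheta_i-1)(p_i-1)})$---the mechanism there is the smallness of the boundary measure together with the exponent bound $(\vartheta_i-1)(p_i-1)>-1$, not oddness. You should replace the oddness argument with such a quantitative boundary estimate (and the condition $(\vartheta_i-1)(p_i-1)>-1$, which you already noticed for $L^1_{\mathrm{loc}}$ of the flux, is indeed what makes that estimate work).
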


\medskip

\noindent {\sl Proof}.
Since $\vartheta_i\leq1$ we get
\begin{equation*}
\begin{split}
I:=&\sum_{i=1}^{N}\left[(|\underline{F}_{y_i}|^{p_i-2}\underline{F}_{y_i})_{y_i}+\alpha \sigma_i\left( y_i \underline{F}\right)_{y_i}
\right]
\\
&\geq \sum_{i=1}^{N}\left(A+\sum_{j=1}^N|\eta_j|^{\vartheta_j}\right)^{-(\gamma+1)(p_i-1)-1}\!\!\!\!\!\!\!\!\gamma^{p_i-1} (p_i-1)(\gamma+1)\vartheta_i^{p_i}|y_i|^{p_i(\vartheta_i-1)}
+\alpha\left(A+\sum_{i=1}^N|y_i|^{\vartheta_i}\right)^{-\gamma}
\\
&-\gamma \alpha \max_i\{\sigma_i\vartheta_i\}\left(A+\sum_{i=1}^N|y_i|^{\vartheta_i}\right)^{-\gamma-1}
\sum_{i=1}^{N}|y_i|^{\vartheta_i}
\\
\geq &
\sum_{i=1}^{N}\left(A+\sum_{j=1}^N|\eta_j|^{\vartheta_j}\right)^{-(\gamma+1)(p_i-1)-1}\!\!\!\!\!\!\!\!\gamma^{p_i-1} (p_i-1)(\gamma+1)\vartheta_i^{p_i}
\left(A+\sum_{j=1}^{N}|y_j|^{\vartheta_j}\right)^{p_i(1-1/\vartheta_i)}
\\
&+\alpha\left(A+\sum_{i=1}^N|y_i|^{\vartheta_i}\right)^{-\gamma}
-\gamma \alpha\max_i\{\sigma_i\vartheta_i\}\left(A+\sum_{i=1}^N|y_i|^{\vartheta_i}\right)^{-\gamma-1}
\left(A+\sum_{i=1}^{N}|y_i|^{\vartheta_i}\right).
\end{split}
\end{equation*}
Denoting $X=A+\sum_{j=1}^N|\eta_j|^{\vartheta_j}$, we obtain
\begin{equation*}
I\geq
\sum_{i=1}^{N}X^{-\gamma(p_i-1)-p_i/\vartheta_i}\left[
\gamma^{p_i-1} (p_i-1)(\gamma+1)\vartheta_i^{p_i}
+X^{-\gamma+\gamma(p_i-1)+p_i/\vartheta_i}
\frac{\alpha}{N}\left(1-\gamma\max_i\{\sigma_i\vartheta_i\}\right)\right].
\end{equation*}
We stress that \eqref{gamma bis} yields $1-\gamma\max\{\sigma_i\vartheta_i\}\leq0$
and  $-\gamma+\gamma(p_i-1)+p_i/\vartheta_i<0$.
In order to have  $I\geq0$ we have to require $X\geq A_0$.
Choosing $A>A_0$, it follows that $\underline{F}$ is a sub-solution to equation \eqref{StatEq} in $\mathbb{R}^N\setminus\{0\}$.
{It is easy to check that $\underline{F}\in L^1(\mathbb{R}^N)$ and $\underline{F}_{y_i}\in L^{p_i}(\mathbb{R}^N)$ for all $i$.
In order to prove that it is a weak solution in all $\mathbb{R}^N$, we have to multiply for a test function $\psi \in \mathcal{D}(\mathbb{R}^N)$, to integrate in $\mathbb{R}^N\setminus \cup_{i=1}^N\{y:|y_i|<\varepsilon\}$ for $\varepsilon>0$ and finally to estimate the boundary terms. We observe that for every $i=1,\cdots,N$
$$\left|\int_{\partial \{[-\varepsilon,\varepsilon]^N\}} \underline{F} \,
y_i \partial_{y_i}\psi\, d \sigma\right|\leq A^{-\gamma}\|\psi_{y_i}\|_{\infty}C(N)\varepsilon^{N+1}$$
and
$$\left|\int_{\partial \{[-\varepsilon,\varepsilon]^N\}}|\partial_{y_i}\underline{F}|^{p_i-2} \partial_{y_i}\underline{F} \partial_{y_i}\psi \, d \sigma\right|\leq
 A^{-(\gamma+1)(p_i-1)}\|\psi_{y_i}\|_{\infty}C(N)\varepsilon^{N+(\vartheta_i-1)(p_i-1)},$$
where $N+(\vartheta_i-1)(p_i-1)>0$ under our assumptions. Similar computations work for the other boundary terms. It is clear that all boundary terms go to zero when $\varepsilon \rightarrow0$.}
\qed

\begin{remark}
Under the assumption of Proposition \ref{P2} we have
\begin{equation}\label{U under}
\underline{U}(x,t)=
t^{-\alpha}\underline{F} (t^{-\alpha\sigma_i}x_1, \cdots,t^{-\alpha\sigma_i}x_N)
\end{equation}
 is a weak sub-solution to \eqref{APL} in $\mathbb{R}^N\times[0,\infty)$ such that $\underline{U}(x,t)\rightarrow \|\underline{F}\|_{L^1}\delta_0(x)\, \text{ as }t\rightarrow 0$
 in distributional sense. In particular, for every $x\ne 0$ we have
 \begin{equation}\label{lim}
 \lim_{t\to 0} \underline{U}(x,t)=0.
 \end{equation}
\end{remark}


We prove a comparison result from below. We take as comparison the two functions

(i) the self-similar solution in original variables (with $t_0=1$ for simplicity)
$$
B(x,t)=(t+1)^{-\alpha}F(x_1(t+1)^{-\alpha \sigma_1},\cdots, x_N(t+1)^{-\alpha \sigma_N}),
$$
with $\alpha$ and $\sigma_i$ as prescribed in \eqref{alfa} and \eqref{ai}, and

(ii) the function $\underline{U}(x,t) $  stated in \eqref{U under}, that depends on the parameter $A$.

\begin{theorem}[Lower Barrier comparison]\label{thm.lowbarr}
There is a time $\overline{t}>0$, a radius $R>0$ and a constant $A$ large
enough, such that for every $|x|\ge R$, $ 0\le t\le \overline{t}$
 we have
\begin{equation}\label{compp}
\underline{U}(x,t) \le B(x,t)\,.
\end{equation}
\end{theorem}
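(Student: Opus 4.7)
The plan is to verify the pointwise inequality \eqref{compp} by exploiting that $\underline U$ and $B$ share the same self-similar exponents $\alpha$ and $\sigma_i$, so the only asymmetry between them is the time shift ($t$ versus $t+1$). The key qualitative fact is that $\underline U(x,t)\to 0$ as $t\to 0^+$ for every $x\neq 0$ by \eqref{lim}, while $B(x,0)=F(x)$ is continuous, SSNI by Proposition \ref{prop.ssni}, and strictly positive on a ball about the origin by the local positivity remark following Theorem \ref{thm.fundamental solution}.

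First I would derive a quantitative upper bound on $\underline U$. Using the explicit form \eqref{F under} and discarding all but one term of the sum, one gets, for any index $i_0\in\{1,\dots,N\}$,
\[
\underline U(x,t)\;\le\;t^{-\alpha}\Big(|x_{i_0}|^{\vartheta_{i_0}}\,t^{-\alpha\sigma_{i_0}\vartheta_{i_0}}\Big)^{-\gamma}\;=\;t^{\alpha(\gamma\sigma_{i_0}\vartheta_{i_0}-1)}\,|x_{i_0}|^{-\gamma\vartheta_{i_0}}.
\]
By assumption \eqref{gamma bis} the exponent of $t$ is strictly positive, so the right-hand side vanishes as $t\to 0^+$, uniformly on $|x|\ge R$. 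Next, on a bounded annular region $\{R\le|x|\le R_1\}\times[0,\bar t]$ the profile $B(x,t)$ admits a uniform lower bound $B(x,t)\ge c_0>0$ by continuity and positivity of $F$ combined with its SSNI structure. Choosing $\bar t$ small therefore yields $\underline U(x,t)\le c_0\le B(x,t)$ on that annulus.

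The remaining, delicate regime is $|x|\ge R_1$ large, where one must compare two small quantities. Here I would exploit the freedom to take $\gamma$ as large as one wishes consistently with \eqref{gamma bis}, so that $\underline F$ decays at infinity strictly faster than $\overline F$ in every coordinate direction (i.e.\ $\gamma\vartheta_i>p_i/(2-p_i)$), and then choose $A$ large to further shrink $\underline F$ globally. The main obstacle is to produce a matching pointwise \emph{lower} bound on $B$ at infinity: Theorem \ref{thm.fundamental solution} only provides the upper bound $F\le\overline F$ for $|y|$ large. I would overcome this either by an Aleksandrov-type quantitative reflection argument giving a lower envelope for $F$ compatible with its total mass, or by a parabolic Harnack-type estimate applied to the rescaled equation \eqref{APLs} yielding quantitative local positivity. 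Combined with the fast polynomial decay of $\underline U$ just established, this closes the comparison on the far field and ultimately delivers \eqref{compp}.
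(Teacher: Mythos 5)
Your proposal correctly handles the bounded annulus $\{R\le|x|\le R_1\}$ (uniform smallness of $\underline U$ as $t\to 0^+$ from \eqref{lim}, positive lower bound on $B$ there from $F\ge c_1>0$ near the origin), but it departs from the paper at the far-field step and leaves a genuine gap there. You set out to prove the pointwise inequality directly on all of $\{|x|\ge R\}$, which forces you to produce a quantitative \emph{lower} bound on $B$ for $|x|$ large. That bound is not available at this stage of the paper: it is precisely the content of Corollary \ref{thm.lowbeh}, which is \emph{deduced from} Theorem \ref{thm.lowbarr}, so invoking it (or something equivalent) here is circular. The alternatives you sketch do not close the gap either: the Aleksandrov reflection principle only gives SSNI monotonicity of $F$, not a quantitative decay rate from below, and a parabolic Harnack estimate is local, so chaining it out to $|x|\to\infty$ degrades constants and does not obviously beat the tunable decay rate $|x_{i_0}|^{-\gamma\vartheta_{i_0}}$ of $\underline U$.

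The paper's route avoids this entirely. One does not compare $\underline U$ and $B$ pointwise in the far field; instead one applies the parabolic comparison principle (Proposition \ref{Prop ComU}) for the subsolution $\underline U$ against the solution $B$ in the \emph{outer cylinder} $\{|x|>R\}\times(0,\overline t)$. The only inputs needed are comparison on the parabolic boundary: at $t=0$, $\underline U(\cdot,0^+)=0\le F=B(\cdot,0)$ on $\{|x|>R\}$; and on the lateral boundary $\{|x|=R\}\times(0,\overline t)$, $\underline U$ is uniformly small (by \eqref{lim} and choosing $A$ large, $\overline t$ small), while $B\ge c_0>0$ there since $F\ge c_1>0$ on $B_{2R}(0)$ and $(t+1)^{-\alpha}\approx 1$. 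The comparison principle then propagates $\underline U\le B$ into the whole outer cylinder, with no information about the decay rate of $B$ at infinity required. Replacing your far-field pointwise argument by this PDE comparison step is what makes the proof go through.
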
%

The proof of the previous theorem is a simple comparison in an outer cylinder that runs as one of Theorem 7.4 in \cite{FVV2020}, {since the limit \eqref{lim} is uniform in $x$ a long as $|x|\geq R > 0$ for $t>0$ small enough.}

$R$. To proceed, we first use the fact that for some small $R>0$, $F$ is positive in the  ball $B_{2R}(0)$ by the qualitative lower estimate, $F(y)\ge \zeta(y)\ge c_1>0$ (see Lemma \ref{lem.pos}). We also know that $F\ge 0$ everywhere. On the other hand, we known from the previous argument that \eqref{lim}

\medskip

From Theorem \ref{thm.lowbarr} we derive the positivity for small times
of the self-similar fundamental solution determined in Theorem \ref{thm.fundamental solution}. Furthermore, we have the following:

\begin{corollary}\label{thm.lowbeh}  If $F$ is the profile of a self-similar solution there are constants $c_1,c_2>0$ such that
\begin{equation}\label{compbelow}
F(x) \ge c_1 \, \underline{F}(x_1 \, c_2^{\alpha \sigma_1}, \cdots, x_N c_2^{\alpha \sigma_N})
\end{equation}
for every $|x|\ge R$,  if $R>0$ and $A_2$ is large enough. In particular, the profile $F$ decays at most like \
$O(|x_i|^{-\vartheta_i\gamma})$ in any coordinate direction.
\end{corollary}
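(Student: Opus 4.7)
The strategy is to freeze time in the parabolic comparison provided by Theorem~\ref{thm.lowbarr} and then undo the anisotropic self-similar scaling to convert it into a pointwise lower bound on the profile $F$. All the genuine analytic work has already been carried out in Theorem~\ref{thm.lowbarr}; what remains is essentially bookkeeping.

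First I would evaluate the inequality $\underline{U}(x,t)\le B(x,t)$ at the fixed time $t=\overline t$ furnished by Theorem~\ref{thm.lowbarr}. Using the explicit form \eqref{U under} of $\underline U$ and the self-similar representation of $B$ with time-shift $t_0=1$, this reads
$$
\overline t^{-\alpha}\,\underline F\!\left(\overline t^{-\alpha\sigma_1}x_1,\ldots,\overline t^{-\alpha\sigma_N}x_N\right)\le (\overline t+1)^{-\alpha}\, F\!\left(x_1(\overline t+1)^{-\alpha\sigma_1},\ldots,x_N(\overline t+1)^{-\alpha\sigma_N}\right)
$$
for every $|x|\ge R$, provided the parameter $A$ in $\underline F$ is chosen sufficiently large, as allowed by Proposition~\ref{P2}. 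I would then introduce the self-similar variable $y_i:=x_i(\overline t+1)^{-\alpha\sigma_i}$, so that $x_i=y_i(\overline t+1)^{\alpha\sigma_i}$ and the argument of $\underline F$ on the left becomes $y_i\,c_2^{\alpha\sigma_i}$, where $c_2:=(\overline t+1)/\overline t>1$. Setting $c_1:=c_2^{\alpha}$, the inequality rewrites as
$$
F(y)\ge c_1\,\underline F\!\left(c_2^{\alpha\sigma_1}y_1,\ldots,c_2^{\alpha\sigma_N}y_N\right),
$$
which is \eqref{compbelow}. The range condition $|x|\ge R$ translates into $|y|\ge R'$ for an explicit $R'$, because each factor $(\overline t+1)^{\alpha\sigma_i}$ is a fixed positive constant; hence the bound holds for all $|y|$ large enough.

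To extract the coordinate-wise decay rate, I would restrict this lower bound to the $i$-th axis, setting $y_j=0$ for $j\ne i$. Using the explicit expression \eqref{F under}, one finds
$$
\underline F(0,\ldots,c_2^{\alpha\sigma_i}y_i,\ldots,0)=\bigl(A+c_2^{\vartheta_i\alpha\sigma_i}|y_i|^{\vartheta_i}\bigr)^{-\gamma}\sim c\,|y_i|^{-\vartheta_i\gamma}\quad\text{as }|y_i|\to\infty,
$$
so that $F(0,\ldots,y_i,\ldots,0)\gtrsim |y_i|^{-\vartheta_i\gamma}$, which is the asserted decay rate. The only mildly delicate point in the whole argument is checking that the translation from $|x|\ge R$ to $|y|\ge R'$ indeed covers all sufficiently large $|y|$ in every coordinate direction; this is automatic from the positivity of the weights $(\overline t+1)^{\alpha\sigma_i}$ combined with the positivity of each $\sigma_i$ ensured by (H3). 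No further obstacle arises.
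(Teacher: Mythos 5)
Your proof is correct and follows the same route as the paper: the paper's own proof is the one-line remark that one should evaluate \eqref{compp} at $t=\overline t$, and your computation just fills in the bookkeeping (the change to profile variables $y_i=x_i(\overline t+1)^{-\alpha\sigma_i}$, the identification $c_2=(\overline t+1)/\overline t$, $c_1=c_2^\alpha$, and the restriction to a coordinate axis for the decay rate), all of which is right.
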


To prove the previous corollary it is enough to evaluate \eqref{compp} at
$t=\overline{t}$.

\noindent {\bf Remarks.} We can make this estimate on the decay rate  as close as we want to \ $O(|x_i|^{-p_i/(2-p_i)})$. In view of the already obtained upper bounds, these exponents are sharp.

\medskip

We can pass from the positivity of just the fundamental solution to the  strict positivity for general solutions. This uses a  variation of Theorem 7.6 in \cite{FVV2020} together with the positivity result for the solutions of the fractional $p$-Laplacian equation, which has been proved in \cite{VazFPL2-2020}, Section 6.

\begin{theorem}[Infinite propagation of positivity]\label{thm.genpos}
Any integrable  solution with continuous and nonnegative initial data and
positive mass is strictly  positive  a.e.  in $\ren\times (0,\infty)$.
\end{theorem}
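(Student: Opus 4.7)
The plan is to reduce to a simple bump initial datum and then propagate positivity everywhere by comparison with the sub-solution $\underline{U}$ constructed in Proposition \ref{P2}. The key feature of $\underline U$ is its strict positivity: $\underline U(x,t)>0$ for every $x\in\mathbb{R}^N$ and every $t>0$, as is plain from formula \eqref{F under}. First, I would invoke continuity of $u_0$ and positivity of its mass to produce a ball $B_{r_0}(x_0)$ on which $u_0\geq \varepsilon_0>0$; by translation invariance of \eqref{APL} one may take $x_0=0$, and by the $L^1$-comparison of Theorem \ref{parabolicexistence} it suffices to treat the solution $\widetilde u$ issuing from $\widetilde u_0=\varepsilon_0 \chi_{B_{r_0}(0)}$.

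Next I would establish a base inequality of the form
\begin{equation*}
\widetilde u(x,t_\ast)\geq \lambda\,\underline U(x,\tau)\quad \text{a.e. in } \mathbb{R}^N
\end{equation*}
for some small $t_\ast,\tau,\lambda>0$. Inside a ball $B_R(0)$ the bound will follow from an elementary short-time positivity estimate: by $L^1$-continuity of the semigroup and the strict positivity of $\widetilde u_0$ on $B_{r_0}(0)$, $\widetilde u(\cdot,t_\ast)$ stays bounded below by a positive constant on $B_{r_0/2}(0)$ for $t_\ast$ small, and one then selects $\lambda\,\underline U(\cdot,\tau)$ with $L^\infty$-norm below this constant. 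Outside the ball, the outer-cylinder comparison already used in Theorem \ref{thm.lowbarr} applies, provided $\tau$ and $\lambda$ are tuned so that $\lambda\,\underline U(x,\tau)$ decays in each coordinate direction at least as fast as the upper bound for $\widetilde u$ furnished by Theorems \ref{L1LI} and \ref{thm.barr}.

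Once this base inequality is secured, the sub-solution property of $\underline U$ and the parabolic comparison principle yield
\begin{equation*}
\widetilde u(x,t_\ast+s)\geq \lambda\,\underline U(x,\tau+s)>0,\quad s>0,\ \text{a.e. } x\in\mathbb{R}^N,
\end{equation*}
and since $t_\ast$ can be taken arbitrarily small (the whole scheme merely requires readjusting $\tau$ and $\lambda$) the strict positivity extends to all $t>0$, hence the same for the original $u$. The main obstacle is the base inequality itself: one must produce quantitative control on $\widetilde u(\cdot,t_\ast)$ both locally (a short-time lower bound near the origin) and at infinity (a tail decay matching $\underline U$). The local positivity is in the spirit of the fast-diffusion arguments of \cite[Section 6]{VazFPL2-2020}, whereas the outer matching in anisotropic coordinates is the analogue of \cite[Theorem 7.6]{FVV2020}, adapted to our setting via the scalings \eqref{ai}.
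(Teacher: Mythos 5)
Your plan follows the same essential route as the paper: reduce to a small compactly supported bump placed below $u_0$, then propagate positivity by comparing with the strictly positive sub-solution $\underline U$ of Proposition~\ref{P2}. Two steps, however, do not hold as written. First, $\lambda\,\underline U$ is not a sub-solution of \eqref{APL}: the operator $\sum_i(|u_{x_i}|^{p_i-2}u_{x_i})_{x_i}$ is $(p_i-1)$-homogeneous in each term, not $1$-homogeneous, so multiplying by a constant $\lambda\in(0,1)$ does not preserve the sub-solution property --- in fact, since $\underline U$ decays in time and $\lambda-\lambda^{p_i-1}<0$ for $p_i<2$, $\lambda\,\underline U$ tends to become a \emph{super}-solution. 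The correct device is the intrinsic mass-changing scaling $u\mapsto \lambda\,u(\lambda^{\beta_1}x_1,\dots,\lambda^{\beta_N}x_N,t)$ with $\beta_i=(2-p_i)/p_i$, which preserves (sub)solutions and shrinks the barrier; this is precisely the form appearing in Corollary~\ref{thm.lowbeh}, where the scaled argument $\underline F(x_1 c_2^{\alpha\sigma_1},\dots)$ is used, not a constant multiple of $\underline F$.

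Second, the local step is not justified: $L^1$-continuity of the semigroup gives no pointwise lower bound on $\widetilde u(\cdot,t_\ast)$ over $B_{r_0/2}(0)$; convergence in $L^1$ says nothing about infima. The tool the paper relies on for this is Lemma~\ref{lem.pos}, which does produce a nondegenerate lower bound $\zeta(y)\ge c_1>0$ on a small ball, but it requires the initial datum to be SSNI, bounded, compactly supported, and controlled by the barrier $G_{M,L_1}$. That is why the paper reduces to a \emph{continuous SSNI compactly supported} $\widetilde u_0\le u_0$ (a characteristic function is not continuous and does not plug into the cited machinery directly), invokes the mechanism of \cite[Theorem~7.6]{FVV2020} for that class, and then concludes the general case by the $L^1$ comparison principle. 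Once these two repairs are made --- use the intrinsic scaling in place of $\lambda\,\underline U$, and Lemma~\ref{lem.pos} in place of the $L^1$-continuity heuristic --- your outline matches the paper's argument.
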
%

\noindent {\sl Proof.} (i) Arguing as in the proof of Theorem 7.6 in \cite{FVV2020} we obtain the infinite propagation of positivity of $u$ when the initial datum $u_0$ is SSNI, continuous and compactly supported.

%

(ii) Take now a  continuous  initial datum $u_0\ge 0$. We can put below $u_0$ a smaller SSNI continuous  compactly supported initial datum $\widetilde{u}(x)$ as in point (i) around some point $x_0$,   and in particular $u_0(x)\geq \widetilde{u}(x)$ in $\mathbb{R}^N$. If $u_{1}(x,t)$ is the solution of the Cauchy problem with data $\widetilde{u}$, we use the  Comparison principle to obtain that $u(x,t)\geq  u_1(x,t)>0$ a.e. in $\mathbb{R}^N$ for every $t>0$. Hence $u$ is strictly positive in $\R^{N}$ in the
sense of measure theory.\normalcolor
%
$t_0-\ve<t<t_0+t_2-\ve$. After checking that $t_2$ does not depend on $\ve$ we conclude that $u(x,t_0)>0$.
\nc


\section{The orthotropic case} \label{sec.ortho}

In this Section we consider the equation \eqref{APL} in the \emph{orthotropic} case, namely when all exponents are equal, $p_{1}=...=p_{N}=p<2$. We have to restrict ourselves to this case to prove a uniqueness result for SSNI fundamental solutions, because we need some solution regularity that has not yet been proved (to our knowledge) in the general anisotropic case.\nlc

\subsection{Continuity of solutions}\label{sec.cont}

This Subsection is devoted in proving the continuity of mild solutions to
the Cauchy problem for equation \eqref{APL}, in the orthotropic case.
We first recall from Section \eqref{sec.basic} that the operator $L_h$ defined in \eqref{APLiso}
 generates a $L^{2}$ semigroup that can be extended to $L^{q}$ for any $q\geq1$ by the technique of continuous extensions of bounded operators. Indeed, the functional $\mathcal{J}$ is a Dirichlet form on $L^{2}$ (see for instance \cite[Theorem 3.6, Theorem 4.1]{GrilloCipriani}). As a consequence, due to the fact that $L_{h}$ is positively homogeneous, for a given
nonnegative datum $u_{0}\in L^{1}(\R^{N})\cap L^{\infty}(\R^{N})$ and the
smoothing effect \eqref{Linfty-L1} we can apply
\cite[Theorem 1]{BC81} and  find for all $q\geq1$
\begin{equation}
\|\partial_{t}u\|_{q}\leq C\frac{\|u_{0}\|_{q}}{t}.\label{BCr}
\end{equation}
Then, if we take $u_{0}\in L^{1}(\R^{N})$, $u_{0}\geq0$, for any $\tau>0$
and $t\geq0$ we get
\begin{equation*}
\|\partial_{t}u(t+\tau)\|_{q}\leq C\frac{\|u(\tau)\|_{q}}{t},
\end{equation*}
thus if we combine this estimate with the smoothing effect \eqref{Linfty-L1} we obtain for all $t\geq\tau$
\begin{equation}
\|\partial_{t}u(t)\|_{\infty}\leq C\tau^{-\alpha-1}\|u_{0}\|^{{p}\alpha/N}\label{LIPCONST}.
\end{equation}
Hence equation \eqref{APL} can be viewed as the elliptic anisotropic equation
\begin{equation}
A_{h}(u):=-\sum_{i=1}^{N}\left(|u_{x_i}|^{p-2}u_{x_{i}}\right)_{x_{i}}=f\label{homoganis}
\end{equation}
where $f:=\partial_{t}u(\cdot,t)$ is a bounded source term. Then this equation fits into the Lipschitz regularity theory of \cite{CCG07}, whose main result implies what follows
\begin{theorem}\label{CELCCUP}
There exists and universal constant $C>0$ such that for all $u\in W^{1,1}
(B_{2R}(x_{0}))\cap L^{\infty}(B_{2R}(x_{0}))$ such that $A_{h}(u)= f$ weakly in $B_{2R}(x_{0})$, where $f\in L^{\infty}(B_{2R}(x_{0}))$, the following estimate holds:
\begin{equation}
\sup_{x\in B_{R}(x_{0})}|\nabla u|\leq C\left\{\int_{B_{2R}(x_{0})}\left[1+\frac{1}{p}\sum|\partial_{x_{i}} u|^{p}+\|f\|_{L^{\infty}} |u|\right]dx\right\}^{\alpha},\label{LIPSPACE}
\end{equation}
where $C=C(p,N,R,\|f\|_{L^{\infty}})$ and $\alpha=\alpha(p,N)$.
\end{theorem}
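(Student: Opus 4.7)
The plan is to follow the classical strategy for Lipschitz regularity of degenerate/singular elliptic equations (as in the work of Uhlenbeck, DiBenedetto, and specifically the setting of \cite{CCG07}), adapted to the orthotropic structure. Since the estimate is about boundedness of $\nabla u$ in terms of an $L^p$-type energy plus a bounded source, the method is: regularize, differentiate, derive a Caccioppoli estimate on derivatives, perform a Moser iteration, and pass to the limit.

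First I would replace the singular operator $A_h$ by the nondegenerate and nonsingular approximation
\begin{equation*}
A_h^\varepsilon(u) := -\sum_{i=1}^N \partial_{x_i}\!\left[(\varepsilon^2+u_{x_i}^2)^{(p-2)/2} u_{x_i}\right],
\end{equation*}
and solve $A_h^\varepsilon(u^\varepsilon)=f^\varepsilon$ in $B_{2R}(x_0)$ with, say, the boundary trace of $u$; by standard elliptic theory $u^\varepsilon$ is smooth. All estimates must be performed on $u^\varepsilon$ with bounds uniform in $\varepsilon$, and then we pass to the limit using weak compactness and uniqueness.

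Next I would differentiate the equation in each coordinate direction $x_k$. Setting $w_k = u^\varepsilon_{x_k}$ and
$$
b_i(s) := (\varepsilon^2+s^2)^{(p-4)/2}\bigl[\varepsilon^2+(p-1)s^2\bigr]>0,
$$
the function $w_k$ solves the linearized equation
\begin{equation*}
-\sum_{i=1}^N \partial_{x_i}\!\left[b_i(u^\varepsilon_{x_i})\, \partial_{x_i} w_k\right] = f_{x_k}.
\end{equation*}
Multiplying by $\varphi\, w_k\, (w_k^2+\delta)^{q/2}$ with $\varphi$ a smooth cutoff, and summing in $k$, one derives a Caccioppoli inequality for the function $G := \tfrac12 \sum_k w_k^2 = \tfrac12|\nabla u^\varepsilon|^2$. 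The key point is that the anisotropic, scalar coefficient $b_i(u^\varepsilon_{x_i})$ involves only the $i$-th partial derivative, so one obtains a weighted Caccioppoli estimate where the $i$-th direction is controlled by $b_i$, the well-known structural feature of the orthotropic operator. The presence of $f$ appears through integration by parts, producing the $\|f\|_{L^\infty}|u|$ term on the right.

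The third step is the iteration. Starting from the Caccioppoli inequality, one combines it with an anisotropic Sobolev embedding (the harmonic-mean Sobolev inequality of Troisi), and runs a Moser iteration on powers of $G$ over shrinking balls $B_{R(1+2^{-n})}(x_0)$. This yields the $L^\infty$ bound
$$
\sup_{B_R(x_0)} |\nabla u^\varepsilon| \le C\left\{\int_{B_{2R}(x_0)}\Bigl[1+\tfrac1p\sum_i |u^\varepsilon_{x_i}|^p +\|f\|_{L^\infty}|u^\varepsilon|\Bigr]dx\right\}^{\alpha(p,N)},
$$
uniformly in $\varepsilon$. The main obstacle, and the true content of \cite{CCG07}, is precisely this iteration: because each $b_i$ sees only $u^\varepsilon_{x_i}$, the ellipticity of the linearized operator may degenerate in one direction even while remaining large in others, so one cannot use the isotropic $p$-Laplacian machinery directly; one must use the anisotropic functional setting and a delicate handling of the degeneracy pattern as $1<p<2$. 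Once the uniform estimate is in place, passing $\varepsilon\to 0$ via weak-$*$ compactness of $\nabla u^\varepsilon$ and lower semicontinuity of the norm gives the stated inequality for $u$.
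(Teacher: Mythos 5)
You should first note that the paper does not actually prove Theorem \ref{CELCCUP}: it is quoted from Celada--Cupini--Guidorzi \cite{CCG07}, and the surrounding text says only that the parabolic equation, rewritten with $f = u_t$ as a bounded source, ``fits into the Lipschitz regularity theory of \cite{CCG07}, whose main result implies what follows.'' So your sketch is being compared not against an in-paper argument but against a black-box citation; supplying a self-contained sketch is a reasonable reaction to such a statement, and the scaffolding you lay out---regularize with
$A_h^\varepsilon$, differentiate in $x_k$, obtain the linearized equation with scalar weights $b_i(u^\varepsilon_{x_i})$, derive a Caccioppoli estimate for $|\nabla u^\varepsilon|^2$, run a Moser iteration, pass to the limit---is indeed the right family of ideas. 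Your computation of
$b_i(s)=(\varepsilon^2+s^2)^{(p-4)/2}[\varepsilon^2+(p-1)s^2]$ is correct, and for $1<p<2$ it is positive, as needed.

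Where the sketch is genuinely thin is exactly the step you yourself flag as the ``main obstacle.'' For $1<p<2$ the weight $b_i(s)\sim(p-1)|s|^{p-2}$ as $|s|\to\infty$, so the linearized operator loses ellipticity from below precisely in the large-gradient regime one is trying to control, and it does so direction by direction. Invoking ``Troisi's harmonic-mean anisotropic Sobolev inequality'' does not by itself address this: in the orthotropic case all $p_i$ equal $p$, so the anisotropy lives in the coefficients $b_i(u_{x_i})$, not in the integrability exponents, and the iteration must be adapted to that weight pattern (this is the actual technical content of \cite{CCG07}, and of the related orthotropic works \cite{BouBra, BBLV, BBLV21} the paper cites). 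Also, your explanation that the $\|f\|_{L^\infty}|u|$ term ``appears through integration by parts'' from $f_{x_k}$ is not quite right: after testing, the derivative of $f$ is moved back and the contribution is controlled through the zeroth-order pairing $\int f u$, which is then bounded by $\|f\|_\infty\int|u|$; the differentiated source $f_{x_k}$ itself never needs to exist. None of this makes the overall route wrong---it is the correct strategy and matches the cited literature---but the passages you label as the hard part are the ones a referee would ask you to actually carry out, and the paper deliberately delegates them to \cite{CCG07} instead.
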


Then we are in position to prove the following result
\begin{theorem}\label{localLipcont}
Assume that $u_{0}\in L^{1}(\R^{N})$ and let $u$ be the mild solution to equation \eqref{APL}, satisfying the initial condition \eqref{IC}. Then, for all $\tau>0$, $u\in L^{\infty}(\R^{N}\times[\tau,+\infty))$ and $u$ is global Lipschitz continuous in $\R^{N}\times[\tau,\infty),$ with a bound
\begin{equation}\label{LIPSPACE2}
\sup_{\R^{N}\times[\tau,\infty)}|\nabla_{x,t}u(x,t)|\leq C(N,p,M,\tau,u_{0}).
\end{equation}
\end{theorem}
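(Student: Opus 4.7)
The plan is to combine three ingredients already displayed in this subsection: the $L^1$--$L^\infty$ smoothing effect of Theorem \ref{L1LI}, the time derivative bound \eqref{LIPCONST}, and the elliptic Lipschitz estimate of Theorem \ref{CELCCUP}. The strategy is to freeze the time variable and view the equation as a stationary anisotropic elliptic problem with a globally bounded right-hand side, and then patch together the resulting local spatial gradient bounds into a global one.

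First, fix $\tau>0$. By the smoothing effect we have the uniform bound $\|u(t)\|_\infty\leq C(N,p,M)\,\tau^{-\alpha}\,M^{p\alpha/N}$ for all $t\geq \tau/2$, so in particular $u\in L^\infty(\R^N\times[\tau/2,\infty))$. Next, starting the evolution afresh at time $\tau/2$ from the datum $u(\cdot,\tau/2)\in L^1\cap L^\infty\subset L^2$, the estimate \eqref{LIPCONST} (which uses homogeneity of $L_{p,h}$, hence the orthotropic assumption) yields
\[
\|\partial_t u(\cdot,t)\|_\infty \leq C\,\tau^{-\alpha-1}\,M^{p\alpha/N} \qquad \text{for all }t\geq \tau,
\]
with $C=C(N,p)$. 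Thus, at each fixed $t\geq \tau$, the function $x\mapsto u(x,t)$ solves the stationary orthotropic equation $A_h(u(\cdot,t))=f(\cdot,t)$ with source $f:=-\partial_t u$ satisfying $\|f(\cdot,t)\|_\infty\leq C(N,p,M,\tau)$.

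Second, to apply Theorem \ref{CELCCUP} at a given point $x_0\in\R^N$ with some fixed radius $R>0$, we need a uniform-in-$x_0$ control of the right-hand side of \eqref{LIPSPACE}. The terms involving $|u|$ and the constant are already under control from the $L^\infty$ bound above, so the only nontrivial point is the local $L^p$ norm of $\nabla u$. For this, starting again from time $\tau/2$ and using the energy identity \eqref{firstenergy} and its consequence \eqref{esttimeder1}, we have
\[
\mathcal{J}(u(t))\leq \frac{C\,\|u(\tau/2)\|_2^2}{t-\tau/2}\leq \frac{C\,\|u(\tau/2)\|_\infty\|u_0\|_1}{\tau/2}\leq C(N,p,M,\tau)\qquad \text{for all }t\geq \tau,
\]
using the $L^1$ contraction to estimate $\|u(\tau/2)\|_1\leq M$ and the smoothing effect to estimate $\|u(\tau/2)\|_\infty$. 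This is a \emph{global} $L^p$ bound on the spatial gradient, in particular uniform in $x_0$ when integrated over $B_{2R}(x_0)$. Plugging these uniform bounds into \eqref{LIPSPACE} of Theorem \ref{CELCCUP} yields $\sup_{x\in B_R(x_0)}|\nabla_x u(x,t)|\leq C(N,p,M,\tau,R)$ uniformly in $x_0\in\R^N$ and $t\geq \tau$, whence a global spatial Lipschitz bound $\sup_{\R^N}|\nabla_x u(\cdot,t)|\leq C(N,p,M,\tau)$.

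Combining this spatial gradient bound with the time derivative bound yields \eqref{LIPSPACE2}. The main technical point, in my view, is the passage from the \emph{local} elliptic estimate \eqref{LIPSPACE} of Theorem \ref{CELCCUP} to a bound that is \emph{uniform} in $x_0\in\R^N$; this is precisely what the global energy inequality $\mathcal{J}(u(t))\leq C(N,p,M,\tau)$ provides. A minor issue worth verifying is that $u(\cdot,t)\in W^{1,1}(B_{2R}(x_0))\cap L^\infty(B_{2R}(x_0))$ for a.e. $t\geq \tau$ so that Theorem \ref{CELCCUP} can be invoked pointwise in time; this follows from the regularity of the mild solution constructed by the implicit time discretization scheme (equivalently, from the $L^2$ semigroup theory of Section \ref{sec.basic} restarted at $\tau/2$), together with the energy bound just obtained. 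A standard approximation argument (truncating $u_0$ to $L^1\cap L^\infty$ data and passing to the limit using the contraction estimates) completes the extension to general $u_0\in L^1(\R^N)$.
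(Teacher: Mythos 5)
Your proposal is correct and follows essentially the same route as the paper: use the $L^1$--$L^\infty$ smoothing effect for the $L^\infty$ bound, the B\'enilan--Crandall estimate \eqref{LIPCONST} for the time-Lipschitz bound, then freeze time and apply the elliptic Lipschitz estimate of Theorem \ref{CELCCUP} to the stationary equation $A_h(u(\cdot,t))=-\partial_t u(\cdot,t)$ with bounded source. The one place where you add detail beyond the paper's terse statement is the observation that the uniformity in $x_0$ of the right-hand side of \eqref{LIPSPACE} comes from the \emph{global} energy bound $\mathcal{J}(u(t))\le C(N,p,M,\tau)$ (obtained via \eqref{esttimeder1} restarted at $\tau/2$ and the interpolation $\|u(\tau/2)\|_2^2\le\|u(\tau/2)\|_\infty\|u(\tau/2)\|_1$); the paper compresses this into the parenthetical remark ``recall that $\nabla u(t)\in L^p(\R^N)$.'' Your closing remark about a further approximation step for general $u_0\in L^1$ is unnecessary: once $u(\cdot,\tau/2)\in L^1\cap L^\infty\subset L^2$ by smoothing, the $L^2$ semigroup machinery applies directly to the restarted flow, which is exactly what you already used.
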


\begin{proof}
The fact that $u\in L^{\infty}(\R^{N}\times[\tau,+\infty))$ immediately follows from the $L^{1}$-$L^{\infty}$ smoothing effect \eqref{Linfty-L1}. Moreover, by estimate \eqref{LIPCONST} we have that $u$ is Lipschitz continuous in time for $t\geq \tau$. Finally, writing the parabolic equation as in \eqref{homoganis}, Theorem \ref{CELCCUP} yields global Lipschitz continuity in space: indeed, observe that using \eqref{BCr}, and \eqref{LIPCONST}, the Lipschitz estimate \eqref{LIPSPACE} implies (recall that $\nabla u(t)\in L^{p}(\ren))$ for any $t>0$ by Section \ref{sec.basic})
\[
|\nabla u(x_{0},t)|\leq C(N,p,M,\tau,u_{0})
\]
for all $x_{0}\in \ren$. Then $u$ is globally Lipschitz continuous in $\R^{N}\times[\tau,\infty)$.
\end{proof}
\begin{remark}
The local Lipschitz regularity in space in the range $p<2$ descends from the main result in \cite[Theorem 1]{PisanteVerde}. For the case $p>2$, gradient estimates for parabolic orthotropic equations has been recently established in \cite{BBLV21}.
\end{remark}


\subsection{Uniqueness of SSNI fundamental solutions
}\label{sec.uniq}

Now we give a stronger uniqueness result for nonnegative SSNI fundamental
solutions.

\begin{theorem}\label{thm.uniq.fundamental solution}
Let $p_i=p$ with $p_c<p<2$. The  nonnegative self-similar fundamental solution of equation \eqref{APL} with given mass $M>0$, given by
\[
B(x,t)=t^{-\alpha}F(t^{-\frac{\alpha}{N}}x)
\]
with the explicit profile $F$ of mass $M$ given by \eqref{Fp<2}, is the unique fundamental SSNI solution of mass $M$.
\end{theorem}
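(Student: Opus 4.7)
In self-similar variables, the task becomes: any SSNI stationary solution $F_1$ of the rescaled equation \eqref{APLs} with mass $M$ coincides with the explicit profile $F^*$ from \eqref{Fp<2} (calibrated to mass $M$ via the scaling $\mathcal{T}_k$). Equivalently, the corresponding fundamental solution $B_1$ equals $B^*$.

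First, the orthotropic hypothesis $p_1 = \cdots = p_N = p$ triggers Theorem \ref{localLipcont}: any such fundamental solution is globally Lipschitz on $\R^N \times [\tau, \infty)$ for every $\tau > 0$. Hence $F_1 \in W^{1,\infty}(\R^N) \cap L^1(\R^N)$, it is strictly positive by Theorem \ref{thm.genpos}, SSNI by Proposition \ref{prop.ssni}, and decays at the sharp rate $|y_i|^{-p/(2-p)}$ in each coordinate direction via the upper barrier of Proposition \ref{P1} and the lower barrier of Corollary \ref{thm.lowbeh}; the same is true for $F^*$.

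Next, consider the rescaled evolution \eqref{APLs} starting from any SSNI datum $v_0$ of mass $M$ bounded by the upper barrier of Theorem \ref{thm.barr}. The $L^1$-contraction of Theorem \ref{parabolicexistence}, together with the Lipschitz regularity and the uniform $L^\infty$ barrier, yields precompactness of the orbit $\{v(\tau)\}_{\tau>0}$ in $L^1_{\mathrm{loc}}$. Hence the $\omega$-limit set $\omega(v_0)$ is nonempty and is composed of SSNI stationary solutions of \eqref{APLs} with mass $M$. Since $F_1$ is itself stationary, the constant orbit $v(\tau) \equiv F_1$ has $\omega(F_1) = \{F_1\}$. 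The task therefore reduces to proving $\omega(v_0) = \{F^*\}$ for every admissible $v_0$, which applied to $v_0 = F_1$ immediately gives $F_1 = F^*$. To this end I would introduce a Lyapunov functional adapted to $F^*$ (for instance a Bregman-type relative entropy, or a weighted $L^2$ distance) whose dissipation rate along \eqref{APLs} exploits the separated identity $|F^*_{y_i}|^{p-2} F^*_{y_i} + (\alpha/N) y_i F^* = 0$ satisfied by the explicit profile together with the monotonicity of $s \mapsto |s|^{p-2} s$; the dissipation then vanishes only at $v = F^*$.

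The principal obstacle is the construction of this Lyapunov functional and the rigorous verification of its strict dissipation, given the degeneracy of the $p$-Laplacian at critical points of $F^*$. The Lipschitz regularity of Theorem \ref{localLipcont}, available only in the orthotropic setting, is precisely what allows such computations to be justified, explaining why the argument does not extend to the general anisotropic case. A plausible alternative route, should the Lyapunov calculation prove too intricate, is a direct monotone comparison argument exploiting the sharp two-sided decay rates from the first step together with the separated identity, ruling out any pointwise discrepancy between $F_1$ and $F^*$ without going through an $\omega$-limit analysis.
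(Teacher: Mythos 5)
Your preparatory material is on the right track: invoking Theorem~\ref{localLipcont} for Lipschitz regularity (the key feature available only in the orthotropic case), positivity from Theorem~\ref{thm.genpos}, and the two-sided decay rates from Proposition~\ref{P1} and Corollary~\ref{thm.lowbeh} are all ingredients the paper also uses. But the core of your argument is left unfinished, and the way you propose to finish it is problematic. You reduce the uniqueness of SSNI stationary profiles to the statement $\omega(v_0) = \{F^*\}$ for \emph{every} admissible datum $v_0$ -- that is, to asymptotic stability of the rescaled flow -- and then plan to establish that stability via an unspecified Lyapunov or relative-entropy functional. You yourself flag the construction of this functional as "the principal obstacle," and it is indeed where the work would have to happen; as written there is no proof. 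Worse, the reduction inverts the logical order used in the paper: Section~\ref{sec.asymp} \emph{derives} the asymptotic convergence $u_\lambda \to B_M$ \emph{from} the uniqueness Theorem~\ref{thm.uniq.fundamental solution}. Proving asymptotic stability first, in order to deduce uniqueness, is harder (the $\omega$-limit analysis needs tightness, identification of the limit as a stationary solution, and a LaSalle-type dissipation argument -- precisely the missing Lyapunov machinery), and the paper avoids this entirely.

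The paper's actual argument is quite different and more elementary. Supposing $B_1 \ne B$ with the same mass, it forms $B^* = \max\{B_1, B\}$ and $B_* = \min\{B_1, B\}$, observes that these are again Lipschitz solutions, and that the open sets $\Omega_1 = \{B_1 < B\}$ and $\Omega_2 = \{B_1 > B\}$ are both nonempty because the two solutions share the same total mass at every time. The key step is then to write the difference $w = B^* - B \ge 0$ as a solution of a \emph{linear} parabolic equation $w_t = \sum_i (a_i(x,t) w_{x_i})_{x_i}$ with coefficients $a_i$ of the order $\xi^{p-2}$; away from the coordinate skeleton $\bigcup_i \{x_i = 0\}$ the explicit profile $F$ has nonvanishing gradient, so the $a_i$ are locally uniformly elliptic (an argument using a one-dimensional integral representation of $a_i$ and an algebraic estimate). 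The Moser parabolic Harnack inequality then forces $w \equiv 0$ in each open quadrant, and combining this with the symmetry/SSNI structure and mass conservation yields a contradiction. Your "plausible alternative route" gestures toward a comparison argument but does not identify this max/min construction, the linearization, the nondegeneracy off the skeleton, or the Harnack step -- which are the actual content of the proof. As it stands, your proposal contains a genuine gap where the uniqueness mechanism should be.
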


In particular the explicit self-similar fundamental solution \eqref{Fp<2}
 is the unique nonnegative fundamental SSNI solution of equation \eqref{APL} with given mass $M>0$.

\noindent {\sl Proof.} (i) By contradiction let us suppose there exist another SSNI fundamental solution $B_{1}$ to \eqref{APL}, with same mass $M$. We observe that $B_{1}$ satisfies the Lipschitz continuous stated in Theorem \ref{localLipcont}.

 We shall really need the non-degeneracy properties of $B$. A key point in the argument is that two different solutions with the same mass must intersect. We define the maximum of the two solutions $B^*=\max\{B_1,B\}$
and the minimum $B_*=\min\{B_1,B\}$. Obviously $B^*,$ and $B_*$ are positive and Lipschitz continuous solutions (w.r. to each variable) to \eqref{APL}. Under the assumption that the two functions $B_1$ and $B$ are not
the same, we define the open sets  $\Omega_{1}=\{(x,t)\in Q:\,B_1(x,t)<B(x,t)\}$ and { $\Omega_{2}=\{(x,t)\in Q:\,B_1(x,t)>B(x,t)\}$}, where as usual $Q=\R^{N}\times(0,\infty)$. Then $\Omega_{1}$ and $\Omega_{2}$ are disjoint and both are non-void open sets since the integrals of both functions over  $Q_{T}=\R^{N}\times(0,T)$ are the same for all $T>0$. In particular,  neither of them can be dense in $Q$. Moreover, $\Omega_{1}$ is the set where $B_*<B$ and $\Omega_{2}$ is the set where $B^*>B$.

(ii) We now show that the  situation $B_1\ne B$ is not possible because of strong maximum principle arguments applied to the difference of  the  two equations concerning $B^*$ and $B$. It is here that we use the fact that all the spatial derivatives of $B$ are different from zero away from
the set of points where a least one coordinate is zero, a set that we may
call the \sl coordinate skeleton\rm.  Its complement in $Q$ is given by
\[
\Omega=Q\setminus \bigcup_{i=1}^{N}\mathcal{A}_{i}
\]
where $\mathcal{A}_{i}=\left\{(x,t)\in Q:\,x_{i}=0\right\}$, for $i=1,...,N$, is an open set, the  union of  symmetric copies of $Q_i=\{(x,t)\in Q: \, x_i>0 \quad \forall i \}$. We will work in $\Omega$ to avoid the presence of degenerate points. We do as follows: we put  $$w(x,t)=B^*(x,t)-B(x,t),$$ then $w$ is nonnegative and continuous and satisfies (in the weak sense; recall that the stationary profiles are differentiable a.e.)
\begin{equation}\label{eq.ell.a}
w_{t}= \sum_i\left(a_i(x,t)w_{x_i}\right)_{x_i},
\end{equation}
The leading coefficients of the above equation are
\begin{equation}
a_i(x,t)=\frac{|B^*_{x_i}|^{p-2}B^*_{x_i}-|B_{x_i}|^{p-2}B_{x_i}}{B^*_{x_i}-B_{x_i}}\ge \frac{C_p}
{ |B^*_{x_i}|^{2-p}+|B_{x_i}|^{2-p}}>C_1>0,
\end{equation}
thus the locally Lipschitz continuity of the solutions given by Theorem \ref{localLipcont}, all  the $a_{i}(x,t)$ are locally bounded below by $C_1>0$. We see that each $a_{i}(x,t)$ is of the order of $\xi^{p-2}(x,t)$ for $\xi$ { between $|B^*_{x_i}|$ and $|B_{x_{i}}|$}. The problem is the bound from above, the equation might be not uniformly elliptic if we approach the skeleton.

(iii) Under our assumption $B_1\ne B$ we know that $w>0$ somewhere. By continuity we will have $w\ge c>0$ in a ball that does not intersect the skeleton, contained in a $Q_{i}$.  Then, $w$ cannot be zero everywhere in $\Omega$. Now, assume there is  point $P=(x,T)$ of intersection between $B^*$ and $B$, having all the coordinate values  nonzero, $x_i\ne 0$ for all $i$. Then $w(P)=0$. For definiteness, let us be in $Q_{1}$. In such a case $ |B_{x_i}|>c_i$ is bounded in a neighborhood of
$P$ for all $i$, and that means that all $a_i(x,t)$ are bounded above as announced in (ii). Indeed, arguing as in \cite[Lemma 5.1]{BobTak}, we can
write
\[
a_i(x,t)=\frac{|B_{x_i}|^{p-2}B_{x_i}-|B^*_{x_i}|^{p-2}B^*_{x_i}}{B_{x_i}-B^*_{x_i}}=(p-1)\int_{0}^{1}\left|s B_{x_i}+(1-s)B^*_{x_i}
\right|^{p-2}ds.
\]
We use the algebraic inequality
\[
\int_{0}^{1}|a+sb|^{p-2}ds\leq C_{p}\left(\max_{s\in[0,1]}|a+sb|\right)^{p-2},
\]
valid for all $a,b\in \R$ such that $|a|+|b|>0$, with the choice $a=B^*_{x_i}$ and $b=B_{x_i}-B^*_{x_i}$ (so that $|a|+|b|>c_{i}$ in the neighborhood of $P$), hence
\[
a_i(x,t)\leq C_{p}\left(\max_{s\in[0,1]}\left|s B_{x_i}+(1-s)B^*_{x_i}
\right|\right)^{p-2}\leq C.
\]
Considering the parabolic equation \eqref{eq.ell.a} in a small cylinder $Q_{\varepsilon,\tau,T}=B_\ve(x)\times(\tau,T)$,
the linear parabolic Harnack inequality (see \cite{Moser1,Moser2}) applies to it and we can conclude that necessarily $w$ must vanish identically in $Q_{\epsilon,\tau,T}$. By extension of the same principle $w$ must vanish in the whole $Q_{1}$, i.e. $B^*>B_{1}$ everywhere in $Q_{1}$.  What is important, this implies that $Q_{1}$ does not contain any point of $\Omega_{1}$. We now use the symmetry with respect to the axes and invariance
by translation with respect to any hyperplane $t=T$ and we arrive at the conclusion that $\Omega_{1}$ does not contain any interior point of any
quadrant. This is impossible.  \qed

\subsection{Asymptotic behaviour 
}\label{sec.asymp}

 In the orthotropic case, once the unique  SSNI  self-similar fundamental
solution $B_M$ is determined for any mass $M>0$, it is natural to expect that this is the good candidate to be the attractor for solutions to the Cauchy problem for equation \eqref{APL}. Indeed, we have the following result:

\begin{theorem}
Let $p_i=p$ for all $i$ with $p_c<p<2$. Let $u(x,t)\ge 0$ be the unique
weak solution of the Cauchy problem of the orthotropic equation \eqref{APL} (i.e., $p_i=p$)  with initial data $u_{0}\in L^{1}(\R^{N})$ of mass $M$. Let $B_{M}$ the Barenblatt solution
\begin{equation}\label{BM}
B_{M}(x,t)=t^{-\alpha}F(t^{-\frac{\alpha}{N}}x)
\end{equation}
with $F$ defined in \eqref{Fp<2} having mass $M$. Then,
\begin{equation}\label{conv.l1}
\lim_{t\rightarrow\infty}\|u(t)-B_{M}(t)\|_{1}=0.
\end{equation}
The convergence holds in the $L^{\infty}$ norm in the proper scale
\begin{equation}\label{conv.linfty}
\lim_{t\rightarrow\infty}t^{\alpha}\|u(t)-B_{M}(t)\|_{\infty}=0.
\end{equation}
where $\alpha$ is given by \eqref{alfa}. Weighted convergence in $L^q(\ren)$, $1<q<\infty$ is obtained by interpolation.
\end{theorem}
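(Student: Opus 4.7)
The strategy is the continuous rescaling / $\omega$-limit method of Kamin--Vázquez type, making essential use of the uniqueness Theorem \ref{thm.uniq.fundamental solution} and the Lipschitz regularity of Theorem \ref{localLipcont}. Passing to self-similar variables via \eqref{NewVariables} with $t_0 = 1$, let $v(y,\tau)$ denote the corresponding solution of the rescaled equation \eqref{APLs} and let $F = F_M$ be the Barenblatt profile, so that $B_M(x,t) = t^{-\alpha}F(t^{-\alpha/N}x)$. Since the rescaling preserves the $L^1$-norm, \eqref{conv.l1} rewrites as $\|v(\cdot,\tau) - F\|_1 \to 0$ as $\tau\to\infty$, and \eqref{conv.linfty} rewrites as $\|v(\cdot,\tau) - F\|_\infty \to 0$.

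The first step is to show that the orbit $\{v(\cdot,\tau):\tau\geq 1\}$ is relatively compact in $L^1(\mathbb{R}^N)$. The uniform $L^1$-bound follows from mass conservation; the smoothing effect of Theorem \ref{L1LI} yields a uniform $L^\infty$-bound for $\tau \geq 1$; the upper barrier of Theorem \ref{thm.barr} controls the tails and provides tightness; and the Lipschitz estimate of Theorem \ref{localLipcont}, transferred to the $(y,\tau)$ variables, supplies the equicontinuity.

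The central step is the identification of the $\omega$-limit. Given a subsequence $\tau_n \to \infty$ with $v(\cdot,\tau_n) \to \phi$ in $L^1$, the $L^1$-contractivity of the rescaled semigroup $\tilde S_s$ inherited from Theorem \ref{parabolicexistence} gives $v(\cdot,\tau_n+s) \to \tilde S_s\phi$ in $L^1$ for every $s>0$, so the $\omega$-limit set is forward-invariant under $\tilde S_s$. Since $F$ itself is a fixed point of $\tilde S_s$, the function $\tau \mapsto \|v(\cdot,\tau)-F\|_1$ is non-increasing and admits a limit $\ell \geq 0$, whence $\|\tilde S_s\phi - F\|_1 = \ell$ for all $s \geq 0$. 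Using the dissipation structure of \eqref{APLs} (analogous to \eqref{secondenergy}) along the trajectory $\tilde S_s\phi$, we deduce that $\phi$ must be stationary for \eqref{APLs}, \emph{i.\,e.,} the profile of a non-negative self-similar fundamental solution of \eqref{APL} with mass $M$ (mass $M$ being preserved under the weak limit thanks to tightness). By Proposition \ref{prop.ssni} this profile is SSNI, and by the uniqueness Theorem \ref{thm.uniq.fundamental solution} we conclude $\phi = F$. Independence of the subsequence then gives the full $L^1$-convergence \eqref{conv.l1}.

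Finally, the uniform convergence \eqref{conv.linfty} follows from the $L^1$-convergence combined with the equicontinuity supplied by the Lipschitz bound of Theorem \ref{localLipcont}, the uniform $L^\infty$-bound above, and the tail control provided by the common integrable upper barrier, via a standard Ascoli-type argument. Weighted $L^q$-convergence for $1<q<\infty$ is then immediate from the interpolation inequality
$$\|v(\cdot,\tau) - F\|_q \leq \|v(\cdot,\tau) - F\|_1^{1/q}\,\|v(\cdot,\tau) - F\|_\infty^{1-1/q}.$$
The hardest point will be the identification of $\omega$-limit points as stationary solutions of \eqref{APLs}; should a direct Lyapunov/entropy-dissipation argument prove awkward, an effective alternative is to exploit the concentration comparison of Theorem \ref{Thm Comp}, which in the orthotropic case holds with $\Lambda = 1$ and allows one to squeeze the orbit between symmetrized barriers concentrating to $F$.
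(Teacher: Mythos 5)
Your route is the $\omega$-limit / dynamical-systems method in the self-similar variables $(y,\tau)$, whereas the paper runs the Kamin--V\'azquez ``$\lambda$-rescaling'' method: it sets $u_\lambda(x,t)=\lambda^\alpha u(\lambda^{\alpha/N}x,\lambda t)$, obtains \emph{space-time} local compactness directly from the energy estimates \eqref{firstenergy}, \eqref{esttimeder1}, \eqref{esttimeder2} (which hold in the original $u$-variables), identifies any subsequential limit $U$ as a weak solution of the Cauchy problem with mass $M$ for all $t>0$ (dominated convergence against a translated Barenblatt supersolution), shows $U$ is a fundamental solution and SSNI via Aleksandrov's principle, and then invokes the uniqueness Theorem~\ref{thm.uniq.fundamental solution}. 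The compactness and identification are therefore done in the $u$-variables, where the $L^2$ gradient-flow structure is available.

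The genuine gap in your proposal is the identification of $\omega$-limit points as \emph{stationary} solutions of \eqref{APLs}. You argue that $\tau\mapsto\|v(\cdot,\tau)-F\|_1$ is non-increasing (correct, by $L^1$-contractivity and since $F$ is a fixed point) and hence constant equal to $\ell$ along $\tilde S_s\phi$; but constancy of the $L^1$-distance to one particular fixed point does not force $\phi$ to be a steady state -- the map $w\mapsto\|w-F\|_1$ is not a strict Lyapunov functional. You then appeal to a ``dissipation structure analogous to \eqref{secondenergy}.'' That identity, $\frac{d}{dt}\mathcal J(u(t))=-\|u_t\|_2^2$, is the $L^2$ gradient-flow structure of $u_t=-\mathcal A(u)$; it does \emph{not} carry over verbatim to the rescaled equation \eqref{APLs}, because the confinement term $\alpha\sigma_i(y_i v)_{y_i}$ is not the $L^2$-gradient of any local functional, so $\eqref{APLs}$ is not of the form $v_\tau=-\partial\mathcal E(v)$ in $L^2$. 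An entropy/dissipation inequality for \eqref{APLs} (of Fokker--Planck/Wasserstein type) would indeed close this step, but the paper never constructs one and it is not a triviality in the anisotropic $p$-Laplacian setting. Your fallback -- the concentration comparison of Theorem~\ref{Thm Comp} -- is also only gestured at, and note that $\Lambda$ from \eqref{lambda} is \emph{not} generally equal to $1$ in the orthotropic case, so that comparison would give $\overline p$-Laplacian bounds with a nontrivial diffusivity constant, not a squeeze to $F$ itself.

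A smaller point: the tightness via the upper barrier of Theorem~\ref{thm.barr} requires $v_0\le G_{M,L_1}$; for arbitrary $u_0\in L^1(\R^N)$ this is not immediate, which is why the paper first carries out the full argument for bounded, compactly supported data (where comparison with a delayed Barenblatt $B_{M'}$ supplies the dominating barrier and gives both tightness and the Aleksandrov symmetry) and then extends to general $L^1$ data by density and the $L^1$-contraction. Your proof should make the same reduction explicit. The compactness/Ascoli arguments you sketch (uniform $L^\infty$ bound, Lipschitz bound transferred via $v(\cdot,\tau)=u_{e^\tau}(\cdot,1)$, tail control, Gagliardo--Nirenberg) do match the paper's step (iv) once $\phi$ has been identified, and the final interpolation is fine. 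But as written, the central identification step does not go through.
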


\begin{proof}
First, let us observe that the smoothing effect estimate \eqref{Linfty-L1} implies in particular that $u(t)\in L^{2}(\R^{N})$ for all $t\geq\tau$,
for any $\tau>0$, so that $u$ is the solution $\eqref{APL}$ for $t\geq\tau$ with datum in $L^{2}(\R^{N})$. It follows from the theory that  $u$ is
a \emph{strong semigroup} $L^{2}$ solution, as explained in
Section \ref{sec.basic}, meaning that the first and the second energy estimate \eqref{firstenergy}, \eqref{secondenergy} hold in any time interval
$(\tau,T)$. Let us define now the family of rescaled solutions. For  all $\lambda>0$ we put
\[
u_{\lambda}(x,t)=\lambda^{\alpha}u(\lambda^{\frac{\alpha}{N}}x,\lambda t).
\]
By the mass invariance it follows that, for all $\lambda>0$,
\[
\|u_{\lambda}(\cdot,t)\|_{1}=M=\|u(\cdot,t)\|_{1}
\]
and the smoothing estimate \eqref{Linfty-L1} yields
 for any $\bar t>0$
\begin{equation}\label{lambda1}
\|u_{\lambda}(\cdot,{\bar t})\|_{\infty}=\lambda^{\alpha}\|u_{\lambda}(\cdot,\lambda {\bar t})\|_{\infty}\leq C {\bar t}^{-\alpha} M^{p\alpha/N}.
\end{equation}
Then since the norms $\|u_{\lambda}(\cdot,{\bar t})\|_{1}$
and $\|u_{\lambda}(\cdot,\bar t)\|_{\infty}$ are equi-bounded w.r. to $\lambda$, we have by interpolation that the norms $\|u_{\lambda}(\cdot,{\bar t})\|_{p}$ are equi-bounded for all $p\in [1,\infty]$. Now we fix $\color{magenta}\bar t>0$, so that by the previous remark $u({\bar t})\in L^{2}(\R^{N})$ and we can use the first energy
estimate \eqref{firstenergy} for $t\geq {\bar t}$:
\[
\sum_{i=1}^{N}\int_{{\bar t}}^{t}\int_{\R^{N}}|u_{x_{i}}|^{ p}\,dx\,d\tau\leq \frac{1}{2}\|u({\bar t})\|_{2}^{2}.
\]
Moreover, \eqref{esttimeder1} and \eqref{esttimeder2} provide
\[
\int_{\bar t}^{t}\int_{\R^{N}}|u_{t}(x,\tau)|^{2}dx\,d\tau\leq C\frac{\|u({\bar t})\|_{2}^{2}}{t}.
\]
Then we have
\begin{equation}\label{lambda2}
\sum_{i=1}^{N}\int_{{\bar t}}^{t}\int_{\R^{N}}|\partial_{x_{i}}u_{\lambda}|^{ p}\,dx\,d\tau\leq
C\lambda^{\alpha}\|u(\cdot,\lambda {\bar t})\|_{2}^{2}=C\|u_{\lambda}(\cdot,{\bar t})\|_{2}^{2}
\end{equation}
and since $\|u_{\lambda}(\cdot,{\bar t})\|_{2}$ is equibounded, we have that $\partial_{x_{i}}u_{\lambda}$ are equibounded in $L^{p}_{x,t}$ for $i=1,\cdots,N$, $t\geq {\bar t}$. Moreover,
we have the following estimate of the time derivatives:
\begin{align}
\int_{{\bar t}}^{t}\int_{\ren}|\partial_{t}u_{\lambda}(x,\tau)|^{2}dx\,d\tau&=\lambda^{\alpha+1}\int_{\lambda {\bar t}}^{\lambda t}\int_{\R^{N}}|\partial_{t} u(x,\tau)|^{2}dx\,d\tau\leq C\lambda^{\alpha}\frac{\|u(\cdot,\lambda {\bar t})\|_{2}^{2}}{{\bar t}}\nonumber\\
&=\frac{C}{{\bar t}} \|u_{\lambda}(\cdot,\lambda {\bar t})\|_{2}^{2}, \label{lambda3}
\end{align}
and this gives weak compactness of the time derivatives $\partial_{t} u_{\lambda}$ in $L^{2}_{x,t}$ for $t\geq {\bar t}$. Then estimates \eqref{lambda1} \eqref{lambda2} and \eqref{lambda3} imply, for $t\geq {\bar t}$: $u_{\lambda}\in L^{\infty}_{x,t}$, $\partial_{x_{i}}u_{\lambda}\in L^{p}_{x,t}$ for every $i$, $\partial_{t}u_{\lambda}\in L^{2}_{x,t}$ with uniform bounds w.r. to $\lambda$. Then Rellich-Kondrachov Theorem allows to say that the family $u_{\lambda}$ is relatively locally compact in $L^{1}_{x,t}$. Therefore, up to subsequences, we have
\begin{equation}
\lim_{\lambda\rightarrow\infty}u_{\lambda}(x,t)=U(x,t)
\end{equation}
for some finite-mass function $U(x,t)\ge 0$, and the convergence holds in
$L^{1}_{loc}(Q)$. Then arguing as in \cite[Lemma 18.3]{Vlibro}, it is easy to show that $U$ is a \emph{weak} solution to \eqref{APL}, in the sense
that
\[
\int_{t_{1}}^{t_{2}}\int_{\R^{N}}U\,\varphi_{t}dx\,dt
-\sum_{i=1}^{N}\int_{t_{1}}^{t_{2}}\int_{\R^{N}}|\partial_{x_i}U|^{p-2}\partial_{x_i}U\partial_{x_{i}}\varphi dx\,dt=0
\]
for all the test functions $\varphi\in C_{0}^{\infty}(\R^{N}\times (0,\infty))$.

\smallskip

(ii) Assuming that $u_{0}$ is bounded and compactly supported in a ball $B_{R}$, we argue as in \cite[Theorem 18.1]{Vlibro}.  We take a larger mass $M^{\prime}>M$ and the self-similar solution $B_{M^{\prime}}(x,t)$ such
that $B_{M^{\prime}}(x,1)\geq u_{0}(x)$. Then we clearly have
\[
u_{\lambda}(x,0)=\lambda^{\alpha}u(\lambda^{\frac{\alpha}{N}}x,0)\leq
\lambda^{\alpha} B_{M^{\prime}}(\lambda^{\frac{\alpha}{N}}x,1)=B_{M^{\prime}}\left(x,\frac{1}{\lambda}\right).
\]
Then the comparison principle gives
\begin{equation}\label{rescaledcompar}
u_{\lambda}(x,t)\leq B_{M^{\prime}}\left(x,t+\frac{1}{\lambda}\right).
\end{equation}
Since $u_{\lambda}\rightarrow U$ a.e. and $B_{M^{\prime}}\left(x,t+\frac{1}{\lambda}\right)\rightarrow  B_{M^{\prime}}(x,t)$  as $\lambda\rightarrow\infty$, the mass invariance of $B_{M^{\prime}}$ and \eqref{rescaledcompar} allows to apply Lebesgue dominated convergence Theorem and obtain (up to subsequence)
\[
u_{\lambda}(t)\rightarrow U(t)\quad \text{in}\, L^{1}(\R^{N})
\]
which means that the mass of $U$ is equal to $M$ at any positive time $t$. This gives that $U$ is a fundamental solution with initial mass $M$, it is bounded for all $t>0$ and the usual estimates apply.
Moreover, observe that the rescaled sequence $u_{\lambda}$ have initial data supported in a sequence of shrinking balls $B_{R/\lambda^{\frac{\alpha}{N}}}(0)$. The usual application of the Aleksandrov Principle implies that $U(x,t)$ will have the properties of monotonicity along coordinate directions and also the property of symmetry with respect
to coordinate hyperplanes. For more details, see \cite[Theorem 3]{KamVaz}. Then the uniqueness Theorem \eqref{thm.uniq.fundamental solution} applies and we have $U=B_{M}$. Actually we have that any subsequence of $u_{\lambda}(t)$ converges in $L^{1}(\ren)$ to $B_{M}(t)$, thus the whole family of rescaled solutions $u_{\lambda}(t)$ converges to $B_{M}(t)$ in $L^{1}(\ren)$.\\ In particular we have $u_{\lambda}(x,1)\rightarrow B_{M}(x,1)=F(x)$ in $L^{1}(\R^{N})$  with $F$ defined in \eqref{Fp<2}\nlc, which gives formula \eqref{conv.l1}.  The general case $u_{0}\in L^{1}(\R^{N})$ can be done by following the arguments in \cite[Theorem 18.1]{Vlibro}.

\smallskip

(iv) Now we pass to achieve the uniform convergence \eqref{conv.linfty}. First of all, the equiboundedness of the family $u_{\lambda}$ and the Lipschitz estimates \eqref{LIPSPACE} given by Theorem \ref{localLipcont} allow the use of the Ascoli-Arzel\'a Theorem, in order to obtain
\[
u_{\lambda}\rightarrow B_{M}
\]
uniformly on compact sets of $Q=\R^{N}\times (0,\infty)$. In order to obtain the full convergence in $\R^{N}$ at time $t=1$ we need a tail analysis at infinity and we argue as in \cite[Theorem 18.1]{Vlibro}. Take any $\varepsilon>0$, then the very definition of the rescaled solutions $u_{\lambda}$ gives, for $\lambda>1$ and $R>1$,
\begin{align*}
\int_{|x|>R/2}u_{\lambda}(x,1)dx&=\int_{|x|>R/2}\left[u_{\lambda}(x,1)-F(x)\right]dx+\int_{|x|>R/2}F(x)dx\\
&\leq \int_{\R^{N}}\left[u(y,\lambda)-B_{M}(y,\lambda)\right]dx+\int_{|x|>R/2}F(x)dx
\end{align*}
Now, \eqref{conv.l1} allows to select a sufficiently large $\lambda$ such
that
\[
\int_{\R^{N}}\left|u(y,\lambda)-B_{M}(y,\lambda)\right|dy<\frac{\varepsilon}{2},
\]
Then choosing a large $R>>1$ such that
\[
\int_{|x|>R/2}F(x)dx<\frac{\varepsilon}{2}
\]
we have for $\lambda$ large
\begin{equation}
\int_{| x|>R/2}u_{\lambda}(x,1)dx<\varepsilon.\label{cola1}
\end{equation}
Let us take any $x_{0}$ such that $|x_0|>R$, so that $B_{R/2}(x_{0})\subset\left\{|x|>R/2\right\}$. From the Gagliardo-Nirenberg inequality on bounded domains (see \emph{e.g.} \cite{Nire} or \cite{Gagliardo}) we have
\[
\|u_{\lambda}(\cdot,1)\|_{L^{\infty}( B_{R/2}(x_{0}))}\leq C_{1}\|u_{\lambda}(\cdot,1)\|_{L^{1}( B_{R/2}(x_{0}))}^{\widetilde{\alpha}}
\|\nabla u_{\lambda}(\cdot,1)\|^{1-\widetilde{\alpha}}_{L^{\infty}( B_{R/2}(x_{0}))}+C_{2}\|\|u_{\lambda}(\cdot,1)\|_{L^{1}( B_{R/2}(x_{0}))},
\]
where $\widetilde{\alpha}=1/(N+1)$ and $C_{i}$, $i=1,2$ are constants depending on $N$, $x_{0}$ and $R$.
Then, by \eqref{cola1} and  the uniform bound of the gradient \eqref{LIPSPACE2} we have, for $\lambda$ large,
\[
\|u_{\lambda}(x,1)\|_{L^{\infty}(B_{R/2}(x_{0}))}\leq C\varepsilon^{\alpha},
\]
therefore for all $x_{0}$ such that $|x_{0}|>R$,
\[
u_{\lambda}(x_{0},1)\leq C\varepsilon^{\alpha}.
\]
Thus the uniform convergence on compact sets implies that $u_{\lambda}(x,1)\rightarrow F(x)$ uniformly on $\R^{N}$, as $\lambda\rightarrow\infty$,
which easily translates to \eqref{conv.linfty}.
\end{proof}
%
\section{Complements on the theory}\label{sec.comp}

\subsection{A comparison theorem}

  First we prove a comparison for solutions to a Cauchy-Dirichlet problem
associated to equation \eqref{APL} posed on a domain $U$, where $U$ can be bounded or unbounded (in the latter case we will consider $U$ either as
an outer domain (\emph{i.e.} the complement of a bounded domain) or a half space.
Let us consider the following Cauchy-Dirichlet problem
\begin{equation}\label{APL U}
\left\{
\begin{array}
[c]{ll}%
u_t=\sum_{i=1}^N\left(|u_{x_i}|^{p_i-2}u_{x_i}\right)_{x_i} & \text{ in } U\times [0,\infty)\\
& \\
u(x,t)=h(x,t)\geq0& \text{ in } \partial U\times [0,\infty)\\
& \\
u(x,0)=u_0(x)\geq 0& \text{ in } U,
\end{array}
\right.
\end{equation}
where in general we  take $u_0\in L^1(U)$ and $h\in C(\partial U\times [0,\infty))$.
\begin{proposition} \label{Prop ComU}
Suppose that $u_{1}$ and $u_{2}$ are two positive smooth solutions of \eqref{APL U} with initial data $u_{0,1},\,u_{0,2}\in L^1(U)$ and boundary data $h_1\leq h_2$ on $\partial U\times [0,\infty)$.  Then we have
\begin{equation}\label{contractprincU}
\int_{U} (u_1(t)-u_2(t))_+\,dx\le \int_{U} (u_{0,1}-u_{0,2})_+\,dx\,.
\end{equation}
In particular, if $u_{0,1}\le u_{0,2}$ for a.e.  $x\in U$, then for every
$t>0$ we have $u_1(t)\le u_2(t)$ a.e. in $U$.
\end{proposition}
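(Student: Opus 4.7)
The plan is to adapt the standard monotonicity/$L^1$-contraction argument (as already used in the proofs of Theorem \ref{ellipticexi} and Theorem \ref{parabolicexistence}) to the Cauchy--Dirichlet setting, paying close attention to the boundary term on $\partial U$, which is precisely where the hypothesis $h_1\le h_2$ enters.

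First I would form the difference of the two parabolic equations and test against $p(u_1-u_2)\,\zeta_n(x)$, where $p\colon\mathbb R\to[0,1]$ is a smooth nondecreasing approximation of $\mathrm{sign}^+$ vanishing on $(-\infty,0]$, and $\zeta_n$ is the usual cutoff (needed only if $U$ is unbounded; it can be dropped when $U$ is bounded). Integration over $U$ and integration by parts in each coordinate direction yields
\begin{equation*}
\frac{d}{dt}\int_U P(u_1-u_2)\,\zeta_n\,dx
= -\sum_{i=1}^N\int_U \bigl(|\partial_{x_i}u_1|^{p_i-2}\partial_{x_i}u_1-|\partial_{x_i}u_2|^{p_i-2}\partial_{x_i}u_2\bigr)\,\partial_{x_i}(u_1-u_2)\,p'(u_1-u_2)\,\zeta_n\,dx + R_n + B,
\end{equation*}
where $P'=p$, $R_n$ collects the terms where the derivative falls on $\zeta_n$, and $B$ is the boundary integral on $\partial U$.

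The key steps are then: (1) observe that the interior bulk term is non-positive, since $(|a|^{p_i-2}a-|b|^{p_i-2}b)(a-b)\ge 0$ and $p'\ge 0$; (2) show that $B$ vanishes after letting $p\to\mathrm{sign}^+$: indeed on $\partial U$ we have $u_1-u_2=h_1-h_2\le 0$, so $p(u_1-u_2)\to 0$ and the boundary flux carries a vanishing weight; (3) handle the remainder $R_n$ by the same H\"older/Marcinkiewicz argument used in the proof of Theorem \ref{ellipticexi}, exploiting that $|\partial_{x_i}u_j|^{p_i-2}\partial_{x_i}u_j\in L^{p_i'}$ locally and $\|\partial_{x_i}\zeta_n\|_\infty=O(1/n)$; in the bounded case $R_n$ is absent. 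Integrating in time on $[0,t]$ and then letting $p\to\mathrm{sign}^+$ (so $P\to(\cdot)_+$) and $n\to\infty$ one obtains
\begin{equation*}
\int_U (u_1(t)-u_2(t))_+\,dx\le \int_U (u_{0,1}-u_{0,2})_+\,dx,
\end{equation*}
which is \eqref{contractprincU}. The comparison statement $u_1(t)\le u_2(t)$ a.e. is the immediate corollary obtained by taking $u_{0,1}\le u_{0,2}$ so that the right-hand side vanishes.

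The main obstacle I anticipate is the rigorous justification of the boundary term $B\to 0$ when $U$ is unbounded (an outer domain or a half-space): one needs to combine the condition $h_1\le h_2$ on $\partial U$ with suitable decay of $u_j$ and its flux at infinity so that no contribution at the ``boundary at infinity'' survives. This is where the assumption that $u_j$ are smooth with $u_{0,j}\in L^1(U)$ is used, together with the cutoff $\zeta_n$ and the Marcinkiewicz estimates on $\partial_{x_i} u_j$ from Theorem \ref{ellipticexi}; the argument is entirely parallel to the elliptic case, with time fixed. Once this is in place, the rest of the proof is routine.
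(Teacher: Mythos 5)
Your proposal is correct and follows essentially the same route as the paper's proof: test with the approximate sign function times a cutoff, integrate by parts, drop the nonpositive bulk term via monotonicity of the anisotropic $p$-Laplacian, kill the boundary integral using $h_1\le h_2$ (so $(u_1-u_2)_+$, and hence the test weight, vanishes on $\partial U$), and dispose of the cutoff remainder exactly as in Theorem~\ref{ellipticexi}. One small simplification you could note: since $p$ vanishes on $(-\infty,0]$, the boundary term is already identically zero for each approximation, before the limit $p\to\mathrm{sign}^+$ is taken.
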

\begin{proof} We point out that the boundary conditions of  $u_{1}, u_{2}$  on $\partial U$ implies in particular that $u_{1}\leq u_{2}$ on $\partial U$ implies in particular $(u_{1}-u_{2})_{+}=0$ on $\partial U$. We follow the lines of the proof of \eqref{elliptL1_contr} in Theorem \ref{ellipticexi}. Indeed using the same test function we find
\begin{equation*}
\begin{split}
\frac{d}{dt}\int_{U}&(u_{1}(t)-u_{2}(t))^{+}\,\zeta_{n}(x)dx
\\
&
=\sum_{i=1}^N\int_{U}\partial_{x_{i}}
\left(|\partial_{x_{i}}u_1|^{p_i-2}\partial_{x_{i}}u_1-|\partial_{x_{i}}u_2|^{p_i-2}\partial_{x_{i}}u_2\right)
(u_{1}-u_{2})_{+}\,\zeta_{n}(x)dx\\
&=-\sum_{i=1}^N
\int_{U}
\left(|\partial_{x_{i}}u_1|^{p_i-2}\partial_{x_{i}}u_1-|\partial_{x_{i}}u_2|^{p_i-2}\partial_{x_{i}}u_2\right)
(u_{1}-u_{2})_{+}\,\partial_{x_{i}}\zeta_{n}(x)dx
\\
&+\sum_{i=1}^N\int_{\partial{U}} \left(|\partial_{x_{i}}u_1|^{p_i-2}\partial_{x_{i}}u_1-|\partial_{x_{i}}u_2|^{p_i-2}\partial_{x_{i}}u_2\right)
(u_{1}-u_{2})_{+}\zeta_{n}(x)\nu_i\, d \sigma
\\
&
={-}\sum_{i=1}^N\int_{U}
\left(|\partial_{x_{i}}u_1|^{p_i-2}\partial_{x_{i}}u_1-|\partial_{x_{i}}u_2|^{p_i-2}\partial_{x_{i}}u_2\right)
(u_{1}-u_{2})_{+}\,\partial_{x_{i}}\zeta_{n}(x)dx.
\end{split}
\end{equation*}
From now on we argue as in i) in the proof of Theorem \ref{ellipticexi}.
\end{proof}

\subsection{Aleksandrov's reflection principle}
This is an auxiliary section used in the proof of Aleksandrov's principle
so we will skip unneeded generality.
Let $H_j^+=\{x\in \mathbb{R}^N:x_j>0\}$  be the positive half-space with respect to the $x_j$ coordinate for any fixed $j\in\{1,\cdots,N\}$.  For any $j=1,\cdots,N$  the hyperplane $H_j=\{x_j=0\}$  divides $\mathbb{R}^N$ into two half spaces $H_j^+=\{x_j>0\}$ and $H_j^-=\{x_j<0\}$. We denote by $\pi_{H_j}$ the specular symmetry that maps a point $x\in
H_j^+$  into $\pi_{H_j}(x)\in H_j^-$, its symmetric image with respect to
$H_j$. We have the following important result:

\begin{proposition}\label{Prop Al}
Let $u$ a positive solution of the Cauchy problem for \eqref{APL} with
positive initial data $u_0\in L^1(\mathbb{R}^N)$.
If  for a given hyperplane $H_j$ with  $j=1,\cdots,N$ we have
$$u_0(\pi_{H_j}(x))\leq u_0(x)\, \text{ for { a.e.} }x\in H{_j}^{+}$$
then for all $t$
$$u(\pi_{H_j}(x),t)\leq u(x,t)\quad \text{ for { a.e.} }(x,t)\in H_{j}^{+} \times  (0,\infty).$$
\end{proposition}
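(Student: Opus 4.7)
The plan is to reduce the statement to the Cauchy-Dirichlet comparison principle of Proposition \ref{Prop ComU}, applied on the half-space $U = H_j^+$, by using the reflection symmetry of the operator.

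First, I would introduce the reflected function $\tilde u(x,t) := u(\pi_{H_j}(x), t)$. The key observation is that the anisotropic operator $\sum_i (|u_{x_i}|^{p_i-2} u_{x_i})_{x_i}$ is invariant under the map $x_j \mapsto -x_j$, since this reflection changes the sign of $u_{x_j}$ but the factor $|u_{x_j}|^{p_j-2} u_{x_j}$ is odd in $u_{x_j}$, and the outer derivative $(\cdot)_{x_j}$ supplies another sign change, making the $j$-th term invariant; the other terms involve only derivatives in directions $x_i$ with $i \neq j$ and are untouched. Hence $\tilde u$ solves the same anisotropic $p$-Laplacian equation on $\mathbb{R}^N \times (0, \infty)$, and in particular on $H_j^+ \times (0, \infty)$.

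Next, I would set up the two ingredients needed by Proposition \ref{Prop ComU} on $U = H_j^+$. For the initial data, the hypothesis reads exactly $\tilde u(x,0) = u_0(\pi_{H_j}(x)) \leq u_0(x) = u(x,0)$ for a.e.\ $x \in H_j^+$. For the boundary values on $\partial H_j^+ = H_j$, since $\pi_{H_j}$ fixes $H_j$ pointwise, we have $\tilde u(x,t) = u(\pi_{H_j}(x),t) = u(x,t)$ for all $x \in H_j$ and $t \geq 0$, so the boundary datum of $\tilde u$ is pointwise equal to (hence $\leq$) the boundary trace of $u$. Thus $\tilde u$ and $u$ are two nonnegative solutions of \eqref{APL U} on $U = H_j^+$ with ordered initial data and ordered (in fact equal) boundary data. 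Applying Proposition \ref{Prop ComU} yields $\tilde u(x,t) \leq u(x,t)$ for a.e.\ $(x,t) \in H_j^+ \times (0, \infty)$, which is exactly the claimed inequality.

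The main obstacle will be technical rather than conceptual: Proposition \ref{Prop ComU} is stated for smooth solutions in a domain and requires the difference to have enough integrability to justify the test-function computation (in particular the cutoff argument with $\zeta_n$), while here we only know that $u$ is the mild $L^1$ semigroup solution. I would handle this by approximation: regularize $u_0$ by a bounded, smooth sequence $u_{0,\varepsilon}$ preserving the reflection inequality $u_{0,\varepsilon}(\pi_{H_j}(x)) \leq u_{0,\varepsilon}(x)$ on $H_j^+$ (for instance by a convolution with a kernel symmetric in $x_j$ together with a standard SSNI-preserving truncation), prove the reflection inequality for the corresponding smoother solutions $u_\varepsilon$ by the argument above, and then pass to the limit using the $L^1$-contraction \eqref{L1_contr} of Theorem \ref{parabolicexistence}, which guarantees $u_\varepsilon \to u$ in $C([0,T]; L^1(\mathbb{R}^N))$ and hence a.e.\ convergence on both sides of the desired inequality along a subsequence. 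Finally one remarks that $u_0 \geq 0$ combined with the comparison principle keeps all approximants nonnegative, so the hypotheses of Proposition \ref{Prop ComU} are preserved throughout the approximation.
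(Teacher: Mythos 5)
Your proof is correct and follows exactly the route the paper indicates: reduce the statement to the half-space comparison result of Proposition \ref{Prop ComU} by observing that the reflection $x_j\mapsto -x_j$ leaves the anisotropic operator invariant, so the reflected function $\tilde u$ is a solution on $H_j^+$ with matching boundary data on $H_j$ and ordered initial data. The approximation step to pass from smooth to mild $L^1$ solutions is a reasonable addition, consistent with how the authors handle such technicalities elsewhere.
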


\begin{proposition}\label{Prop 3}
Let $u$ be a positive solution of the Cauchy problem for \eqref{APL} with
nonnegative
initial data $u_0\in L^1(\mathbb{R}^N)$. If $u_0$ is a  symmetric function in each variable $x_i$,  and also a decreasing function in $|x_i|$ for all $i$ a.e., then $u(x,t)$  is also symmetric and a nonincreasing function in $|x_i|$ for all $i$ for all $t$ a.e. in $x$ (for short  SSNI, meaning separately symmetric and nonincreasing).
\end{proposition}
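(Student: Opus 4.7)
The plan is to derive the SSNI property of $u(\cdot,t)$ from Aleksandrov's reflection principle, Proposition \ref{Prop Al}, applied not only to each coordinate hyperplane through the origin but also to every parallel shifted hyperplane. The key preliminary observation I would record is that equation \eqref{APL} is invariant under translation in each coordinate direction, so for every $a\in\mathbb{R}$ and $j\in\{1,\ldots,N\}$ the translated function $\tilde u(x,t):=u(x+ae_j,t)$ is again a solution of the Cauchy problem, with initial datum $u_0(\cdot+ae_j)$. This immediately upgrades Proposition \ref{Prop Al} to a reflection principle across the shifted hyperplane $H_{j,a}:=\{x_j=a\}$, where we set $H_{j,a}^+:=\{x_j<a\}$ and $\pi_{H_{j,a}}(x)=x+(2a-2x_j)e_j$.

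With this tool, I would first establish the separate symmetry of $u(\cdot,t)$ in each variable $x_j$. Since $u_0$ is symmetric with respect to $x_j$, one has $u_0(\pi_{H_j}(x))=u_0(x)$ a.e., so both the inequality and its reverse hold on $H_j^+$. Applying Proposition \ref{Prop Al} in both directions yields $u(\pi_{H_j}(x),t)=u(x,t)$ for a.e. $(x,t)\in H_j^+\times(0,\infty)$, which is the required symmetry.

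Next I would prove monotonicity in $|x_j|$. Fix $a>0$ and take $x\in H_{j,a}^+$, so $x_j<a$; a direct check shows $|x_j|\le|2a-x_j|$ regardless of the sign of $x_j$, so the hypothesis that $u_0$ is nonincreasing in $|x_j|$ forces $u_0(\pi_{H_{j,a}}(x))\le u_0(x)$ a.e. The shifted reflection principle then gives $u(\pi_{H_{j,a}}(x),t)\le u(x,t)$. Varying $a>0$ and combining with the symmetry from the previous step, I would conclude that $u((\ldots,y_j,\ldots),t)\le u((\ldots,x_j,\ldots),t)$ whenever $0\le x_j\le y_j$ with all other coordinates fixed, which is exactly the nonincreasing property in $|x_j|$. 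Iterating over $j=1,\ldots,N$ completes the proof.

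The only delicate point is the justification of Proposition \ref{Prop Al} for shifted hyperplanes, but this is a clean consequence of translation invariance: reflecting a solution across any hyperplane parallel to a coordinate axis produces another solution of \eqref{APL}, so the comparison argument behind Proposition \ref{Prop Al} transfers verbatim. I do not anticipate any deeper obstacle, provided one handles the a.e. qualifications with care (which is automatic, since every $\pi_{H_{j,a}}$ is a measure-preserving bijection).
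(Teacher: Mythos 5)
Your proposal is correct and follows the same route the paper takes: the paper simply declares that Proposition \ref{Prop Al} (Aleksandrov's reflection principle, itself a consequence of Proposition \ref{Prop ComU}) yields Proposition \ref{Prop 3}, citing \cite{FVV2020} for details, and your argument is precisely that deduction made explicit — separate symmetry from applying the reflection principle across $\{x_j=0\}$ in both directions, and monotonicity in $|x_j|$ from the translation/reflection-invariance of the equation applied to the shifted hyperplanes $\{x_j=a\}$, $a>0$. The only micro-detail worth recording is that passing from the shifted hyperplane with your orientation $H_{j,a}^+=\{x_j<a\}$ back to the paper's $H_j^+=\{x_j>0\}$ uses reflection invariance in addition to translation invariance, which you do implicitly acknowledge.
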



In order to prove the previous two propositions we can argue as in \cite{FVV2020}. In particular Proposition \ref{Prop Al} is a consequence to Proposition \ref{Prop ComU} and yields Proposition \ref{Prop 3}.

\section{Control on the anisotropy}\label{sec.aniso}

\begin{figure}[t!]
 \centering
 \vspace{-3.0cm}
 \includegraphics[scale=0.7]{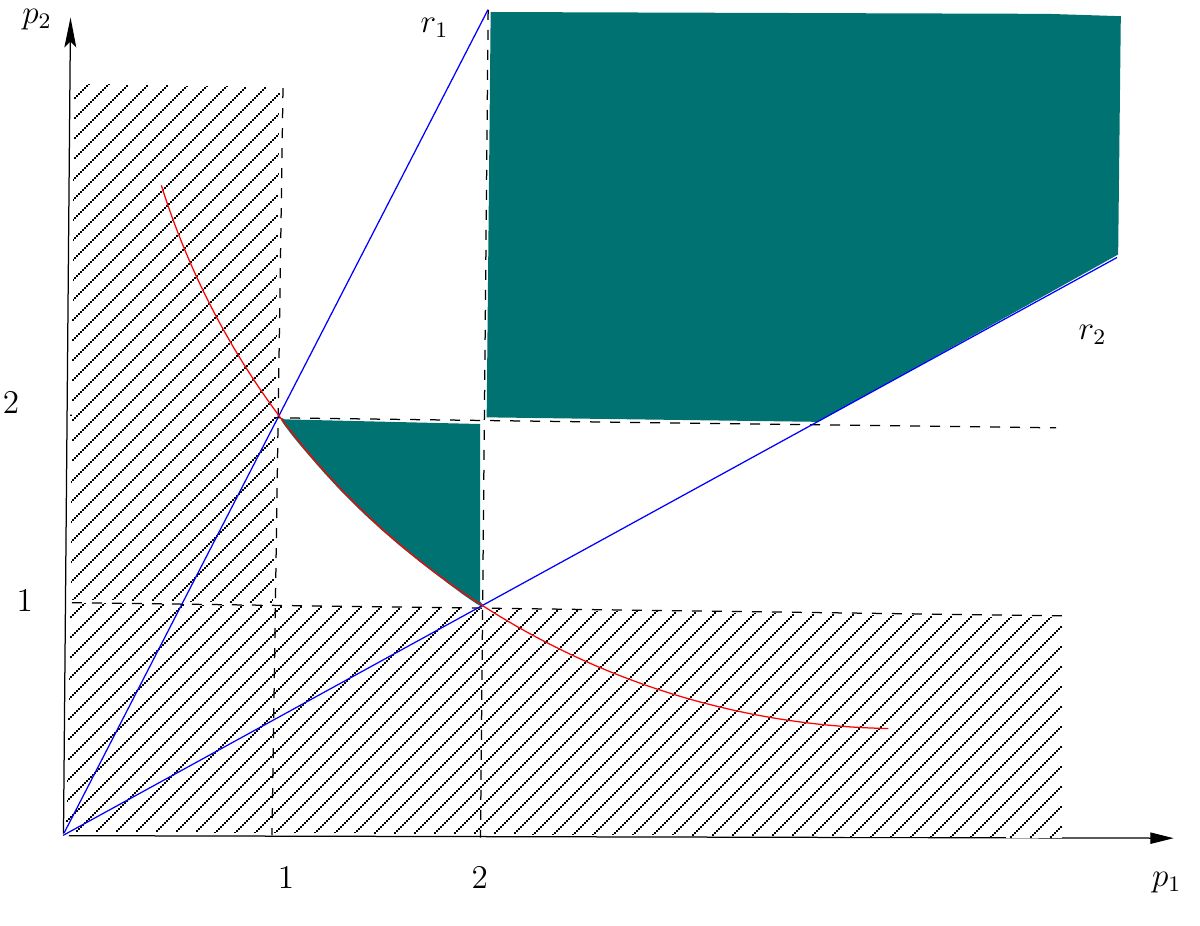}
 \caption{$p_1,p_2$ that verifies conditions (H2)-(H3) when $p_1,p_2\leq2$ or $p_1,p_2\geq2$}
  \label{fig:1}
\end{figure}

\noindent $\bullet$ In our analysis of existence of self-similar solution
for equation (APLE) we have found conditions (H2) and (H3). It is interesting to examine what these requirements mean for $N=2$ and $p_1,p_2>1$.
Then,
 condition (H2) means
$$
\frac{p_1p_2}{p_1+p_2} >\frac{2}{3}\quad \text{ i.e. } (p_1-2/3)(p_2-2/3)>4/9.
$$
The region is limited below in Figure  \ref{fig:1} by a symmetric hyperbola in which passes through the points $(2,1)$, $(4/3,4/3)$, and $(1,2)$. As for condition (H3), we have
$$
p_i<3/2\overline{p}= 3p_1p_2/(p_1+p_2),
$$
which amounts to $p_1<2p_2$ (delimited by line $r_2$ in the Figure \ref{fig:1})  and symmetrically $p_2<2p_1$ (delimited by line $r_1$). We thus get a necessary ``small anisotropy condition'' which takes the form
$$
\frac{1}2 <\frac{p_1}{p_2}<2
$$
and it is automatically satisfied for fast diffusion $1<p_1,p_2<2$.

\noindent $\bullet$ The analysis of the (APME) in \cite{FVV2020} leads to
a simpler algebra. According to the results of the paper, condition (H2) becomes
$$
\frac1{N}\sum_i m_i> \frac{N-2}{N},
$$
which in dimension $N=2$ reads $m_1+m_2>0$. For $N\ge 3$ we get $m_1+m_2+\cdots m_N>N-2$. This is much simpler than the (APLE) condition. Otherwise the anisotropy control (H3) reads
$$
m_i< \overline{m}+\frac2{N}
$$
which for $N=2$ means 
$$
|m_1-m_2|<2.
$$
This is automatically satisfied for fast diffusion $0<m_1,m_2<1$, but is important when slow diffusion occurs in some coordinate direction.

\section{Self-similarity for Anisotropic Doubly Nonlinear Equations }\label{sec.dne}

We have studied two types of anisotropic evolution equations: the anisotropic  equation of Porous Medium type (APME) treated in \cite{FVV2020}, and the model (APLE) involving anisotropic $p$-Laplacian type \eqref{APL}, studied here above. The similarities lead to consider a more general evolution equation with anisotropic nonlinearities involving powers of both the solution and its spatial derivatives
\begin{equation}\label{APL:dnl}
u_t=\sum_{i=1}^N\left(\left|(u^{m_i})_{x_i}\right|^{p_i-2}(u^{m_i})_{x_i}\right)_{x_i}.
\end{equation}
We will call it (ADNLE). We assume that $m_i>0$ and $p_i>1$. The isotropic case is well known, see Section $11$ of  \cite{VazSmooth}. We describe next the self-similarity analysis applied to solutions plus the physical requirement of finite conserved mass.

\medskip

\noindent $\bullet$ The type of self-similar solutions of equation \eqref{APL} has again the usual form
$$
B(x,t)=t^{-\alpha}F(t^{-a_1}x_1,..,t^{-a_N}x_N)
$$
with constants $\alpha>0$, $a_1,..,a_n\ge 0$ to be chosen below. We substitute this formula into equation \eqref{APL:dnl}.
\noindent Note that  writing $y=(y_i)$ with $y_i=x_i \,t^{-a_i}$,
equation \eqref{APL:dnl} becomes
$$
-t^{-\alpha-1}\left[\alpha F(y)+
\sum_{i=1}^{N} a_iy_i\,F_{y_i}
\right]=\sum_{i=1}^{N}t^{-[\alpha m_i(p_i-1)+p_ia_i]}\left(|(F^{m_i})_{y_i}|^{p_i-2}(F^{m_i})_{y_i}\right)_{y_i}.
$$
Time is eliminated as a factor in the resulting equation on the condition
that:
\begin{equation}\label{ab:dnl}
\alpha(m_i(p_i-1)-1)+p_ia_i=1 \quad \mbox{ for all } i=1,2,\cdots,  N.
\end{equation}

\medskip

\noindent $\bullet$ We also look for integrable solutions that will enjoy
the mass conservation property, and this
implies that \ $\alpha=\sum_{i=1}^{N}a_i$. Writing $a_i=\sigma_i \alpha,$ we get the conditions $\sum_{i=1}^{N}\sigma_i=1$ and
$$
\alpha\left[m_i(p_i-1)-1 + p_i\sigma_i\right]=1 \qquad \forall i.
$$
From this set of conditions we can get the unique admissible values of $\alpha$ and $\sigma_i$.
We proceed as follows. From the last displayed  formula we get
\begin{equation}\label{sigmai}
\sigma_i=\frac{1}{p_i}\left(\frac{1}{\alpha}+1-m_i(p_i-1)\right).
\end{equation}
Then  condition $\sum_{i=1}^{N}\sigma_i=1$ implies that
$$
1= \left(\frac{1}{\alpha}+1\right)
\sum_{i=1}^{N}\frac{1}{p_i}-\sum_{i=1}^N m_i +\sum_{i=1}^N \frac{m_i}{p_i}\,.
$$
At this moment we introduce some suitable notations:
$$
\frac{1}{N}\sum_{i=1}^{N}\frac{1}{p_i}={\overline p}, \quad \frac{1}{N}\sum_{i=1}^{N}{m_i}={\overline m}, \quad
\frac{1}{N}\sum_{i=1}^{N}\frac{m_i}{p_i}=\frac{q}{\overline p}\,.
$$
Using them, we get
\begin{equation}\label{alfa:dnl}
\alpha=\frac{N}{N({\overline m}\,\overline{p}-q-1)+\overline{p}}\,.
\end{equation}
We want to work in a parameter range that  ensures that $\alpha>0$, and this means condition
\begin{equation}\label{Cpos:dnl}\tag{DN2}
{\overline p}\,\,{\overline m}+ \frac{\overline p}{N}> q+1,
\end{equation}
which is the equivalence in this setting to condition (H2) in the Introduction. Under this condition the self-similar solution will decay in time in maximum value like a power of time. This is a crucial condition for the self-similar solution to exist and play its role, since the suitable existence theory contains the maximum principle.

\medskip

\noindent $\bullet$
Once $\alpha $ is obtained, the $\sigma_i$ are given by \eqref{sigmai}. These exponents  control the rate of spatial spread in every coordinate direction, we know that \ $\sum_{i=1}^{N}\sigma_i=1$, and in particular
$\sigma_i=1/N$ in the homogeneous case. The condition to ensure that $\sigma_i>0$ is 
\begin{equation}\label{VV}\tag{DN3}
m_i(p_i-1)<\frac{1}{\alpha}+1= {\overline p}\,\,{\overline m}+ \frac{\overline p}{N}- q.
\end{equation}
This means that the self-similar solution expands  as time passes (or at least it does not contract), along any of the coordinate directions.

Note that the simple fast diffusion conditions $m_i<1$ and $p_i<2$ and $\alpha>0$ ensure that $\sigma_i> 0$.

\medskip

\noindent \bf 2. Particular cases\rm. (1) When all the $m_i=1$ we find the results of our present paper contained in Section \ref{sec.sss} for equation (APLE). On the other hand, when $p_i=2$ we find the results of the previous paper \cite{FVV2020} for equation (APME).

(2) It is also interesting to look at cases where the $m_i=m$ but not necessarily 1, and when $p_i=p$ but not necessarily 2. In the first case
$q=m$, while in the second case we get $q=\overline{m}$. In both cases $\alpha$ is given by the simpler formula
$$
\alpha=\frac{N}{N({\overline m} \,(\overline{p}-1)-1)+\overline{p}}\,,
$$
that looks very much like the isotropic case, see the Barenblatt solution
which is explicitly written in Subsection 11.4.2 of  \cite{VazSmooth}.

\medskip

\noindent \bf 3. On the theory\rm.
With these choices,  the profile function $F(y)$ must satisfy the following doubly-nonlinear anisotropic stationary equation in $\mathbb{R}^N$:
\begin{equation*}
\sum_{i=1}^{N}\left[\left(|(F^{m_i})_{y_i}|^{p_i-2}(F^{m_i})_{y_i}\right)_{y_i}+\alpha\sigma_i\left( y_i F\right)_{y_i}\right]=0.
\end{equation*}
Conservation of mass must also hold : $\int B(x,t)\, dx =\int F(y)\, dy=M<\infty $ \ {\rm for} $t>0.$

The next  step would be to prove that there exists a suitable solution of
this elliptic equation, which is the anisotropic version of the equation of the Doubly nonlinear Barenblatt profiles in the standard $m$-$p$-Laplacian.  The solution is indeed explicit in the isotropic case, as we have said.

\section{Comments, extensions and open problems}

\begin{itemize}
\item We may replace the main equation \eqref{APL} by
\begin{equation*}
u_t=\sum_{i=1}^N \left(a_i|u_{x_i}|^{p_i-2}u_{x_i}\right)_{x_i}\quad\text{ in }\quad Q:=\mathbb{R}^N\times(0,+\infty)
\end{equation*}
 with all constants $a_i>0$ and nothing changes in the theory. Inserting the constants may be needed in the applications. The case where the $a_i$
depend on $x$ appears in inhomogeneous media and it is out of our scope. And we did not touch on the theory of equations like \eqref{APL} where the exponents $p(x,t)$ are space-time dependent, see in this respect \cite{AntSh15}.

\item We may replace the main equation \eqref{APL} by
\begin{equation*}
 u_t=\sum_{i=1}^N  (|u_{x_i}|^{p_i-2}u_{x_i})_{x_i}+ \ve \Delta_p(u) \quad \mbox{in  } \ \quad Q:=\mathbb{R}^N\times(0,+\infty).
 \end{equation*}

  At least in the case of homogeneous anisotropy  the same theory will work and we have uniqueness of self-similar solutions, which are also  explicit and we can write them.

\item The cases where some or all of the $p_i$ are larger than 2 are not treated here in any systematic way. Notice that our general theory applies, as well as the symmetrization and boundedness. The upper barrier has to changed into a barrier compatible with the compact support properties. In the orthotropic case, the existence theorem for self-similar Barenblatt solutions obtained in paper \cite{CV2020} can be completed with the proof of uniqueness and the theorem of asymptotic behaviour as in our Section \ref{sec.ortho} above.

\item  The limit cases where some $p_i=2$ deserve attention.

\item  Symmetrization does not give sharp bounds probably when the $p_i$ are not the same,
but it implies the $L^\infty$ bounded where the constant is explicit. Can
we compare our self-similar solutions with the isotropic Barenblatt by symmetrization?

\item If we check the explicit self-similar solutions of the isotropic and orthotropic equations, they are comparable but for a constant.

\item  We have not discussed the Harnack or the H\"older regularity for this theory.

\item  Following the idea of \cite{N} it si possible to prove a strong maximum principle in the \emph{homogeneous case} where all exponents are equal, $p_{1}=...p_{N}=p<2$.

\begin{theorem}
Let $T>0$, $\Omega$ a domain of $\mathbb{R}^N$, $u\in C^0([0,T)\times\Omega)$ satisfying $u_t-L_hu\geq 0$ with $L_h$ defined as \eqref{APLiso}, $p<2$ and data $u_0$ non-identically zero such that $u(\cdot,t)\geq0$ on $\partial\Omega$, $\forall t\geq0$. If there exists some $x\in\Omega$ and  $t>0$ such that $u(x, t)=0$, then $u(\cdot,t)\equiv0$ on $\Omega$.
\end{theorem}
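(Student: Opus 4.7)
I would argue by contradiction via infinite speed of propagation, realized concretely by comparison from below with a compactly supported sub-solution built from the Barenblatt profile. Suppose $u(x^\ast,t^\ast)=0$ at some $(x^\ast,t^\ast)\in\Omega\times(0,T)$ while $u(\cdot,t^\ast)\not\equiv 0$ on $\Omega$. By continuity of $u$, there exist $y_0\in\Omega$, $\rho,\delta>0$ with $\overline{B_\rho(y_0)}\subset\Omega$, and $\epsilon\in(0,t^\ast)$ such that $u\geq\delta$ on $\overline{B_\rho(y_0)}\times[t^\ast-\epsilon,t^\ast]$. Because $\Omega$ is a (connected) domain, $y_0$ and $x^\ast$ can be joined by a polygonal path inside $\Omega$, admitting a finite cover by open balls each compactly contained in $\Omega$; a chaining argument therefore reduces the proof to showing that, on a single time interval (possibly after shrinking $\epsilon$), the positivity of $u$ on $B_\rho(y_0)$ at time $t^\ast-\epsilon$ propagates to every fixed $\overline{B_R(y_0)}\subset\Omega$ at time $t^\ast$.

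For the local spreading step, I exploit the explicit form of the orthotropic Barenblatt profile \eqref{Fp<2}. Since $F$ is strictly positive on $\mathbb{R}^N$, the function $B(x,t)=t^{-\alpha}F(t^{-\alpha/N}x)$ is a classical positive solution of $u_t=L_h u$ for $t>0$. As $L_h$ depends only on $\nabla u$, subtracting a constant preserves the equation, so for parameters $\eta,\tau>0$ and $k\geq 0$ the function
\[
w(x,t):=\Bigl(\eta(t-t^\ast+\epsilon+\tau)^{-\alpha}F\!\bigl((t-t^\ast+\epsilon+\tau)^{-\alpha/N}(x-y_0)\bigr)-k\Bigr)_+
\]
is a weak sub-solution of $u_t=L_h u$ with compact $x$-support for every $t$. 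The sub-solution property across the free boundary $\{\eta B=k\}$ is obtained by standard regularization: approximate $(\cdot-k)_+$ by $(\cdot-k-\nu)_+$ and let $\nu\to 0^+$.

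Tuning of parameters: set $k:=\eta\tau^{-\alpha}F(\tau^{-\alpha/N}\rho e)$ for some coordinate unit vector $e$, so that $\mathrm{supp}\,w(\cdot,t^\ast-\epsilon)\subset\overline{B_\rho(y_0)}$ (using the separate decrease of $F$ in each variable), then pick $\eta$ small enough that $w(\cdot,t^\ast-\epsilon)\leq\delta$ on $B_\rho(y_0)$. Using the algebraic tail $F(z)\sim|z|^{-p/(2-p)}$ at infinity read off from \eqref{Fp<2}, an elementary computation shows that the ratio $k(\epsilon+\tau)^\alpha/\eta$ scales like $\tau^\theta$ with a strictly positive exponent $\theta$ precisely when $p>p_c$. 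Consequently $\{w(\cdot,t^\ast)>0\}$ contains any prescribed compact subset of $\Omega$ provided $\tau$ is chosen small enough, while keeping $\mathrm{supp}\,w(\cdot,t)$ inside $\Omega$ throughout $[t^\ast-\epsilon,t^\ast]$ is the role of the chaining reduction described above.

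At each link of the chain, Proposition \ref{Prop ComU} applied in its sub/super-solution form (with $w(\cdot,t^\ast-\epsilon)\leq u(\cdot,t^\ast-\epsilon)$ by construction and $w=0\leq u$ on $\partial\Omega$ since $\mathrm{supp}\,w(\cdot,t)\subset\Omega$) yields $u\geq w$ on $\Omega\times[t^\ast-\epsilon,t^\ast]$; iterating along the chain gives $u(x^\ast,t^\ast)>0$, contradicting $u(x^\ast,t^\ast)=0$. The main obstacle is the geometric compatibility between the support of the spreading sub-solution and the arbitrary shape of $\Omega$, which is precisely why the connectedness of $\Omega$ and the chaining argument are essential; the orthotropic assumption supplies the explicit $F$ needed for the tail estimates, and the range $p>p_c$ ensures the positive scaling exponent $\theta$.
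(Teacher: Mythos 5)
There is a genuine gap in your construction of the sub-solution. You write that because $L_h$ depends only on $\nabla u$, "subtracting a constant preserves the equation," and then you treat $\eta B - k$ (truncated) as a sub-solution. Subtracting a constant is indeed harmless, but multiplying by $\eta$ is not: $L_h$ is positively homogeneous of degree $p-1$, not degree $1$. If $B_t = L_h B$, then
\[
(\eta B)_t - L_h(\eta B) \;=\; \eta B_t - \eta^{p-1} L_h B \;=\; (\eta - \eta^{p-1})\,B_t .
\]
Since you pick $\eta$ small (so that $w \le \delta$ on $B_\rho(y_0)$) and $p<2$, the factor $\eta - \eta^{p-1}$ is strictly negative. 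Near the peak of the Barenblatt profile, $B_t < 0$, so $(\eta B)_t - L_h(\eta B) > 0$ there — $\eta B$ is a \emph{super}-solution, not a sub-solution, precisely in the region $\{\eta B > k\}$ that survives the truncation. The Kato-type argument you invoke for $(\cdot - k)_+$ therefore does not rescue the construction. The way to repair it is to use the correct amplitude–space scaling for \eqref{APLiso}, namely $u \mapsto k\,u(k^{(2-p)/p}\,\cdot\,,t)$, which maps solutions to solutions; combining this with the temporal shift $\tau$ and the cut-off $(\cdot-k)_+$ gives a genuinely compactly-supported sub-solution, after which the tail computation and the chaining must be redone with the new parameters. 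Note also that your positivity of the scaling exponent requires $p>p_c$, whereas the theorem is stated for all $p<2$, so the subcritical range would still need a separate argument.

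Beyond the gap, your route is genuinely different from the paper's. The paper does not spell out a proof at all — it defers to Nazaret \cite{N}, whose strong maximum principle for quasilinear operators is obtained by a Hopf-type boundary-point lemma with an explicit local barrier, a purely local comparison argument that does not rely on any global self-similar solution or on the restriction $p>p_c$. Your propagation-of-positivity strategy via compactly supported Barenblatt sub-solutions plus chaining is a legitimate alternative mechanism for strong maximum principles in the fast-diffusion range and is more in the spirit of the rest of the paper's toolkit (Proposition \ref{Prop ComU}, self-similar barriers), but it would also require upgrading Proposition \ref{Prop ComU} to a sub/super-solution comparison statement, which the paper only states for solutions.
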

\end{itemize}

\vskip-1cm
\section*{Acknowledgments}
J. L. V\'azquez was funded by  grant PGC2018-098440-B-I00 from MICINN (the Spanish Government). J.~L.~V\'azquez is an Honorary Professor at Univ. \ Complutense de Madrid. The authors wish to warmly thank L. Brasco for fruitful discussions and valuable suggestions.


\

\noindent {\bf 2020 Mathematics Subject Classification.}
  	35K55,  	
   	35K65,   	
    35A08,   	
    35B40.   	

\medskip

\noindent {\bf Keywords: } {Nonlinear parabolic equations, $p$-Laplace diffusion, anisotropic equation, symmetrization, fundamental solutions,  asymptotic behaviour}.

\end{document}